\documentclass[11pt]{article}

\usepackage[a4paper,margin=28mm]{geometry}
\usepackage[T1]{fontenc}
\usepackage[utf8]{inputenc}
\usepackage{lmodern}
\usepackage{microtype}
\usepackage{amsmath,amssymb,amsthm,mathtools}
\usepackage{mathrsfs}
\usepackage{booktabs}
\usepackage{graphicx}
\usepackage{enumitem}
\usepackage{xcolor}
\usepackage{hyperref}
\usepackage[nameinlink,noabbrev]{cleveref}

\hypersetup{
  colorlinks=true,
  linkcolor=blue!55!black,
  citecolor=blue!55!black,
  urlcolor=blue!55!black,
  pdftitle={A Two-Variable Zeta Function for a Parity-Perturbed Hofstadter Q-Recursion: The Exceptional t=-1 Slice and Gaussian Boundary Layers},
  pdfauthor={Marco Mantovanelli},
  pdfsubject={Two-variable Dirichlet series for a parity-perturbed Hofstadter recursion},
  pdfkeywords={Meta-Fibonacci sequence, Mantovanelli--Hofstadter sequence, Hofstadter recursion, two-variable Dirichlet series, dyadic renormalization, log-periodic fluctuation, Mellin transform, Gaussian boundary layer, Edgeworth expansion}
}

\newtheorem{theorem}{Theorem}[section]
\newtheorem{proposition}[theorem]{Proposition}
\newtheorem{corollary}[theorem]{Corollary}
\newtheorem{lemma}[theorem]{Lemma}

\theoremstyle{definition}

\theoremstyle{remark}
\newtheorem{remark}[theorem]{Remark}
\crefname{theorem}{theorem}{theorems}
\crefname{proposition}{proposition}{propositions}
\crefname{corollary}{corollary}{corollaries}
\crefname{lemma}{lemma}{lemmas}
\crefname{conjecture}{conjecture}{conjectures}
\crefname{definition}{definition}{definitions}
\crefname{example}{example}{examples}
\crefname{remark}{remark}{remarks}

\newcommand{\Qp}{\widetilde Q}

\newcommand{\C}{\mathbb C}
\newcommand{\N}{\mathbb N}
\newcommand{\RePart}{\operatorname{Re}}

\newcommand{\Hz}{\mathfrak H}
\newcommand{\Zn}{\mathfrak Z}
\newcommand{\eps}{\varepsilon}

\title{A Two-Variable Zeta Function for a\\
Parity-Perturbed Hofstadter \texorpdfstring{$Q$}{Q}-Recursion\\[1.5mm]
\large The Exceptional \texorpdfstring{$t=-1$}{t=-1} Slice and Gaussian Boundary Layers}
\author{Marco Mantovanelli\\
\small Independent Researcher\\
\small \href{mailto:marco@mantovanelli.de}{marco@mantovanelli.de}\\
\small ORCID: \href{https://orcid.org/0009-0002-0631-293X}{0009-0002-0631-293X}}
\date{}

\begin{document}
\maketitle

\begin{abstract}
We study the two-variable Dirichlet series
\[
 Z_{\Qp}(s,t)=\sum_{n\ge1}n^{-s}\Qp(n)^{-t}
\]
attached to the parity-perturbed Hofstadter recursion.  The known estimate
$\Qp(n)=n/2+O(n/\sqrt{\log n})$ implies that its exact domain of absolute
convergence is $\RePart(s+t)>1$.  With the critical coordinate $w=s+t$, we
separate the universal term $2^t\zeta(w)$ and derive an exact transport
hierarchy, a frequency--position representation, and a dyadic
renormalization identity for the correction.

The central result concerns the exceptional slice $t=-1$.  If
$E(n)=2\Qp(n)-n$ and $A(X)=\sum_{n\le X}E(n)$, the exact binary-arch clock
gives
\[
 A(X)=X\log_2X+X\Omega\!\left(\log_2\frac{3X}{32}\right)
       +O\!\left(\frac{X}{\sqrt{\log X}}\right),
\]
where $\Omega$ is an explicit continuous periodic function.  Partial
summation continues the normalized $t=-1$ correction to $\RePart w>0$.
The Fourier series of $\Omega$ then yields a boundary resonance lattice:
a double resonance at $w=0$ and simple resonances at
$2\pi i m/\log 2$.

After subtraction of this full-slice order-$X$ skeleton, we analyze the
negative-even arch channel.  Its exact companion-forest layers have a
Gaussian limit in the weak topology against Lipschitz tests.  In particular,
a canonical negative-arch subsequence realizes Clo\^{\i}tre's optimal
$n/\sqrt{\log n}$ pointwise scale with an explicit signed constant.  The
total negative-arch mass satisfies
\[
 \mathcal A_r=\frac{512}{9\sqrt{2\pi}}\frac{16^r}{\sqrt r}
 \left(1-\frac{13}{16r}+O(r^{-2})\right).
\]
Thus the first integrated Edgeworth coefficient is $-13/16$.  These arch
results are deliberately not promoted to a full-slice continuation across
$\RePart w=0$; the remaining full-slice Mellin problem lies beyond the scope
of this paper.
\end{abstract}

\noindent\textbf{Keywords.}
Meta-Fibonacci sequence; Mantovanelli--Hofstadter sequence; Hofstadter
recursion; two-variable Dirichlet series; dyadic renormalization;
log-periodic fluctuation; Mellin transform; Gaussian boundary layer;
Edgeworth expansion.

\medskip
\noindent\textbf{MSC 2020.}
Primary 11B37; Secondary 11M41, 11M06, 60F05, 05A15.

\setcounter{tocdepth}{2}
\tableofcontents

\section{Introduction}
\label{sec:introduction}

Nested recurrences define their next value by using earlier values as
indices.  This self-referential mechanism is the source of both their
combinatorial richness and their analytic difficulty.  Hofstadter's
classical $Q$-recursion is the best-known example \cite{Hofstadter1979};
even basic global questions about that sequence remain open.  A broad
meta-Fibonacci literature has developed structural tools based on
recurrence families, frequency sequences, and tree models; see, for
example, \cite{CallaghanChewTanny2005,IsgurReissTanny2009}.

We study the parity-perturbed Hofstadter \(Q\)-recursion introduced by the
author in \cite{Mantovanelli2026Frequency}, whose orbit \(\Qp\) was
subsequently termed the Mantovanelli--Hofstadter sequence by Clo\^{\i}tre
\cite{Cloitre2026} and is catalogued as OEIS A394051
\cite{OEISA394051}.  It is defined by
\begin{equation}
 \Qp(1)=\Qp(2)=1,\qquad
 \Qp(n)=\Qp(n-\Qp(n-1))+\Qp(n-\Qp(n-2))+(-1)^n
 \quad(n\ge3).
 \label{eq:perturbed}
\end{equation}
Unlike the classical recursion, \eqref{eq:perturbed} is known to be
globally well-defined.  Clo\^{\i}tre proved both this fact and the
optimal-order pointwise estimate
\begin{equation}
 \Qp(n)=\frac n2+O\!\left(\frac{n}{\sqrt{\log n}}\right),
 \qquad \frac{\Qp(n)}n\longrightarrow\frac12,
 \label{eq:cloitre-asymptotic}
\end{equation}
using a binary arch-and-forest description of the orbit
\cite{Cloitre2026}.  An exact dyadic law for the multiplicities of its
values was obtained in \cite{Mantovanelli2026Frequency}.  Those two
inputs make it possible to pass from structural recursion theory to
Dirichlet-series analysis.

\subsection{The analytic object and its critical coordinate}

We attach to \eqref{eq:perturbed} the two-variable zeta function
\begin{equation}
 {
 Z_{\Qp}(s,t):=\sum_{n\ge1}\frac{1}{n^s\Qp(n)^t}.}
 \label{eq:two-variable-zeta}
\end{equation}
At $t=0$ it is the Riemann zeta function, whereas at $s=0$ it records the
value spectrum of the orbit.  The asymptotic line $\Qp(n)\sim n/2$ suggests
$Z_{\Qp}(s,t)\sim2^t\zeta(s+t)$ and singles out
\[
 w:=s+t.
\]
Writing
\[
 \lambda_n:=\log\frac{2\Qp(n)}n,
\]
the normalized correction is exactly
\begin{equation}
 {
 \Hz(w,t)=\sum_{n\ge1}\frac{e^{-t\lambda_n}-1}{n^w}.}
 \label{eq:normalized-correction-preview}
\end{equation}
Thus the two-variable problem is a Dirichlet--Laplace transform of the
ordered logarithmic slope defects.  The first part of the paper develops
three exact ways of reading this transform: differentiation along the
transport direction $\partial_t-\partial_s$, which becomes differentiation
in \(t\) at fixed \(w\), regrouping by values and their occurrence positions,
and splitting the index set into its dyadic children.  These descriptions
separate the universal scale denominator \(1-2^{1-w}\) from the genuinely
recursive remainder.

This viewpoint is close in spirit to Mellin analysis of digital sums,
where dilation produces periodic fluctuations and vertical lattices of
complex frequencies \cite{Delange1975,FlajoletGrabnerKirschenhofer1994,
FlajoletGourdonDumas1995}.  Here the dilation law is not imposed on an
external digit statistic: it is generated internally by the nested
recursion.  The resulting periodic term and its pole skeleton must
therefore be extracted from the exact arch clock.

\subsection{The exceptional linear slice}

At $t=-1$, \eqref{eq:normalized-correction-preview} becomes the ordinary
Dirichlet series
\begin{equation}
 \Hz(w,-1)=\sum_{n\ge1}\frac{E(n)}{n^{w+1}},
 \qquad E(n):=2\Qp(n)-n.
 \label{eq:intro-tminus1}
\end{equation}
The pointwise estimate \eqref{eq:cloitre-asymptotic} alone gives only a
square-root logarithmic saving.  The exact center-of-mass clock is much
stronger after summation.  We prove that
\begin{equation}
 A(X):=\sum_{n\le X}E(n)
 =X\log_2X+X\Omega\!\left(\log_2\frac{3X}{32}\right)
 +O\!\left(\frac{X}{\sqrt{\log X}}\right),
 \label{eq:intro-summatory}
\end{equation}
with an explicit continuous $1$-periodic function $\Omega$.  Partial
summation then continues $\Hz(w,-1)$ holomorphically to $\RePart w>0$,
or equivalently continues $Z_{\Qp}(s,-1)$ from $\RePart s>2$ to
$\RePart s>1$.  Fourier analysis of $\Omega$ identifies the corresponding
Abelian boundary skeleton: a double resonance at the origin and simple
dyadic resonances at $2\pi i m/\log2$.

The error left by \eqref{eq:intro-summatory} contains finer binary
geometry.  On the negative-even arches it is an exact prefix imbalance of
an ordered companion forest.  Its layer widths form a truncated even
binomial mixture, while its order inside each layer is nontrivial.  We
prove a Gaussian layer skeleton, control the integrated discrepancy from
the skeleton by exact centroid recurrences, and obtain weak convergence of
the full raw negative-arch profile against every Lipschitz test.  At the central layer
cut, the Gaussian theorem also yields an explicit even subsequence \(n_r\)
for which
\[
 \left(\frac{\Qp(n_r)}{n_r}-\frac12\right)\sqrt{\log_2n_r}
 \longrightarrow-\frac{1}{3\sqrt{2\pi}};
\]
see \cref{cor:canonical-negative-subsequence}.  This signed negative
realization complements, rather than improves, Clo\^{\i}tre's global
limsup bounds.  The exact area identity then yields
\[
 \mathcal A_r=\frac{512}{9\sqrt{2\pi}}\frac{16^r}{\sqrt r}
 \left(1-\frac{13}{16r}+O(r^{-2})\right).
\]
The coefficient $-13/16$ is an integrated Edgeworth correction: it is
deduced from exact hypergeometric corner sums and forest centroids, not
from a formal Gaussian expansion.  Appendix~\ref{subsec:second-flat-edgeworth}
pushes the same constant-test calculation one order further and obtains
$1547/1536$.

For orientation, the full-slice chain runs from
\cref{thm:centered-error-asymptotic} through
\cref{prop:closed-pole-skeleton}.  The main channel-specific conclusions are
\cref{thm:negative-gaussian-skeleton,thm:weak-gaussian-full-profile,thm:first-edgeworth-negative-area};
\cref{cor:canonical-negative-subsequence} translates the Gaussian theorem
back to the original sequence.

\subsection{Scope and organization}

Two levels of assertion must be kept distinct.  The summatory law
\eqref{eq:intro-summatory}, the continuation to $\RePart w>0$, and the
dyadic boundary resonance lattice concern the complete $t=-1$ slice.  The
Gaussian and Edgeworth theorems concern one precisely identified
negative-even arch channel.  They do not, by themselves, continue the full
slice into $\RePart w<0$.  The remaining cells of the complete Mellin
remainder are not analyzed here; their uniform treatment lies beyond the
present scope.

Sections~2--7 develop the general two-variable framework.  Section~8 treats
the exceptional slice $t=-1$, including the summatory law, continuation,
log-periodic boundary structure, and Gaussian/Edgeworth analysis.
Section~9 isolates the remaining full-slice continuation problem.
Appendix~A contains the second constant-test coefficient.  Numerical tables
and figures are explicitly labelled as checks or illustrations and are never
used as proof inputs.

\section{The two-variable zeta function}

Throughout, \(\log\) denotes the natural logarithm and
\(\log_2x=(\log x)/(\log2)\).

For positive real \(x\) and complex \(z\), we use the unambiguous convention
\(x^{-z}=e^{-z\log x}\) with the real logarithm.

\begin{theorem}[Exact absolute-convergence domain]
\label{thm:absolute-domain}
The series \eqref{eq:two-variable-zeta} converges absolutely exactly in the half-space
\begin{equation}
\boxed{\RePart(s+t)>1.}
\label{eq:absolute-halfspace}
\end{equation}
It converges normally on compact subsets of this half-space and therefore defines a jointly holomorphic function there.
\end{theorem}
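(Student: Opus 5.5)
The plan is to compare each term with $n^{-(s+t)}2^{t}$ using the two-sided ratio bound that follows from \eqref{eq:cloitre-asymptotic}. Since $\Qp(n)/n\to1/2$, and since $\Qp(n)\ge1$ for every $n$ (global well-definedness gives positive integer values), there are constants $0<c_1\le c_2$ with $c_1 n\le\Qp(n)\le c_2 n$ for all $n\ge1$. Then
\[
 \bigl|n^{-s}\Qp(n)^{-t}\bigr| = n^{-\sigma}\Qp(n)^{-\tau}
 = n^{-(\sigma+\tau)}\Bigl(\frac{\Qp(n)}{n}\Bigr)^{-\tau},
 \qquad \sigma=\RePart s,\ \tau=\RePart t.
\]
The factor $(\Qp(n)/n)^{-\tau}$ lies between $\min(c_1^{-\tau},c_2^{-\tau})$ and $\max(c_1^{-\tau},c_2^{-\tau})$. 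So the absolute series is comparable, up to constants depending only on $\tau$, to $\sum n^{-\RePart(s+t)}$. This single comparison gives both directions. If $\RePart(s+t)>1$, the series converges absolutely. If $\RePart(s+t)\le1$, the terms are bounded below by a positive multiple of $n^{-1}$ in absolute value, so absolute convergence fails. That proves the domain is exactly \eqref{eq:absolute-halfspace}.

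For normal convergence, fix a compact $K$ in the half-space. By compactness, $\RePart(s+t)\ge1+\delta$ and $|\RePart t|\le T$ on $K$ for some $\delta>0$ and $T<\infty$. The ratio factor is then bounded uniformly by $C_K=\max(c_1,c_2^{-1},1)^{T}$, more precisely by $\max(c_1^{-T},c_1^{T},c_2^{-T},c_2^{T})$. This gives the majorant
\[
 \sup_{(s,t)\in K}\bigl|n^{-s}\Qp(n)^{-t}\bigr|\le C_K\,n^{-1-\delta},
\]
which is summable. Each term $e^{-s\log n-t\log\Qp(n)}$ is entire in $(s,t)\in\C^2$. By the Weierstrass M-test the series converges uniformly on $K$, and a locally uniform limit of holomorphic functions of several variables is holomorphic (apply Weierstrass in each variable and use Hartogs/Osgood, or simply the Cauchy integral formula on polydiscs). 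Hence the sum is jointly holomorphic.

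The only point needing care is the uniform lower bound $\Qp(n)\ge c_1 n$ for all $n$, not only for large $n$. This comes from the limit $\Qp(n)/n\to1/2$ for $n\ge N$, together with positivity of the finitely many initial values. Positivity is part of Clo\^{\i}tre's well-definedness result: all indices $n-\Qp(n-k)$ stay in range, which forces $1\le\Qp(n)\le n-1$ for $n\ge 3$. I expect no other obstacle. The sharpness at $\RePart(s+t)=1$ is automatic from the lower comparison with the harmonic series, and it holds for every fixed $t$.
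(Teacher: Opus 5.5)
Your proposal is correct and follows essentially the same route as the paper. A two-sided bound $c_1 n\le\Qp(n)\le c_2 n$ from \eqref{eq:cloitre-asymptotic} makes $|n^{-s}\Qp(n)^{-t}|$ comparable to $n^{-\RePart(s+t)}$ uniformly on compacts, which gives both directions of the domain and normal convergence; the paper only needs the bound for $n\ge n_0$, so extending it to all $n$ via positivity is harmless but unnecessary, and your first constant $\max(c_1,c_2^{-1},1)^{T}$ is not a valid bound, though the corrected $\max(c_1^{\pm T},c_2^{\pm T})$ you give right after it is.
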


\begin{proof}
By \eqref{eq:cloitre-asymptotic}, there exist constants \(0<c<C<\infty\) and \(n_0\) such that
\(cn\le\Qp(n)\le Cn\) for \(n\ge n_0\).  Hence, on any compact set of \((s,t)\)-space,
\[
\left|n^{-s}\Qp(n)^{-t}\right|
\asymp n^{-\RePart(s+t)}
\]
with constants uniform on the compact set.  Comparison with the ordinary \(p\)-series proves both convergence and divergence, and normal convergence gives holomorphy.
\end{proof}

The geometry of \eqref{eq:absolute-halfspace} already shows that \(w=s+t\) is the natural critical variable.

\section{Critical coordinates and the exact correction}

Define the normalized slope and its logarithm by
\begin{equation}
\rho_n:=\frac{2\Qp(n)}n,
\qquad
\lambda_n:=\log\rho_n.
\label{eq:rho-lambda}
\end{equation}
Then \(\rho_n\to1\), and \eqref{eq:cloitre-asymptotic} implies
\begin{equation}
\lambda_n=O\!\left((\log n)^{-1/2}\right).
\label{eq:lambda-bound}
\end{equation}
Put
\begin{equation}
\Zn(w,t):=2^{-t}Z_{\Qp}(w-t,t).
\label{eq:normalized-zeta}
\end{equation}
Then, for \(\RePart w>1\),
\begin{equation}
\Zn(w,t)
=\sum_{n\ge1}\frac{\rho_n^{-t}}{n^w}
=\zeta(w)+\Hz(w,t),
\label{eq:normalized-zeta-decomp}
\end{equation}
where
\begin{equation}
{
\Hz(w,t)
:=\sum_{n\ge1}\frac{\rho_n^{-t}-1}{n^w}
=\sum_{n\ge1}\frac{e^{-t\lambda_n}-1}{n^w}.
}
\label{eq:H-def}
\end{equation}
The correction in the original coordinates is
\begin{equation}
{
H_{\Qp}(s,t)
:=Z_{\Qp}(s,t)-2^t\zeta(s+t)
=2^t\Hz(s+t,t)
}
\qquad(\RePart(s+t)>1).
\label{eq:raw-normalized-correction}
\end{equation}

\begin{theorem}[Universal principal singularity]
\label{thm:abelian-principal}
For every compact set \(T\subset\C\), uniformly for \(t\in T\),
\begin{equation}
\lim_{\sigma\downarrow1}
(\sigma-1)\,2^{-t}Z_{\Qp}(\sigma-t,t)=1.
\label{eq:abelian-limit}
\end{equation}
Equivalently,
\begin{equation}
Z_{\Qp}(\sigma-t,t)
=\frac{2^t}{\sigma-1}
+o_T\!\left(\frac1{\sigma-1}\right).
\label{eq:principal-zeta}
\end{equation}
Moreover,
\begin{equation}
\Hz(\sigma,t)
=O_T\!\left((\sigma-1)^{-1/2}\right)
\qquad(\sigma\downarrow1).
\label{eq:H-sqrt-bound}
\end{equation}
\end{theorem}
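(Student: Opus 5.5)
The plan is to prove the bound \eqref{eq:H-sqrt-bound} on \(\Hz\) first and then read off \eqref{eq:abelian-limit} and \eqref{eq:principal-zeta} from the decomposition \eqref{eq:normalized-zeta-decomp}. For real \(\sigma>1\) we have \(2^{-t}Z_{\Qp}(\sigma-t,t)=\Zn(\sigma,t)=\zeta(\sigma)+\Hz(\sigma,t)\) by \eqref{eq:normalized-zeta}. Since \((\sigma-1)\zeta(\sigma)\to1\), it suffices to show \((\sigma-1)\Hz(\sigma,t)\to0\) uniformly for \(t\in T\). This follows at once from \(\Hz(\sigma,t)=O_T((\sigma-1)^{-1/2})\). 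Multiplying by \(2^t\), which is bounded on \(T\), then gives \eqref{eq:principal-zeta}.

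For the bound on \(\Hz\), we use \eqref{eq:lambda-bound}: \(|\lambda_n|\le K(\log n)^{-1/2}\) for \(n\ge 2\). In particular \(\lambda_n\) is bounded for all \(n\), because \(\Qp(n)\ge1\) and \(\rho_n\to1\). Fix \(M=\sup_{t\in T}|t|\). The elementary inequality \(|e^{z}-1|\le |z|e^{|z|}\) gives
\[
|e^{-t\lambda_n}-1|\le M|\lambda_n|e^{M|\lambda_n|}\le C_T(\log n)^{-1/2}\qquad(n\ge2),
\]
uniformly in \(t\in T\). The \(n=1\) term is bounded on \(T\), so
\[
|\Hz(\sigma,t)|\le C_T'+C_T\sum_{n\ge2}\frac{1}{n^{\sigma}\sqrt{\log n}}.
\]

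The one estimate needing care is that \(S(\sigma):=\sum_{n\ge2}n^{-\sigma}(\log n)^{-1/2}\asymp(\sigma-1)^{-1/2}\) as \(\sigma\downarrow1\); the upper bound is what we need. Since \(x\mapsto x^{-\sigma}(\log x)^{-1/2}\) is decreasing for \(x\ge2\), we compare with an integral:
\[
S(\sigma)\le O(1)+\int_2^\infty x^{-\sigma}(\log x)^{-1/2}\,dx .
\]
Substituting \(u=\log x\) turns the integral into \(\int_{\log2}^\infty e^{-(\sigma-1)u}u^{-1/2}\,du\le\Gamma(1/2)(\sigma-1)^{-1/2}\). This yields \eqref{eq:H-sqrt-bound} with an implied constant depending only on \(T\).

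No step is a genuine obstacle. The only point needing attention is uniformity in \(t\in T\), and the factor \(e^{M|\lambda_n|}\) handles it because \(\lambda_n\) is bounded.
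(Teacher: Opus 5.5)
Your proof is correct. Its core estimate is the same as the paper's: the pointwise bound \eqref{eq:lambda-bound} summed against \(n^{-\sigma}\). The differences lie in how the pieces are organized.

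For \eqref{eq:H-sqrt-bound}, the paper splits into dyadic blocks \(I_k=[2^k,2^{k+1})\) and sums \(\sum_k 2^{-k\eps}k^{-1/2}\ll\eps^{-1/2}\). Your integral comparison with \(\int_0^\infty e^{-(\sigma-1)u}u^{-1/2}\,du=\Gamma(1/2)(\sigma-1)^{-1/2}\) is just the continuous form of that sum. You also make the uniformity in \(t\in T\) explicit, via \(|e^z-1|\le|z|e^{|z|}\) and the boundedness of \(\lambda_n\); the paper only asserts \(|e^{-t\lambda_n}-1|\ll_T k^{-1/2}\).

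The more substantive difference concerns the limit \eqref{eq:abelian-limit}. The paper obtains it by citing the Abelian theorem for Dirichlet series, using only that \(\rho_n^{-t}\to1\) boundedly and uniformly on \(T\). You instead deduce it from the quantitative bound on \(\Hz\) together with \((\sigma-1)\zeta(\sigma)\to1\).

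Each route has an advantage. The paper's argument for the limit needs only the qualitative convergence \(\rho_n\to1\), so it would survive a weaker pointwise input. Yours is self-contained and gives a rate, \((\sigma-1)2^{-t}Z_{\Qp}(\sigma-t,t)=1+O_T(\sqrt{\sigma-1})\). The paper records that refinement only afterwards, as \eqref{eq:zeta-ratio} in \cref{cor:zeta-concentration}.
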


\begin{proof}
For fixed \(t\), the coefficients \(\rho_n^{-t}\) tend to one and remain bounded; the convergence is uniform for \(t\) in a compact set.  The standard Abelian theorem for Dirichlet series \cite{Apostol1976} therefore gives \eqref{eq:abelian-limit}.

For \eqref{eq:H-sqrt-bound}, split the integers into dyadic blocks \(I_k=[2^k,2^{k+1})\).  By \eqref{eq:lambda-bound}, uniformly for \(n\in I_k\),
\[
|e^{-t\lambda_n}-1|\ll_T k^{-1/2}.
\]
If \(\sigma=1+\eps\), the contribution of \(I_k\) is therefore
\[
\ll_T 2^k\,2^{-k(1+\eps)}k^{-1/2}
=2^{-k\eps}k^{-1/2}.
\]
Summation over \(k\) gives \(O_T(\eps^{-1/2})\).
\end{proof}

The bound \eqref{eq:H-sqrt-bound} is an absolute-value estimate and does not
exploit ordered cancellation.  The exceptional slice studied below shows that
such an estimate can be far from sharp.

\section{A transport equation and the logarithmic defect hierarchy}

The baseline \(2^t\zeta(s+t)\) is characterized by a first-order transport equation.  Define
\begin{equation}
\mathcal L
:=\partial_t-\partial_s-(\log2),
\label{eq:transport-op}
\end{equation}
where the last term means multiplication by \(-\log2\).

\begin{theorem}[Exact transport hierarchy]
\label{thm:transport}
For \(\RePart(s+t)>1\),
\begin{equation}
\mathcal L\bigl(2^t\zeta(s+t)\bigr)=0,
\label{eq:baseline-transport}
\end{equation}
and for every integer \(m\ge1\),
\begin{equation}
{
\mathcal L^m Z_{\Qp}(s,t)
=(-1)^m
\sum_{n\ge1}
\lambda_n^m\,n^{-s}\Qp(n)^{-t}.
}
\label{eq:transport-moments}
\end{equation}
In particular,
\begin{equation}
\mathcal L^m Z_{\Qp}(s,0)
=(-1)^mD_m(s),
\qquad
D_m(s):=\sum_{n\ge1}\frac{\lambda_n^m}{n^s}.
\label{eq:Dm-def}
\end{equation}
\end{theorem}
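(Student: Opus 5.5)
The plan is to compute termwise. The summands are entire in \((s,t)\), and by \cref{thm:absolute-domain} the series \eqref{eq:two-variable-zeta} converges normally on compact subsets of the half-space \(\RePart(s+t)>1\). Hence, by Weierstrass' theorem, we may apply \(\partial_s\) and \(\partial_t\), and therefore \(\mathcal L\) and its powers, term by term. The same justification must also cover the differentiated series, which carry extra factors \(\log n\) and \(\log\Qp(n)\). We will handle this by noting that the derivatives of a normally convergent series of holomorphic functions converge locally uniformly, by Cauchy estimates. Alternatively, \(|\lambda_n|^m\) is bounded for \(n\ge n_0\) by \eqref{eq:lambda-bound}, and for small \(n\) it is finite because \(\Qp(n)\ge1\).

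The baseline identity \eqref{eq:baseline-transport} comes first. Write \(f=2^t\zeta(s+t)\). Then \(\partial_t f=(\log 2)f+2^t\zeta'(s+t)\) and \(\partial_s f=2^t\zeta'(s+t)\). Subtracting gives \((\partial_t-\partial_s)f=(\log2)f\), so \(\mathcal L f=0\). The sign convention of \eqref{eq:transport-op} is read so that the operator removes exactly this \(\log 2\).

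For a single term \(u_n(s,t)=n^{-s}\Qp(n)^{-t}=e^{-s\log n-t\log\Qp(n)}\) we get
\[
(\partial_t-\partial_s)u_n=\bigl(\log n-\log\Qp(n)\bigr)u_n,
\]
and therefore
\[
\mathcal L u_n=\bigl(\log n-\log\Qp(n)-\log2\bigr)u_n=-\lambda_n u_n .
\]
Since \(\lambda_n\) does not depend on \((s,t)\), \(u_n\) is an eigenfunction of \(\mathcal L\) with constant eigenvalue \(-\lambda_n\). Induction on \(m\) then gives \(\mathcal L^m u_n=(-1)^m\lambda_n^m u_n\). Summing over \(n\), with the termwise justification above, yields \eqref{eq:transport-moments}. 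Setting \(t=0\) in this formula gives \eqref{eq:Dm-def}, because \(\Qp(n)^{0}=1\). The series for \(D_m(s)\) converges absolutely for \(\RePart s>1\).

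The only point needing care is the legitimacy of differentiating the series \(m\) times. Holomorphy of the normally convergent sum already settles this, so no step is a real obstacle. One remark on sign conventions: a direct computation shows that \(\partial_t-\partial_s\) acting on \(2^t\zeta\) produces \(+\log2\), so the operator must subtract \(\log 2\). I will adopt \(\mathcal L=\partial_t-\partial_s-\log2\) literally, which is consistent with both identities.
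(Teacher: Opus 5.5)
Your proof is correct and is essentially the paper's argument. The paper writes $Z_{\Qp}=2^t\Zn(w,t)$ and checks that $\mathcal L\bigl(2^tf(w,t)\bigr)=2^t\partial_tf(w,t)$, so $\mathcal L$ becomes $\partial_t$ at fixed $w$, and each term $e^{-t\lambda_n}n^{-w}$ picks up a factor $-\lambda_n$ per derivative; this is your eigenfunction identity $\mathcal L u_n=-\lambda_n u_n$, computed in the original $(s,t)$ coordinates. Your justification of termwise differentiation via Weierstrass' theorem, which the paper leaves implicit, is a welcome addition.
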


\begin{proof}
In the coordinates \(w=s+t\), write
\[
Z_{\Qp}(s,t)=2^t\Zn(w,t).
\]
A direct calculation gives
\[
\mathcal L\bigl(2^tf(w,t)\bigr)=2^t\partial_tf(w,t).
\]
Since \(\Zn(w,t)=\sum_n e^{-t\lambda_n}n^{-w}\), repeated differentiation yields \eqref{eq:transport-moments}.  Taking \(f(w,t)=\zeta(w)\) gives \eqref{eq:baseline-transport}.
\end{proof}

Thus \(\mathcal L\) annihilates the macroscopic model exactly, and every application of \(\mathcal L\) extracts one additional logarithmic slope defect.

\begin{proposition}[Boundary hierarchy of defect moments]
\label{prop:moment-hierarchy}
As \(\sigma\downarrow1\),
\begin{align}
D_1(\sigma)&=O\!\left((\sigma-1)^{-1/2}\right),
\label{eq:D1-bound}\\
D_2(\sigma)&=O\!\left(\log\frac1{\sigma-1}\right),
\label{eq:D2-bound}\\
D_m(\sigma)&=O_m(1),\qquad m\ge3.
\label{eq:Dm-bound}
\end{align}
Moreover, for \(t\) in a compact set,
\begin{equation}
\Hz(w,t)
=-tD_1(w)+\frac{t^2}{2}D_2(w)+R_3(w,t),
\label{eq:second-order-H}
\end{equation}
where the defining series for \(R_3(w,t)\) converges absolutely even on the boundary line \(\RePart w=1\).
\end{proposition}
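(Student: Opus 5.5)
The plan is to estimate all three moment series block by block on the dyadic intervals \(I_k=[2^k,2^{k+1})\), exactly as in the proof of \cref{thm:abelian-principal}. The only input is the pointwise bound \eqref{eq:lambda-bound}, which gives \(|\lambda_n|\ll k^{-1/2}\) uniformly for \(n\in I_k\) and \(k\ge1\). The finitely many small \(n\) for which \(\lambda_n\) is merely bounded contribute \(O(1)\) and can be discarded. For \(\sigma=1+\eps\), the contribution of \(I_k\) to \(\sum_n|\lambda_n|^m n^{-\sigma}\) is then
\[
\ll_m 2^k\,2^{-k(1+\eps)}k^{-m/2}=2^{-k\eps}k^{-m/2}.
\]

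The three cases follow by summing over \(k\).
\begin{itemize}
\item For \(m=1\), comparison with \(\int_1^\infty e^{-x\eps\log2}x^{-1/2}\,dx\asymp\eps^{-1/2}\) gives \eqref{eq:D1-bound}.
\item For \(m=2\), the sum \(\sum_{k\ge1}2^{-k\eps}k^{-1}=-\log(1-2^{-\eps})=\log(1/\eps)+O(1)\) gives \eqref{eq:D2-bound}.
\item For \(m\ge3\), \(\sum_k k^{-m/2}<\infty\) uniformly in \(\eps\ge0\), which gives \eqref{eq:Dm-bound}.
\end{itemize}
For \(m\ge3\) the same estimate shows that \(\sum_n|\lambda_n|^m n^{-1}\) converges. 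So these bounds are in fact absolute-value bounds valid up to and including \(\sigma=1\).

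For the expansion \eqref{eq:second-order-H}, I would use Taylor's theorem with remainder for the exponential in each term of \eqref{eq:H-def}. This gives
\[
e^{-t\lambda_n}-1=-t\lambda_n+\tfrac{t^2}{2}\lambda_n^2+r_3(t\lambda_n),\qquad |r_3(z)|\le \tfrac{|z|^3}{6}e^{|z|}.
\]
For \(\RePart w>1\) all three resulting series converge absolutely, since \(\lambda_n\) is bounded. We can therefore split the sum termwise and set \(R_3(w,t):=\sum_n r_3(t\lambda_n)n^{-w}\).

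For \(t\) in a compact set \(T\), boundedness of \(\lambda_n\) makes \(e^{|t\lambda_n|}\) uniformly bounded, so \(|r_3(t\lambda_n)|\ll_T|\lambda_n|^3\). On \(\RePart w\ge1\) we have \(|n^{-w}|\le n^{-1}\), so the series for \(R_3\) is dominated by \(\sum_n|\lambda_n|^3/n\). This converges by the \(m=3\) case above. Hence the defining series of \(R_3\) converges absolutely, and uniformly for \(t\in T\), on the closed half-plane \(\RePart w\ge1\), including the boundary line.

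There is no serious obstacle. The only points needing care are the exact rate \(\log(1/\eps)\) in the \(m=2\) case, which is a standard geometric-logarithm sum, and making explicit that the Taylor remainder constant is uniform on compact \(t\)-sets.
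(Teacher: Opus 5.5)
Your proposal is correct and is essentially the paper's proof: the same dyadic blocks with \(|\lambda_n|^m\ll k^{-m/2}\) and block contributions \(2^{-k\eps}k^{-m/2}\) summed over \(k\), followed by a third-order Taylor expansion whose remainder is dominated by the absolutely convergent \(m=3\) boundary series. Your version simply adds detail the paper leaves implicit, namely the explicit sum \(-\log(1-2^{-\eps})\) for \(m=2\) and the uniform remainder bound \(|z|^3e^{|z|}/6\) on compact \(t\)-sets.
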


\begin{proof}
On the dyadic block \(I_k\), \eqref{eq:lambda-bound} gives \(|\lambda_n|^m\ll k^{-m/2}\).  At \(\sigma=1+\eps\), the block contribution to \(D_m\) is therefore \(\ll2^{-k\eps}k^{-m/2}\).  This yields \eqref{eq:D1-bound}--\eqref{eq:Dm-bound}.  Taylor's theorem gives
\[
e^{-t\lambda}-1
=-t\lambda+\frac{t^2\lambda^2}{2}+O_T(|\lambda|^3),
\]
and the \(m=3\) boundary series is absolutely convergent.
\end{proof}

The proposition identifies a useful hierarchy: at the level of the proved pointwise asymptotic, only the first two logarithmic moments can produce divergent boundary terms.  All higher Taylor modes are already summable on \(\RePart w=1\).

\section{A zeta-weighted probabilistic interpretation}

For real \(\sigma>1\), define a probability measure on \(\N\) by
\begin{equation}
\mathbb P_\sigma(N=n)=\frac{n^{-\sigma}}{\zeta(\sigma)}.
\label{eq:zeta-measure}
\end{equation}
Then \eqref{eq:normalized-zeta-decomp} becomes
\begin{equation}
{
\frac{2^{-t}Z_{\Qp}(\sigma-t,t)}{\zeta(\sigma)}
=\mathbb E_\sigma\!\left[e^{-t\lambda_N}\right].
}
\label{eq:laplace-expectation}
\end{equation}
Thus the two-variable zeta function is a Laplace transform of the logarithmic orbit defect under the classical zeta distribution.

\begin{corollary}[Concentration at the critical line]
\label{cor:zeta-concentration}
Uniformly for \(t\) in compact subsets of \(\C\),
\begin{equation}
\frac{2^{-t}Z_{\Qp}(\sigma-t,t)}{\zeta(\sigma)}
=1+O_T\!\left(\sqrt{\sigma-1}\right).
\label{eq:zeta-ratio}
\end{equation}
Furthermore,
\begin{align}
\mathbb E_\sigma|\lambda_N|
&=O\!\left(\sqrt{\sigma-1}\right),\\
\mathbb E_\sigma|\lambda_N|^2
&=O\!\left((\sigma-1)\log\frac1{\sigma-1}\right),\\
\mathbb E_\sigma|\lambda_N|^m
&=O_m(\sigma-1),\qquad m\ge3.
\end{align}
\end{corollary}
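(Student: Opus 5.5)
The plan is to divide the estimates already proved by \(\zeta(\sigma)\) and use \(\zeta(\sigma)\asymp(\sigma-1)^{-1}\) as \(\sigma\downarrow1\). By \eqref{eq:normalized-zeta-decomp} and \eqref{eq:laplace-expectation}, the ratio on the left of \eqref{eq:zeta-ratio} is
\[
1+\frac{\Hz(\sigma,t)}{\zeta(\sigma)}.
\]
Then \eqref{eq:H-sqrt-bound} from \cref{thm:abelian-principal} gives \(\Hz(\sigma,t)=O_T((\sigma-1)^{-1/2})\) uniformly for \(t\in T\). Since \(\zeta(\sigma)\ge(\sigma-1)^{-1}\) for \(\sigma>1\) (compare the sum with \(\int_1^\infty x^{-\sigma}\,dx\)), the quotient is \(O_T((\sigma-1)^{1/2})\). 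This proves \eqref{eq:zeta-ratio}.

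For the moments, write \(\mathbb E_\sigma|\lambda_N|^m=\zeta(\sigma)^{-1}\sum_n|\lambda_n|^m n^{-\sigma}\). I would redo the dyadic block bound from \cref{prop:moment-hierarchy} for the absolute series, since that proof already controls \(|\lambda_n|^m\) on each block. Concretely, \eqref{eq:lambda-bound} gives \(|\lambda_n|^m\ll k^{-m/2}\) on \(I_k\) for \(k\ge1\); finitely many small \(n\) contribute \(O(1)\). With \(\sigma=1+\eps\), block \(I_k\) then contributes \(\ll 2^{-k\eps}k^{-m/2}\). It remains to sum over \(k\) in each case.

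\textbf{Case \(m=1\).} Compare \(\sum_k 2^{-k\eps}k^{-1/2}\) with \(\int_1^\infty e^{-x\eps\log 2}x^{-1/2}\,dx\). This gives \(O(\eps^{-1/2})\).

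\textbf{Case \(m=2\).} Split at \(k\approx1/\eps\). For \(k\le 1/\eps\) the harmonic sum gives \(O(\log(1/\eps))\). For \(k>1/\eps\), the tail \(\sum_{k>1/\eps}k^{-1}2^{-k\eps}\) is \(O(1)\).

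\textbf{Case \(m\ge3\).} The series \(\sum_k k^{-m/2}\) converges, so the sum is \(O_m(1)\).

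Multiplying each of these bounds by \(1/\zeta(\sigma)\le\eps\) gives the three stated moment estimates.

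Nothing here is hard. The only point needing care is the \(m=2\) logarithmic sum, which the split handles. One should also check that the bound \(|e^{-t\lambda}-1|\ll_T k^{-1/2}\) is uniform for complex \(t\) in a compact set. It is, because \(\lambda_n\) is bounded, so \(|e^{-t\lambda}-1|\le |t||\lambda|e^{|t||\lambda|}\).
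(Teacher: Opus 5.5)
Your proof is correct and follows the paper's argument essentially line for line. You write the ratio as \(1+\Hz(\sigma,t)/\zeta(\sigma)\) and apply \eqref{eq:H-sqrt-bound}, then redo the dyadic block estimate with absolute values, sum over \(k\), and divide by \(\zeta(\sigma)\asymp(\sigma-1)^{-1}\); your lower bound \(\zeta(\sigma)\ge(\sigma-1)^{-1}\) and the split at \(k\approx 1/\eps\) for \(m=2\) just spell out steps the paper leaves implicit.
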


\begin{proof}
By \eqref{eq:normalized-zeta-decomp},
\[
 \frac{2^{-t}Z_{\Qp}(\sigma-t,t)}{\zeta(\sigma)}-1
 =\frac{\Hz(\sigma,t)}{\zeta(\sigma)}.
\]
Thus \eqref{eq:H-sqrt-bound} and
\(\zeta(\sigma)\asymp(\sigma-1)^{-1}\) prove
\eqref{eq:zeta-ratio}, uniformly for \(t\) in compact sets.

For the absolute moments, repeat the dyadic block estimate with absolute
values.  Put \(\eps:=\sigma-1\) and \(I_k=[2^k,2^{k+1})\).  By
\eqref{eq:lambda-bound}, for every \(m\ge1\),
\[
 \sum_{n\in I_k}\frac{|\lambda_n|^m}{n^\sigma}
 \ll_m 2^{-k\eps}(k+1)^{-m/2}.
\]
Consequently,
\[
 \sum_{n\ge1}\frac{|\lambda_n|}{n^\sigma}
 \ll\eps^{-1/2},
 \qquad
 \sum_{n\ge1}\frac{|\lambda_n|^2}{n^\sigma}
 \ll\log\frac1\eps,
 \qquad
 \sum_{n\ge1}\frac{|\lambda_n|^m}{n^\sigma}
 \ll_m1\quad(m\ge3).
\]
Using \eqref{eq:zeta-measure} and
\(\zeta(\sigma)\asymp\eps^{-1}\), division by \(\zeta(\sigma)\)
gives all three asserted absolute-moment estimates.
\end{proof}

In this sense the zeta measure concentrates the normalized slope \(2\Qp(n)/n\) at one as the critical line is approached from the right.

\section{Regrouping by values: frequencies and occurrence positions}

Every value of \(\Qp\) is odd, and every positive odd integer occurs with finite frequency \cite{Cloitre2026,Mantovanelli2026Frequency}.  Put
\begin{equation}
\mathcal N_m:=\{n\ge1:\Qp(n)=2m-1\},
\qquad
F(m):=|\mathcal N_m|.
\label{eq:level-sets}
\end{equation}
For each fixed \(m\), define the finite Dirichlet polynomial
\begin{equation}
A_m(s):=\sum_{n\in\mathcal N_m}n^{-s}.
\label{eq:Am}
\end{equation}

\begin{proposition}[Exact level-set decomposition]
\label{prop:level-decomp}
For \(\RePart(s+t)>1\),
\begin{equation}
{
Z_{\Qp}(s,t)
=\sum_{m\ge1}(2m-1)^{-t}A_m(s).
}
\label{eq:level-decomp}
\end{equation}
The two coordinate slices satisfy
\begin{equation}
A_m(0)=F(m),
\qquad
Z_{\Qp}(0,t)=\sum_{m\ge1}\frac{F(m)}{(2m-1)^t}
\qquad(\RePart t>1),
\label{eq:orbit-slice}
\end{equation}
and
\begin{equation}
\sum_{m\ge1}A_m(s)=\zeta(s)
\qquad(\RePart s>1).
\label{eq:index-slice}
\end{equation}
\end{proposition}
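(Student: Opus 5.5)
The plan is to regroup the absolutely convergent series of \cref{thm:absolute-domain} according to the value taken by \(\Qp\). Since every value of \(\Qp\) is a positive odd integer and every odd integer \(2m-1\) is attained only finitely often, the sets \(\mathcal N_m\) for \(m\ge1\) partition \(\N\) into finite pieces. For \(\RePart(s+t)>1\) the family \(\{n^{-s}\Qp(n)^{-t}\}_{n\ge1}\) is absolutely summable. Summation over an arbitrary partition is therefore legitimate, by Fubini for counting measure or the generalized associativity of absolutely convergent series. On \(\mathcal N_m\) the factor \(\Qp(n)^{-t}=(2m-1)^{-t}\) is constant and can be pulled out of the finite inner sum, which leaves exactly \(A_m(s)\). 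This gives \eqref{eq:level-decomp}. The resulting outer series over \(m\) converges absolutely, since it is dominated by the regrouped absolute series.

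For the coordinate slices, \(A_m(0)=\sum_{n\in\mathcal N_m}1=F(m)\) is immediate, because \(\mathcal N_m\) is finite and \(n^{0}=1\). Setting \(s=0\) in \eqref{eq:level-decomp} is allowed whenever \(\RePart t>1\), since then \(\RePart(0+t)>1\) lies in the half-space of \cref{thm:absolute-domain}. This yields \eqref{eq:orbit-slice}. Similarly, \(t=0\) with \(\RePart s>1\) lies in the domain and gives \(Z_{\Qp}(s,0)=\zeta(s)\) on the left and \(\sum_m A_m(s)\) on the right, which is \eqref{eq:index-slice}. This last identity can also be seen directly as the regrouping of \(\sum_n n^{-s}\) over the partition \(\{\mathcal N_m\}\).

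The only point needing care, and thus the main (mild) obstacle, is justifying the partition. We need that every \(n\) lies in exactly one \(\mathcal N_m\), i.e.\ that all values are odd and positive, and that each \(\mathcal N_m\) is finite, so that \(A_m\) is a genuine entire Dirichlet polynomial. Both facts are quoted from \cite{Cloitre2026,Mantovanelli2026Frequency}. Finiteness also follows directly from \(\Qp(n)\to\infty\), which is a consequence of \eqref{eq:cloitre-asymptotic}. Once the partition is in place, everything else is absolute-convergence bookkeeping. Empty \(\mathcal N_m\), were any to occur, would simply contribute \(A_m=0\).
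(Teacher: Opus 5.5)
Your proposal is correct and matches the argument the paper intends. The paper gives no separate proof: it treats \eqref{eq:level-decomp} as the regrouping of the absolutely convergent series of \cref{thm:absolute-domain} over the finite level sets \(\mathcal N_m\), using the quoted facts that all values are odd and occur finitely often, and obtains \eqref{eq:orbit-slice} and \eqref{eq:index-slice} by specializing to \(s=0\) and \(t=0\), exactly as you do. Your remark that finiteness of each \(\mathcal N_m\) already follows from \(\Qp(n)\to\infty\) is also correct, and it makes the argument a little more self-contained.
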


This formulation explains what the second variable adds.  The orbit zeta function at \(s=0\) remembers only the multiplicities \(F(m)\); the full \(s\)-dependence records where the occurrences of each value are located.

The proved asymptotic \eqref{eq:cloitre-asymptotic} allows a quantitative comparison with the ordinary frequency Dirichlet series.  If \(n\in\mathcal N_m\), then
\begin{equation}
\frac{n}{4m}=1+O\!\left((\log m)^{-1/2}\right)
\label{eq:occurrence-location}
\end{equation}
for \(m\to\infty\), uniformly over all occurrences of the value \(2m-1\).

Define the ordered frequency correction
\begin{equation}
C_F(w):=\sum_{m\ge1}\frac{F(m)-4}{m^w},
\qquad \RePart w>1.
\label{eq:CF}
\end{equation}
The exact dyadic frequency law implies the block-mass identity
\begin{equation}
\sum_{2^k\le m<2^{k+1}}F(m)=4\cdot2^k-1.
\label{eq:block-frequency-mass}
\end{equation}

\begin{theorem}[Frequency-position bridge]
\label{thm:frequency-position-bridge}
Let \(w=s+t\).  In \(\RePart w>1\),
\begin{equation}
{
Z_{\Qp}(s,t)
=2^{-2s-t}\bigl(4\zeta(w)+C_F(w)\bigr)
+E_{\mathrm{pos}}(s,t),
}
\label{eq:frequency-position-bridge}
\end{equation}
where
\begin{equation}
E_{\mathrm{pos}}(s,t)
=2^{-2s-t}
\sum_{m\ge1}\frac{1}{m^w}
\sum_{n\in\mathcal N_m}
\left[
\left(\frac{n}{4m}\right)^{-s}
\left(1-\frac1{2m}\right)^{-t}-1
\right].
\label{eq:Epos}
\end{equation}
For fixed \(t\) in a compact set and \(s=\sigma-t\),
\begin{equation}
E_{\mathrm{pos}}(\sigma-t,t)
=O_T\!\left((\sigma-1)^{-1/2}\right)
\qquad(\sigma\downarrow1).
\label{eq:Epos-bound}
\end{equation}
Consequently,
\begin{equation}
{
H_{\Qp}(s,t)
=2^t\bigl(4^{1-w}-1\bigr)\zeta(w)
+2^{-2s-t}C_F(w)
+E_{\mathrm{pos}}(s,t).
}
\label{eq:H-frequency-position}
\end{equation}
The first term on the right has a removable singularity at \(w=1\).
\end{theorem}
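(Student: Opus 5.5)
The plan is to start from the level-set decomposition \cref{prop:level-decomp}. The idea is to normalize each term \(n^{-s}(2m-1)^{-t}\) with \(n\in\mathcal N_m\) against the model location \(n\approx4m\) and the model value \(2m-1\approx2m\). Write
\[
n^{-s}(2m-1)^{-t}=(4m)^{-s}(2m)^{-t}\Bigl(\tfrac{n}{4m}\Bigr)^{-s}\Bigl(1-\tfrac1{2m}\Bigr)^{-t}.
\]
The prefactor is \(2^{-2s-t}m^{-w}\). Adding and subtracting \(1\) inside the bracket and summing over \(n\in\mathcal N_m\) gives
\[
Z_{\Qp}(s,t)=2^{-2s-t}\sum_m F(m)m^{-w}+E_{\mathrm{pos}}(s,t).
\]
Since \(\sum_m F(m)m^{-w}=4\zeta(w)+C_F(w)\), this is \eqref{eq:frequency-position-bridge}.

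Absolute convergence in \(\RePart w>1\) needs two facts. First, \(F(m)\) must be bounded on average: \eqref{eq:block-frequency-mass} gives \(\sum_{m\in[2^k,2^{k+1})}F(m)\ll2^k\). Second, the bracket must be bounded, which follows from \eqref{eq:occurrence-location}. These justify all rearrangements. In particular \(C_F\) converges absolutely for \(\RePart w>1\), by dyadic blocks and the block-mass identity.

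For \eqref{eq:Epos-bound}, set \(s=\sigma-t\) with \(t\in T\) compact, so \(s\) stays in a compact set and \(w=\sigma\) is real. By \eqref{eq:occurrence-location}, \(\log(n/4m)=O((\log m)^{-1/2})\) uniformly over occurrences, and \(\log(1-1/(2m))=O(1/m)\). Hence the bracket is \(O_T((\log m)^{-1/2})\). On the dyadic block \(m\in[2^k,2^{k+1})\) the total number of occurrences is \(4\cdot2^k-1\) by \eqref{eq:block-frequency-mass}. The block contribution is therefore
\[
\ll_T 2^k\cdot2^{-k\sigma}k^{-1/2}=2^{-k\eps}k^{-1/2},\qquad \eps=\sigma-1.
\]
Summing over \(k\) gives \(O_T(\eps^{-1/2})\), as in the proof of \cref{thm:abelian-principal}. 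The prefactor \(2^{-2s-t}\) is bounded on compacts.

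\eqref{eq:H-frequency-position} then follows by subtracting \(2^t\zeta(w)\) and rewriting \(2^{-2s-t}\cdot4=2^{2-2s-t}=2^t4^{1-w}\), using \(2s+t=2w-t\). For the removable singularity, note that \(4^{1-w}-1\) has a simple zero at \(w=1\) while \(\zeta\) has a simple pole there. The product is holomorphic near \(w=1\) with value \(-\log4\).

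The main obstacle is the uniformity of \eqref{eq:occurrence-location} over all occurrences in the level set. I would derive it from \eqref{eq:cloitre-asymptotic}. Given \(\Qp(n)=2m-1\), we have \(2m-1=\tfrac n2(1+O((\log n)^{-1/2}))\), hence \(n\asymp m\) and \(\log n\asymp\log m\). Inverting then gives \(n/(4m)=1+O((\log m)^{-1/2})\). The only care needed is that finitely many small \(n\) are harmless, because each \(F(m)\) is finite.
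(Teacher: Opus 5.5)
Your proposal is correct and follows the paper's proof essentially step for step: the same factorization $n^{-s}(2m-1)^{-t}=2^{-2s-t}m^{-w}(n/4m)^{-s}(1-1/(2m))^{-t}$ summed over the finite level sets, the same dyadic block estimate for $E_{\mathrm{pos}}$ driven by \eqref{eq:occurrence-location} and \eqref{eq:block-frequency-mass}, and the same identity $4\cdot2^{-2s-t}=2^t4^{1-w}$. Your extra remarks (absolute convergence justifying the regrouping, the value $-\log4$ of $(4^{1-w}-1)\zeta(w)$ at $w=1$, and deriving \eqref{eq:occurrence-location} from \eqref{eq:cloitre-asymptotic}) only fill in details the paper leaves implicit. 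One small correction: small $n$ are harmless there because they lie in only finitely many level sets, not because each $F(m)$ is finite.
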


\begin{proof}
For \(n\in\mathcal N_m\), factor
\[
n^{-s}(2m-1)^{-t}
=2^{-2s-t}m^{-w}
\left(\frac{n}{4m}\right)^{-s}
\left(1-\frac1{2m}\right)^{-t}.
\]
Summing first over the finite set \(\mathcal N_m\) gives \eqref{eq:frequency-position-bridge} and \eqref{eq:Epos}.

By \eqref{eq:occurrence-location}, the bracket in \eqref{eq:Epos} is
\(O_T((\log m)^{-1/2})\).  On a dyadic value block, the total multiplicity is \(O(2^k)\) by \eqref{eq:block-frequency-mass}; hence the block contribution at \(w=1+\eps\) is
\(O_T(2^{-k\eps}k^{-1/2})\).  Summing proves \eqref{eq:Epos-bound}.  Finally, substitute
\(\sum_mF(m)m^{-w}=4\zeta(w)+C_F(w)\) and subtract \(2^t\zeta(w)\).  Since
\(4\cdot2^{-2s-t}=2^t4^{1-w}\), \eqref{eq:H-frequency-position} follows.
\end{proof}

Equation \eqref{eq:H-frequency-position} separates two genuinely different sources of fine structure: the ordered frequency discrepancy \(C_F\) and the displacement of the actual occurrences from the macroscopic location \(n=4m\).

\section{Exact dyadic renormalization in the index variable}

The two-variable zeta function also admits an exact scale splitting.  Define
\begin{equation}
r_n^+:=\frac{\Qp(2n)}{2\Qp(n)},
\qquad
r_n^-:=\frac{\Qp(2n-1)}{2\Qp(n)}.
\label{eq:rplusminus}
\end{equation}
By \eqref{eq:cloitre-asymptotic},
\begin{equation}
r_n^\pm=1+O\!\left((\log n)^{-1/2}\right).
\label{eq:rplusminus-asymptotic}
\end{equation}
The even ratio contains the familiar dyadic defect explicitly:
\[
r_n^+-1
=\frac{\Qp(2n)-2\Qp(n)}{2\Qp(n)}.
\]

For \(w=s+t\), define the scale-defect transform
\begin{align}
\mathscr R(s,t)
:=\sum_{n\ge1}n^{-s}\Qp(n)^{-t}
\Bigg[& (r_n^+)^{-t}
+\left(1-\frac1{2n}\right)^{-s}(r_n^-)^{-t}
-2\Bigg].
\label{eq:scale-defect-transform}
\end{align}

\begin{theorem}[Exact dyadic renormalization equation]
\label{thm:dyadic-renorm}
For \(\RePart(s+t)>1\),
\begin{equation}
\boxed{
\bigl(1-2^{1-s-t}\bigr)Z_{\Qp}(s,t)
=2^{-s-t}\mathscr R(s,t).
}
\label{eq:dyadic-renorm}
\end{equation}
Equivalently, with \(w=s+t\),
\begin{equation}
\bigl(1-2^{1-w}\bigr)Z_{\Qp}(w-t,t)
=2^{-w}\mathscr R(w-t,t).
\label{eq:dyadic-renorm-w}
\end{equation}
Furthermore, for each fixed \(t\in\C\),
\begin{equation}
{
\lim_{\sigma\downarrow1}
\mathscr R(\sigma-t,t)
=2^{t+1}\log2.
}
\label{eq:R-critical-limit}
\end{equation}
\end{theorem}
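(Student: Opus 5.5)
The plan is to derive \eqref{eq:dyadic-renorm} by splitting the index set into even and odd indices. The limit \eqref{eq:R-critical-limit} will then follow from \cref{thm:abelian-principal}.

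Fix \((s,t)\) with \(\RePart(s+t)>1\). By \cref{thm:absolute-domain} the series \eqref{eq:two-variable-zeta} converges absolutely. I will therefore rearrange it into its even part \(n\mapsto 2n\) and its odd part \(n\mapsto 2n-1\), with \(n\ge1\):
\[
Z_{\Qp}(s,t)=\sum_{n\ge1}(2n)^{-s}\Qp(2n)^{-t}+\sum_{n\ge1}(2n-1)^{-s}\Qp(2n-1)^{-t}.
\]
All values \(\Qp(n)\) are positive, so \(r_n^\pm>0\). Since \(x^{-z}=e^{-z\log x}\) is taken with the real logarithm, we have \((ab)^{-z}=a^{-z}b^{-z}\) for positive reals \(a,b\). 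This is the only convention point to check; it is what makes the factorizations below legitimate for complex \(s,t\).

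Writing \(\Qp(2n)=2\Qp(n)r_n^+\), the even term becomes
\[
2^{-s-t}\,n^{-s}\Qp(n)^{-t}(r_n^+)^{-t}.
\]
Writing \(2n-1=2n(1-\tfrac1{2n})\) and \(\Qp(2n-1)=2\Qp(n)r_n^-\), the odd term becomes
\[
2^{-s-t}\,n^{-s}\Qp(n)^{-t}\bigl(1-\tfrac1{2n}\bigr)^{-s}(r_n^-)^{-t}.
\]

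Adding and subtracting \(2\cdot 2^{-s-t}n^{-s}\Qp(n)^{-t}\) termwise gives
\[
Z_{\Qp}(s,t)=2^{-s-t}\bigl(\mathscr R(s,t)+2Z_{\Qp}(s,t)\bigr).
\]
Here \(\mathscr R\) converges absolutely, because it is a linear combination of three absolutely convergent series: the even and odd subseries of \(Z_{\Qp}\), each rescaled by \(2^{s+t}\), together with \(2Z_{\Qp}\). Rearranging the display yields \eqref{eq:dyadic-renorm}, and substituting \(s=w-t\) gives \eqref{eq:dyadic-renorm-w}.

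For the limit \eqref{eq:R-critical-limit}, set \(w=\sigma\) and solve \eqref{eq:dyadic-renorm-w} for \(\mathscr R\):
\[
\mathscr R(\sigma-t,t)=(2^\sigma-2)\,Z_{\Qp}(\sigma-t,t)
=\frac{2^\sigma-2}{\sigma-1}\cdot(\sigma-1)Z_{\Qp}(\sigma-t,t).
\]
As \(\sigma\downarrow1\), the first factor tends to \(2\log2\), the derivative of \(2^\sigma\) at \(\sigma=1\). The second factor tends to \(2^t\) by \eqref{eq:abelian-limit}. The product is therefore \(2^{t+1}\log 2\).

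I do not expect a genuine obstacle. The only points requiring care are the legitimacy of the rearrangement, which is covered by absolute convergence, and the multiplicativity of complex powers of positive reals under the stated convention. All the analytic content of the limit is already contained in \cref{thm:abelian-principal}.
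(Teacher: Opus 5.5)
Your proof is correct and follows the same route as the paper: split into even and odd indices using $\Qp(2n)=2\Qp(n)r_n^+$, $\Qp(2n-1)=2\Qp(n)r_n^-$ and $2n-1=2n\bigl(1-\tfrac1{2n}\bigr)$, add and subtract twice the original summand, then write $\mathscr R(\sigma-t,t)=(2^\sigma-2)Z_{\Qp}(\sigma-t,t)$ and apply the Abelian limit of \cref{thm:abelian-principal}. Your explicit checks of the absolute convergence of $\mathscr R$ and of the multiplicativity of complex powers of positive reals supply details that the paper leaves implicit.
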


\begin{proof}
Split \(Z_{\Qp}\) into even and odd indices.  The even terms are
\[
(2n)^{-s}\Qp(2n)^{-t}
=2^{-w}n^{-s}\Qp(n)^{-t}(r_n^+)^{-t},
\]
and the odd terms are
\[
(2n-1)^{-s}\Qp(2n-1)^{-t}
=2^{-w}n^{-s}\Qp(n)^{-t}
\left(1-\frac1{2n}\right)^{-s}(r_n^-)^{-t}.
\]
Adding and subtracting twice the original summand gives \eqref{eq:dyadic-renorm}.

For \eqref{eq:R-critical-limit}, rearrange \eqref{eq:dyadic-renorm-w} and use \cref{thm:abelian-principal}:
\[
\mathscr R(\sigma-t,t)
=2^\sigma\bigl(1-2^{1-\sigma}\bigr)Z_{\Qp}(\sigma-t,t).
\]
Since \(1-2^{1-\sigma}\sim(\sigma-1)\log2\), the limit is \(2^{t+1}\log2\).
\end{proof}

The denominator in \eqref{eq:dyadic-renorm-w} has the scale lattice
\begin{equation}
w_\ell=1+\frac{2\pi i\ell}{\log2},
\qquad \ell\in\mathbb Z.
\label{eq:scale-lattice}
\end{equation}
This is the exact analytic footprint of doubling the index.

\begin{corollary}[Continuation criterion from the scale defect]
\label{cor:scale-continuation}
Fix \(t_0\in\C\) and a lattice point \(w_\ell\) from \eqref{eq:scale-lattice}.  If the function
\[
(w,t)\longmapsto\mathscr R(w-t,t)
\]
admits a holomorphic continuation to a neighborhood of \((w_\ell,t_0)\), then
\(Z_{\Qp}(w-t,t)\) admits a meromorphic continuation there, with at most a simple pole along \(w=w_\ell\).  Its residue in the \(w\)-variable is
\begin{equation}
\operatorname*{Res}_{w=w_\ell}Z_{\Qp}(w-t,t)
=\frac{2^{-w_\ell}\mathscr R(w_\ell-t,t)}{\log2}.
\label{eq:scale-residue}
\end{equation}
At \(w_0=1\), the right-boundary value \eqref{eq:R-critical-limit} gives the expected residue \(2^t\).
\end{corollary}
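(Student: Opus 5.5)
The plan is to solve the exact renormalization identity \eqref{eq:dyadic-renorm-w} for \(Z_{\Qp}\) and to read the continuation off the explicit denominator.  For \(\RePart w>1\) and \(w\) off the lattice \eqref{eq:scale-lattice},
\[
Z_{\Qp}(w-t,t)=\frac{2^{-w}\mathscr R(w-t,t)}{1-2^{1-w}} .
\]
Suppose \((w,t)\mapsto\mathscr R(w-t,t)\) continues holomorphically to a polydisc \(U\ni(w_\ell,t_0)\).  Then the right-hand side is a ratio of two holomorphic functions on \(U\).  Shrinking \(U\), we may assume that the only zero of \(1-2^{1-w}\) in its \(w\)-projection is \(w_\ell\).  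Since the right-hand side agrees with \(Z_{\Qp}\) on the open set \(U\cap\{\RePart w>1\}\), it is the meromorphic continuation, by the identity theorem.

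The next step is to identify the order of the denominator's zero.  The function \(g(w)=1-2^{1-w}\) satisfies \(g(w_\ell)=1-2\,e^{-2\pi i\ell}\cdot2^{-1}=0\).  Its derivative is \(g'(w)=(\log 2)\,2^{1-w}\), so \(g'(w_\ell)=\log 2\neq0\) and the zero is simple.  Hence \(g(w)=(w-w_\ell)h(w)\) with \(h\) holomorphic and nonvanishing near \(w_\ell\), and \(h(w_\ell)=\log2\).  It follows that
\[
(w-w_\ell)\,Z_{\Qp}(w-t,t)=\frac{2^{-w}\mathscr R(w-t,t)}{h(w)}
\]
is holomorphic on \(U\).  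So the polar divisor is contained in the hyperplane \(\{w=w_\ell\}\) with multiplicity at most one.

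For fixed \(t\), the residue in \(w\) is the value of this holomorphic function at \(w=w_\ell\).  Substituting \(h(w_\ell)=\log2\) gives \(2^{-w_\ell}\mathscr R(w_\ell-t,t)/\log2\), which is \eqref{eq:scale-residue}.

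For \(\ell=0\), I would interpret \(\mathscr R(1-t,t)\) as the boundary limit from the right.  This is legitimate because, under the hypothesis, the continued function is continuous at \((1,t)\).  That limit agrees with the value \(2^{t+1}\log2\) supplied by \eqref{eq:R-critical-limit}.  The residue is then \(2^{-1}\cdot2^{t+1}\log2/\log2=2^t\).  This is consistent with \cref{thm:abelian-principal}.

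I do not expect a real obstacle here.  The only point needing care is the \(w_0\) claim.  There the hypothesis of holomorphic continuation has to be combined with the one-sided limit \eqref{eq:R-critical-limit}: the continued value at \(w=1\) must coincide with the limit along real \(\sigma\downarrow1\).  This holds because a holomorphic continuation is continuous up to and across the line \(\RePart w=1\).
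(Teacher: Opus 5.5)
Your proposal is correct and takes the same route the paper intends: the corollary is stated without a separate proof, as an immediate consequence of \eqref{eq:dyadic-renorm-w}. Like the paper, you divide by \(1-2^{1-w}\), whose zero at \(w_\ell\) is simple with derivative \(\log 2\), and at \(w_0=1\) you identify the continued value \(\mathscr R(1-t,t)\) with the one-sided limit \eqref{eq:R-critical-limit}, which gives the residue \(2^t\).
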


This criterion does not prove continuation by itself.  It does, however, reduce the problem to a transform of the \emph{failure of exact dyadic covariance}; the universal scale denominator has been completely separated.

\section{The exceptional slice \texorpdfstring{$t=-1$}{t=-1}: an unconditional continuation}

The slice \(t=-1\) is exceptional because the normalized correction becomes the ordinary Dirichlet series of the centered orbit error.  The exact arch structure then reveals a cancellation that is invisible in the pointwise estimate \eqref{eq:cloitre-asymptotic}.

At \(t=-1\),
\begin{equation}
\Zn(w,-1)
=2Z_{\Qp}(w+1,-1)
=\sum_{n\ge1}\frac{2\Qp(n)}{n^{w+1}},
\label{eq:tminus1-Z}
\end{equation}
and therefore
\begin{equation}
{
\Hz(w,-1)
=\sum_{n\ge1}\frac{2\Qp(n)-n}{n^{w+1}}.
}
\label{eq:tminus1-H}
\end{equation}
Thus this slice is the ordinary Dirichlet series of the centered orbit error
\begin{equation}
E(n):=2\Qp(n)-n.
\label{eq:centered-E}
\end{equation}

\subsection{An exact center-of-mass clock}

Recall the slow odd/even branches
\begin{equation}
U(m):=\frac{\Qp(2m-1)+1}{2},
\qquad
V(m):=\frac{\Qp(2m)+1}{2}.
\label{eq:UV-branches}
\end{equation}
Here and below, \(\Delta f(m):=f(m+1)-f(m)\) denotes the forward
difference.  The exact arch construction was proved by Clo\^{\i}tre
\cite{Cloitre2026} and recalled in the author's frequency-law paper
\cite[Eqs.~(11)--(19)]{Mantovanelli2026Frequency}.  In the present notation
it uses
\begin{equation}
a_r=\frac{2\cdot4^{r+1}+1}{3},
\qquad
\nu_r=2a_r-r-2,
\qquad
v_r=4a_r-r-2.
\label{eq:arch-coordinates}
\end{equation}
We denote the left endpoint by \(\nu_r\) (rather than the more common
\(u_r\)) to avoid confusion with the complex zeta variable.
The following proposition records the structural input in the notation used
here.  In particular, no asymptotic or numerical observation enters these
identities.

\begin{proposition}[Exact arch data]
\label{prop:exact-arch-data}
The positive arch occupies \([\nu_r,v_r]\), and the following negative arch
ends at \(\nu_{r+1}\).  If \(P_r\) and \(N_r\) denote the corresponding
binary increment words, then
\begin{align}
\Delta U(\nu_r+t)&=P_r[t],
&\Delta V(\nu_r+t)&=1-P_r[t]
&& (0\le t<2a_r),
\label{eq:positive-increments}\\
\Delta V(v_r+t)&=N_r[t]
&&& (0\le t<4a_r-3),
\label{eq:negative-V}\\
\Delta U(v_r)&=1,
\label{eq:negative-U-first}\\
\Delta U(v_r+t)&=1-N_r[t-1]
&&& (1\le t<4a_r-3).
\label{eq:negative-U}
\end{align}
At the arch endpoints,
\begin{equation}
U(\nu_r)=V(\nu_r)=a_r,
\qquad
U(v_r)=V(v_r)=2a_r,
\qquad
U(\nu_{r+1})=V(\nu_{r+1})=a_{r+1}.
\label{eq:arch-endpoints}
\end{equation}
\end{proposition}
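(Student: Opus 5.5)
The statement is a translation of Clo\^{\i}tre's arch-and-forest description \cite{Cloitre2026}, as recalled in \cite[Eqs.~(11)--(19)]{Mantovanelli2026Frequency}, into the present notation. I would therefore prove it by matching conventions and checking consistency, not by re-deriving the orbit.

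\textbf{Step 1: the change of variables.} First I would record the dictionary. Since every value of \(\Qp\) is odd, \(U\) and \(V\) are integer-valued. Because \(\Qp(2m+1)-\Qp(2m-1)=2\Delta U(m)\), increments of \(\Qp\) along odd or even indices are even. Clo\^{\i}tre's statement that along each arch the two parity branches advance in complementary fashion, one by \(2\) and the other by \(0\), then becomes exactly \(\Delta U,\Delta V\in\{0,1\}\). This gives \(\Delta U+\Delta V=1\) on the positive arch, which is \eqref{eq:positive-increments}, with \(P_r\) defined as the \(U\)-increment word.

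\textbf{Step 2: the negative arch.} On the negative arch I would define \(N_r\) as the \(V\)-increment word, which gives \eqref{eq:negative-V}. The cited description says the odd branch mirrors the even one with a lag of one step. That lag is \eqref{eq:negative-U}, and the initial \(\Delta U(v_r)=1\) in \eqref{eq:negative-U-first} is the boundary step. I would match indices carefully here: \(v_r=4a_r-r-2\) and \(\nu_{r+1}=2a_{r+1}-r-3\). Using \(a_{r+1}=4a_r-1\), this gives \(\nu_{r+1}-v_r=4a_r-3\), which matches the range \(0\le t<4a_r-3\).

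\textbf{Step 3: endpoint values.} I would check \eqref{eq:arch-endpoints} by counting increments. Starting from \(U(\nu_r)=V(\nu_r)=a_r\), taken from the source, the positive arch has \(2a_r\) steps with total increment \(2a_r\) shared between \(U\) and \(V\). For both to equal \(2a_r\) at \(v_r\), each branch must gain exactly \(a_r\), so \(P_r\) must contain \(a_r\) ones. On the negative arch, \(V\) must gain \(a_{r+1}-2a_r=2a_r-1\), so \(N_r\) must contain \(2a_r-1\) ones among its \(4a_r-3\) letters. Then \(U\) gains
\[
1+(4a_r-4)-\bigl(2a_r-1-N_r[4a_r-4]\bigr)=2a_r-2+N_r[4a_r-4],
\]
which equals \(2a_r-1\) exactly when the last letter of \(N_r\) is \(1\). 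I would verify these letter counts, including the final letter \(N_r[4a_r-4]=1\), from the cited word construction, and confirm the base case \(r=0\) directly from \eqref{eq:perturbed}: there \(a_0=3\), \(\nu_0=4\), \(v_0=10\).

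\textbf{Main obstacle.} The hard part is the bookkeeping of offsets, namely the one-step lag and the boundary letter in \eqref{eq:negative-U}, when converting Clo\^{\i}tre's indexing to \(U,V\). I would resolve it using the counting identities above together with the explicit case \(r=0\).
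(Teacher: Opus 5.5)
Your proposal is correct and takes essentially the same route as the paper. The paper's proof likewise takes the proposition directly from Clo\^{\i}tre's positive/negative arch decomposition, transcribes it to the slow branches \(U,V\), and reads off the range lengths \(v_r-\nu_r=2a_r\) and \(\nu_{r+1}-v_r=4a_r-3\) from the arch coordinates. Your extra counting checks (\(a_r\) ones in \(P_r\), and \(2a_r-1\) ones in \(N_r\) with final letter \(1\)) are consistent bookkeeping, and direct computation at \(r=0\) from the recursion confirms them with \(P_0=001011\) and \(N_0=010011011\).
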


\begin{proof}
This is Clo\^{\i}tre's exact positive/negative arch decomposition
\cite{Cloitre2026}, recalled in
\cite[Eqs.~(11)--(19)]{Mantovanelli2026Frequency}, written for the slow
branches \eqref{eq:UV-branches} and with the left endpoint denoted by
\(\nu_r\).
The cited construction gives the four increment identities together with
the three endpoint values; the lengths in the displayed ranges follow at
once from \eqref{eq:arch-coordinates}.  We have stated the complete input
needed below so that every subsequent use can be traced to a displayed
identity.
\end{proof}

Define the center-of-mass clock
\begin{equation}
C(m):=U(m)+V(m)-m.
\label{eq:C-clock}
\end{equation}

\begin{theorem}[Exact center-of-mass clock]
\label{thm:center-clock}
For every \(r\ge0\),
\begin{equation}
C(m)=r+2
\qquad(\nu_r\le m\le v_r).
\label{eq:C-positive}
\end{equation}
Put \(L_r:=4a_r-3=\nu_{r+1}-v_r\).  Then
\begin{equation}
C(v_r)=r+2,
\qquad
C(v_r+t)=r+2+N_r[t-1]
\quad(1\le t\le L_r).
\label{eq:C-negative}
\end{equation}
In particular, throughout the complete \(r\)-th positive/negative arch pair,
\begin{equation}
C(m)\in\{r+2,r+3\}.
\label{eq:C-two-values}
\end{equation}
\end{theorem}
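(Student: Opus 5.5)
The plan is to telescope \(C\) along the arches: compute the endpoint values from \eqref{eq:arch-endpoints}, then propagate them using the forward difference \(\Delta C(m)=\Delta U(m)+\Delta V(m)-1\) together with the increment identities of \cref{prop:exact-arch-data}. No asymptotic input is needed; everything follows from \eqref{eq:arch-coordinates}--\eqref{eq:arch-endpoints}.

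The first step is the endpoint values. From \eqref{eq:arch-endpoints} and \(\nu_r=2a_r-r-2\) we get \(C(\nu_r)=2a_r-\nu_r=r+2\). Since \(v_r=4a_r-r-2\), we also get \(C(v_r)=4a_r-v_r=r+2\). Finally \(C(\nu_{r+1})=2a_{r+1}-\nu_{r+1}=r+3\).

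It is worth checking consistency with \(L_r\). From \(a_{r+1}=4a_r-1\) we have \(\nu_{r+1}=2a_{r+1}-r-3=8a_r-r-5\), so \(\nu_{r+1}-v_r=4a_r-3=L_r\), as claimed. Likewise \(v_r-\nu_r=2a_r\), which matches the range \(0\le t<2a_r\) in \eqref{eq:positive-increments}.

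On the positive arch, \eqref{eq:positive-increments} gives \(\Delta C(\nu_r+t)=P_r[t]+(1-P_r[t])-1=0\) for \(0\le t<2a_r\). Hence \(C\) is constant on \([\nu_r,v_r]\) and equals \(r+2\), which is \eqref{eq:C-positive}.

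On the negative arch there are two cases.

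\emph{At \(t=0\).} By \eqref{eq:negative-V} and \eqref{eq:negative-U-first}, \(\Delta C(v_r)=1+N_r[0]-1=N_r[0]\). Therefore \(C(v_r+1)=r+2+N_r[0]\).

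\emph{For \(1\le t<L_r\).} By \eqref{eq:negative-V} and \eqref{eq:negative-U}, \(\Delta C(v_r+t)=(1-N_r[t-1])+N_r[t]-1=N_r[t]-N_r[t-1]\). This telescopes inductively to \(C(v_r+t+1)=r+2+N_r[t]\). Together with the \(t=0\) case, this gives \(C(v_r+t)=r+2+N_r[t-1]\) for \(1\le t\le L_r\).

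The case \(t=L_r\) uses the letter \(N_r[L_r-1]\), which is the last letter of the word. Comparing with the endpoint value \(C(\nu_{r+1})=r+3\) forces \(N_r[L_r-1]=1\), so the formula is consistent at the right end. Since the \(N_r[\cdot]\) are binary letters, \eqref{eq:C-two-values} follows immediately.

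The main delicate point is the off-by-one bookkeeping in the negative arch. The shift in \eqref{eq:negative-U} must pair \(\Delta U(v_r+t)\) with \(N_r[t-1]\), so that the differences telescope to \(N_r[t]-N_r[t-1]\). The special first step \eqref{eq:negative-U-first} has to be handled separately. I would verify this indexing carefully against the cited ranges \(0\le t<4a_r-3\), and confirm that \(L_r=\nu_{r+1}-v_r\) follows from \eqref{eq:arch-coordinates}.
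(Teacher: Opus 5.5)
Your proposal is correct and is essentially the paper's proof. Both show that \eqref{eq:positive-increments} gives \(\Delta C\equiv0\) on the positive arch, compute \(\Delta C(v_r)=N_r[0]\) and \(\Delta C(v_r+t)=N_r[t]-N_r[t-1]\) on the negative arch, and telescope from \(C(v_r)=r+2\), with \(C(\nu_{r+1})=r+3\) matching the final bit \(N_r[L_r-1]=1\). Your additional verification of \(\nu_{r+1}-v_r=4a_r-3\) and \(v_r-\nu_r=2a_r\) via \(a_{r+1}=4a_r-1\) is correct and makes explicit bookkeeping that the paper leaves to the reader.
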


\begin{proof}
From \eqref{eq:positive-increments},
\[
\Delta C(\nu_r+t)
=\Delta U(\nu_r+t)+\Delta V(\nu_r+t)-1=0.
\]
Since \eqref{eq:arch-endpoints} and \eqref{eq:arch-coordinates} give
\[
C(\nu_r)=2a_r-\nu_r=r+2,
\]
this proves \eqref{eq:C-positive}.

On the negative arch, \eqref{eq:negative-V}--\eqref{eq:negative-U} give
\[
\Delta C(v_r)=N_r[0]
\]
and, for \(1\le t<L_r\),
\[
\Delta C(v_r+t)=N_r[t]-N_r[t-1].
\]
Telescoping from \(C(v_r)=r+2\) yields \eqref{eq:C-negative}.  The endpoint formula
\(C(\nu_{r+1})=r+3\) is consistent with the final bit \(N_r[L_r-1]=1\).
\end{proof}

The centered error has a particularly simple pair sum.

\begin{corollary}[Paired centered error]
\label{cor:paired-error}
For every \(m\ge1\),
\begin{equation}
{
E(2m-1)+E(2m)=4C(m)-3.
}
\label{eq:paired-error}
\end{equation}
Consequently,
\begin{equation}
E(2m-1)+E(2m)=2\log_2m+O(1)
\qquad(m\to\infty).
\label{eq:paired-error-asymptotic}
\end{equation}
\end{corollary}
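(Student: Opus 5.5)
Both parts of \cref{cor:paired-error} reduce to the definitions \eqref{eq:UV-branches}, \eqref{eq:centered-E} and \eqref{eq:C-clock}, together with \cref{thm:center-clock}.

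For the exact identity \eqref{eq:paired-error}, invert the definitions of the slow branches:
\[
\Qp(2m-1)=2U(m)-1,\qquad \Qp(2m)=2V(m)-1.
\]
Then
\[
E(2m-1)+E(2m)=2\Qp(2m-1)+2\Qp(2m)-(4m-1)=4U(m)+4V(m)-4-4m+1,
\]
which equals \(4\bigl(U(m)+V(m)-m\bigr)-3=4C(m)-3\). No arch information is used here. This holds for all \(m\ge1\), since \(2m-1\ge1\) and \(\Qp\) is defined at every positive index.

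For the asymptotic \eqref{eq:paired-error-asymptotic}, I would use \eqref{eq:C-two-values}. For \(m\) in the \(r\)-th positive/negative arch pair \([\nu_r,\nu_{r+1}]\), we have \(C(m)=r+O(1)\). It then remains to check that \(r=\tfrac12\log_2 m+O(1)\) on this range. From \eqref{eq:arch-coordinates},
\[
a_r=\frac{2\cdot4^{r+1}+1}{3}\asymp4^r,\qquad \nu_r=2a_r-r-2\asymp4^r,\qquad \nu_{r+1}\asymp4^{r+1}.
\]
Hence \(\log_2 m=2r+O(1)\) uniformly for \(m\in[\nu_r,\nu_{r+1}]\). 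Together with \eqref{eq:paired-error} this gives
\[
E(2m-1)+E(2m)=4r+O(1)=2\log_2 m+O(1).
\]

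The only point needing care is that the intervals \([\nu_r,\nu_{r+1}]\) for \(r\ge0\) cover all sufficiently large \(m\). This holds because the positive arch \([\nu_r,v_r]\) and the following negative arch \([v_r,\nu_{r+1}]\) are contiguous by \cref{prop:exact-arch-data} and \cref{thm:center-clock}. The finitely many \(m<\nu_0\) only affect the implied constant.
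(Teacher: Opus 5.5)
Your proposal is correct and follows the paper's proof essentially step for step. You invert the slow-branch definitions to get the exact identity \(E(2m-1)+E(2m)=4C(m)-3\), then use the two-valued clock on each arch pair together with \(\nu_r\asymp 4^r\) to write \(r=\tfrac12\log_2 m+O(1)\).
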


\begin{proof}
Using \eqref{eq:UV-branches},
\[
E(2m-1)=4U(m)-2m-1,
\qquad
E(2m)=4V(m)-2m-2,
\]
which gives \eqref{eq:paired-error}.  If \(\nu_r\le m<\nu_{r+1}\), then \cref{thm:center-clock} gives \(C(m)=r+O(1)\), while \eqref{eq:arch-coordinates} gives \(m\asymp4^r\).  Hence
\(r=\tfrac12\log_2m+O(1)\), proving \eqref{eq:paired-error-asymptotic}.
\end{proof}

The clock is sufficiently explicit to give closed cycle sums.  Since
\[
L_r=4a_r-3,
\qquad
\sum_{t=0}^{L_r-1}N_r[t]
=V(\nu_{r+1})-V(v_r)
=2a_r-1,
\]
and the last bit is one, the number of ones among \(N_r[0],\ldots,N_r[L_r-2]\) is \(2a_r-2\).  Therefore
\begin{equation}
\sum_{m=\nu_r}^{\nu_{r+1}-1}C(m)
=(6a_r-3)(r+2)+2a_r-2.
\label{eq:C-cycle-sum}
\end{equation}
Combining this with \eqref{eq:paired-error} gives the exact pair-error mass
\begin{equation}
{
\sum_{m=\nu_r}^{\nu_{r+1}-1}
\bigl(E(2m-1)+E(2m)\bigr)
=(6a_r-3)(4r+5)+8a_r-8.
}
\label{eq:E-cycle-sum}
\end{equation}
Since \(6a_r-3=16\cdot4^r-1\), this is an explicit affine-polynomial function of \(r\) times \(4^r\).

\subsection{A summatory theorem}

Put
\begin{equation}
A(X):=\sum_{n\le X}E(n).
\label{eq:AX}
\end{equation}

\begin{theorem}[Summatory centered-error law]
\label{thm:centered-error-asymptotic}
As \(X\to\infty\),
\begin{equation}
{
A(X)=X\log_2X+O(X).
}
\label{eq:A-asymptotic}
\end{equation}
Moreover, at the exact arch endpoints
\begin{equation}
X_R:=2\nu_R-2,
\label{eq:XR}
\end{equation}
one has, for every \(R\ge1\),
\begin{equation}
{
A(X_R)
=\frac{16}{3}(4R+1)4^R
-2R^2-\frac{25}{3}R-\frac73.
}
\label{eq:A-exact-endpoint}
\end{equation}
Consequently,
\begin{equation}
\frac{A(X_R)-X_R\log_2X_R}{X_R}
\longrightarrow
\frac12-\log_2\frac{32}{3}.
\label{eq:A-second-term-subsequence}
\end{equation}
\end{theorem}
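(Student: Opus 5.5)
The plan is to reduce everything to the exact pair identity of \cref{cor:paired-error} and the closed cycle mass \eqref{eq:E-cycle-sum}.

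\textbf{Endpoint formula.} For $X=X_R=2\nu_R-2=2(\nu_R-1)$ the sum $A(X_R)$ runs over complete pairs $\{2m-1,2m\}$ with $1\le m\le \nu_R-1$. Hence
\[
A(X_R)=\sum_{m=1}^{\nu_0-1}\bigl(E(2m-1)+E(2m)\bigr)+\sum_{r=0}^{R-1}\sum_{m=\nu_r}^{\nu_{r+1}-1}\bigl(E(2m-1)+E(2m)\bigr).
\]
\eqref{eq:arch-coordinates} gives $a_0=3$ and $\nu_0=4$, so the initial block is $m=1,2,3$, i.e.\ $n\le6$. This block is a finite constant, which I would compute from the first six values of \eqref{eq:perturbed}; alternatively one can use $4C(m)-3$ there if the clock is valid that far down.

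Each cycle contributes $(16\cdot4^r-1)(4r+5)+8a_r-8$ by \eqref{eq:E-cycle-sum}, with $8a_r=\tfrac{16}{3}(4^{r+1})\cdot\tfrac{1}{1}\cdot\tfrac12\cdot 2+\tfrac83$, i.e.\ $8a_r=\tfrac{64\cdot4^r+8}{3}$. Summing over $0\le r<R$ requires only the elementary sums
\[
\sum_{r<R}4^r=\frac{4^R-1}{3},\qquad \sum_{r<R}r4^r=\frac{(3R-4)4^R+4}{9},
\]
together with $\sum_{r<R}(4r+5)$ and constant sums. Collecting the $4^R$, $R4^R$, $R^2$, $R$ and constant terms, and adding the initial constant, should reproduce \eqref{eq:A-exact-endpoint}. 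The only real risk here is bookkeeping: getting the initial block and the constant $-\tfrac73$ to match. I would check the resulting formula at $R=1$ directly against the sequence.

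\textbf{Global asymptotic.} For general $X$, write $A(X)=\sum_{m\le \lfloor X/2\rfloor}\bigl(E(2m-1)+E(2m)\bigr)+\delta(X)$. The term $\delta(X)$ is either zero or a single value $E(n)$ with $n\le X$. By \eqref{eq:cloitre-asymptotic}, $|E(n)|=O(X/\sqrt{\log X})=O(X)$.

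By \eqref{eq:paired-error-asymptotic}, the pair sum equals
\[
\sum_{m\le X/2}\bigl(2\log_2 m+O(1)\bigr)=2\cdot\tfrac X2\log_2\tfrac X2+O(X)=X\log_2X+O(X).
\]
This is Stirling, or comparison with $\int\log_2$. It gives \eqref{eq:A-asymptotic}.

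\textbf{Subsequence limit.} From \eqref{eq:arch-coordinates}, $X_R=4a_R-2R-6=\tfrac{32}{3}4^R+O(R)$, so
\[
\log_2X_R=2R+\log_2\tfrac{32}{3}+O(R4^{-R}).
\]
By \eqref{eq:A-exact-endpoint}, $A(X_R)/X_R=\tfrac{16}{3}(4R+1)\cdot\tfrac{3}{32}+o(1)=2R+\tfrac12+o(1)$. Subtracting $\log_2 X_R$ gives the limit $\tfrac12-\log_2\tfrac{32}{3}$ in \eqref{eq:A-second-term-subsequence}. The error terms are $O(R^2 4^{-R})$, which is harmless.
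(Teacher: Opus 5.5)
Your proposal is correct and follows the paper's proof essentially step for step: you use the pair decomposition with the cycle mass \eqref{eq:E-cycle-sum} and the two geometric sums for the endpoint formula, Stirling plus the pointwise bound for the unmatched odd term, and direct division using $X_R=\tfrac{32}{3}4^R+O(R)$. The initial block you left to compute is $E(1)+\cdots+E(6)=1+1+1=3$, since each of the first three pair sums equals one; this is exactly what turns the constant $-\tfrac{16}{3}$ produced by the cycle sums into $-\tfrac73$. Your garbled line for $8a_r$ does resolve to the correct value $(64\cdot4^r+8)/3$.
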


\begin{proof}
By \eqref{eq:paired-error-asymptotic},
\[
A(2M)
=\sum_{m\le M}\bigl(E(2m-1)+E(2m)\bigr)
=2\sum_{m\le M}\log_2m+O(M).
\]
Stirling's formula gives
\[
2\sum_{m\le M}\log_2m
=2M\log_2M+O(M),
\]
and hence
\[
A(2M)=2M\log_2(2M)+O(M).
\]
For an odd endpoint, the unmatched term \(E(2M+1)\) is \(O(M)\) by \eqref{eq:cloitre-asymptotic}; this proves \eqref{eq:A-asymptotic}.

For the exact formula, note that the first three pair sums equal one, so their total is three.  The cycles \(r=0,\ldots,R-1\) cover all \(m\) from \(4=\nu_0\) through \(\nu_R-1\).  Summing \eqref{eq:E-cycle-sum} and using
\[
\sum_{r=0}^{R-1}4^r=\frac{4^R-1}{3},
\qquad
\sum_{r=0}^{R-1}r4^r
=\frac{4+(3R-4)4^R}{9},
\]
gives \eqref{eq:A-exact-endpoint}.  Finally, \(X_R=(32/3)4^R+O(R)\), and \eqref{eq:A-second-term-subsequence} follows by direct division.
\end{proof}

\Cref{fig:centered-error} illustrates the convergence established by the
theorem.

\begin{figure}[htbp]
\centering
\includegraphics[width=0.88\textwidth]{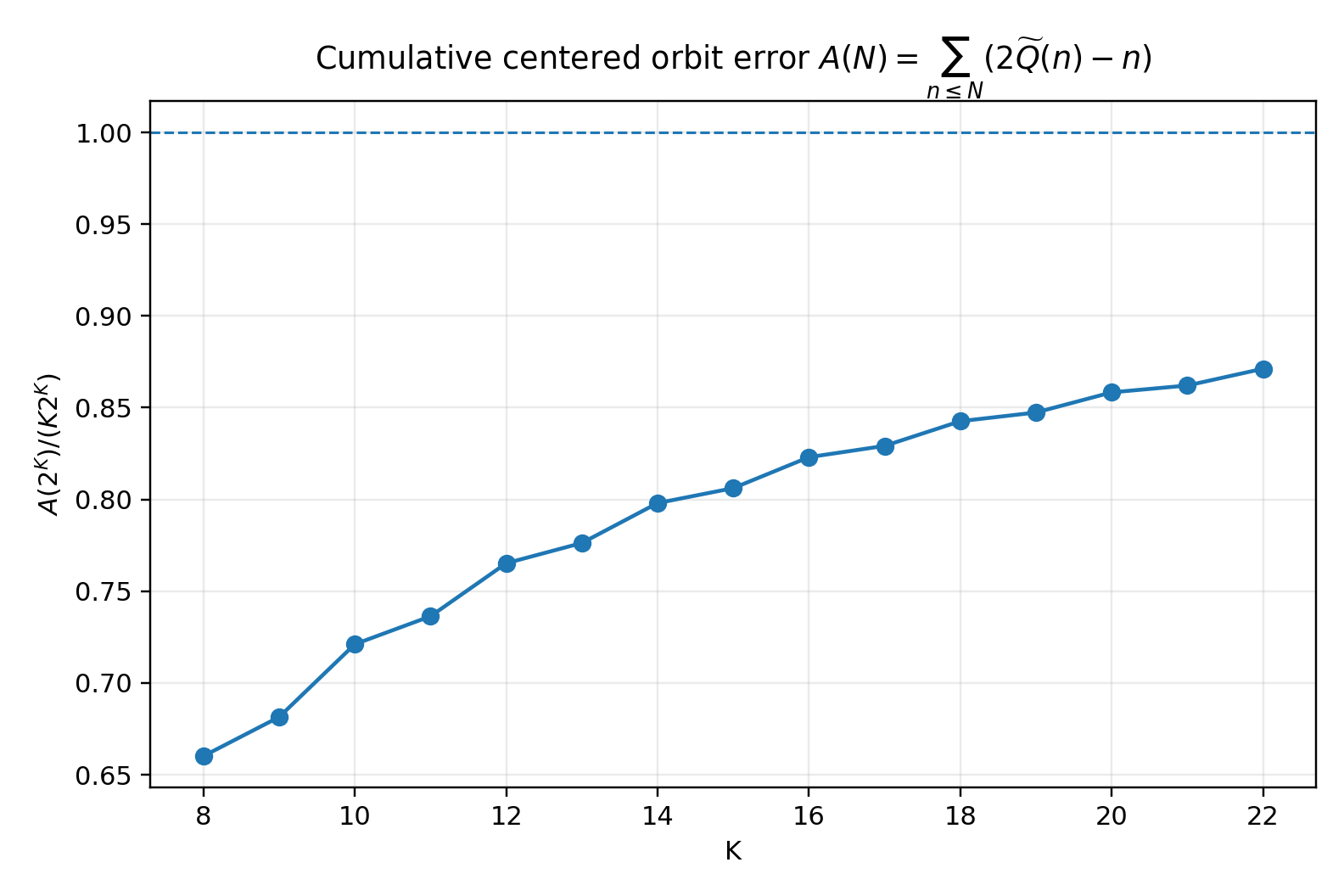}
\caption{The normalized cumulative centered error at dyadic endpoints.  The
dashed line marks the limiting value \(1\) in \eqref{eq:A-asymptotic}.}
\label{fig:centered-error}
\end{figure}

\subsection{Analytic continuation of the linear slice}

The summatory theorem immediately crosses the original convergence boundary.

\begin{theorem}[Unconditional continuation of the \(t=-1\) slice]
\label{thm:tminus1-continuation}
The series \eqref{eq:tminus1-H} converges locally uniformly and defines a holomorphic function in
\begin{equation}
{\RePart w>0.}
\label{eq:tminus1-strip}
\end{equation}
Consequently,
\begin{equation}
{
Z_{\Qp}(s,-1)
=\frac12\zeta(s-1)+\frac12\Hz(s-1,-1)
}
\label{eq:tminus1-decomp}
\end{equation}
admits a meromorphic continuation from its original half-plane \(\RePart s>2\) to the larger half-plane
\begin{equation}
{\RePart s>1.}
\label{eq:tminus1-s-strip}
\end{equation}
Within this half-plane, the only forced singularity from the zeta term is the simple pole at \(s=2\).
\end{theorem}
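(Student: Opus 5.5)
The plan is to run Abel (partial) summation on the series \eqref{eq:tminus1-H}, using only the summatory law \eqref{eq:A-asymptotic} of \cref{thm:centered-error-asymptotic}. The coefficients of \(\Hz(w,-1)\) are \(E(n)\), and the Dirichlet exponent is \(w+1\). For a finite cutoff \(X\), partial summation gives
\[
\sum_{n\le X}\frac{E(n)}{n^{w+1}}
=\frac{A(X)}{X^{w+1}}+(w+1)\int_1^X\frac{A(x)}{x^{w+2}}\,dx .
\]
By \eqref{eq:A-asymptotic}, \(|A(x)|\ll x\log(x+1)\) for all \(x\ge1\). Hence the boundary term is \(O(X^{-\RePart w}\log X)\), which tends to zero whenever \(\RePart w>0\). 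The integrand is \(O(x^{-1-\RePart w}\log x)\), which is integrable on \([1,\infty)\) for \(\RePart w>0\).

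On a compact set \(K\subset\{\RePart w>0\}\) put \(\delta=\min_K\RePart w>0\). All these bounds hold uniformly in terms of \(\delta\), so the partial sums converge uniformly on \(K\) to
\[
(w+1)\int_1^\infty A(x)x^{-w-2}\,dx .
\]
This limit is holomorphic, since it is a locally uniform limit of the holomorphic Dirichlet polynomials; alternatively, holomorphy follows by differentiating under the integral sign. This establishes convergence and holomorphy in \eqref{eq:tminus1-strip}.

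The statement claims that the series itself converges, not merely that it has a continuation. This is exactly what the partial-summation identity gives, because the boundary term vanishes. Note that convergence is not absolute, since \(|E(n)|\) can be of size \(n/\sqrt{\log n}\).

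For the identity \eqref{eq:tminus1-decomp}, I would start from \eqref{eq:tminus1-Z} with \(w=s-1\):
\[
2Z_{\Qp}(s,-1)=\Zn(s-1,-1)=\zeta(s-1)+\Hz(s-1,-1),
\]
valid for \(\RePart s>2\) by \eqref{eq:normalized-zeta-decomp}. The right-hand side is meromorphic on \(\RePart s>1\). Indeed, \(\zeta(s-1)\) is meromorphic everywhere with its only pole at \(s=2\), and \(\Hz(s-1,-1)\) is holomorphic on \(\RePart s>1\) by the first part. The identity theorem then gives the meromorphic continuation of \(Z_{\Qp}(s,-1)\), and the only singularity inherited from the zeta term is the simple pole at \(s=2\), with residue \(1/2\).

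The only step needing any care is the uniformity of the estimate \(A(x)\ll x\log x\) on all of \([1,\infty)\), rather than only asymptotically. This is trivial, since \(A\) is bounded on bounded intervals. Beyond that, no real obstacle remains: the substantive input is \eqref{eq:A-asymptotic} itself, which is already proved.
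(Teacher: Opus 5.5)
Your proposal is correct and is essentially the paper's own argument: partial summation against $x^{-w-1}$, the bound $A(x)=O(x\log x)$ from \cref{thm:centered-error-asymptotic} killing the boundary term and giving locally uniform convergence of $(w+1)\int_1^\infty A(x)x^{-w-2}\,dx$ for $\RePart w>0$, and then \eqref{eq:tminus1-decomp} read off from \eqref{eq:normalized-zeta-decomp} at $t=-1$. You make the uniformity on compacts and the residue $1/2$ at $s=2$ more explicit than the paper does; your aside on non-absolute convergence is not needed for the theorem, and as phrased it would need $|E(n)|$ to be large on average over dyadic blocks, not just for isolated $n$.
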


\begin{proof}
Partial summation gives, initially for \(\RePart w>1\),
\[
\sum_{n\le X}\frac{E(n)}{n^{w+1}}
=A(X)X^{-w-1}
+(w+1)\int_1^X A(x)x^{-w-2}\,dx.
\]
By \cref{thm:centered-error-asymptotic}, \(A(x)=O(x\log x)\).  Hence the boundary term tends to zero and the integral converges locally uniformly for every \(\RePart w>0\), proving the continuation.
\end{proof}

The same asymptotic also identifies the next boundary scale.  Define
\begin{equation}
r(n):=E(n)-\frac{\log n+1}{\log2}.
\label{eq:r-residual}
\end{equation}
By Stirling's formula and \eqref{eq:A-asymptotic},
\begin{equation}
\sum_{n\le X}r(n)=O(X).
\label{eq:r-partial-sums}
\end{equation}
Therefore, for \(\RePart w>0\),
\begin{equation}
{
\Hz(w,-1)
=\frac{-\zeta'(w+1)+\zeta(w+1)}{\log2}
+R_*(w),
}
\label{eq:tminus1-zeta-derivative}
\end{equation}
where
\begin{equation}
R_*(w):=\sum_{n\ge1}\frac{r(n)}{n^{w+1}}
\label{eq:Rstar}
\end{equation}
converges locally uniformly in \(\RePart w>0\).  Using the Laurent expansion
of \(\zeta\) at \(1\) \cite{Titchmarsh1986}, we obtain, on the positive real
axis,
\begin{equation}
{
\Hz(w,-1)
=\frac{1}{\log2}\frac1{w^2}+O\!\left(\frac1w\right)
\qquad(w\downarrow0).
}
\label{eq:tminus1-double-boundary}
\end{equation}
Thus the first nontrivial continued slice already exhibits a second layer of zeta structure: after the pole at \(s=2\), the approach to the next boundary \(s=1\) is governed by a zeta derivative.

Equation \eqref{eq:A-second-term-subsequence} shows that a nontrivial order-$X$ second term remains after the smooth term $X\log_2X$.  The next subsections identify this discrete-scale term explicitly, compute its Fourier spectrum, and determine the boundary resonance that it forces on $\RePart w=0$.

\subsection{The explicit log-periodic second term}

The order-$X$ remainder mentioned above can in fact be identified explicitly.
The key point for the even partial sums $A(2M)$ is that the positive arch
has a constant center-of-mass clock, while on the negative arch the
remaining fluctuation is a prefix imbalance of the binary word $N_r$.

For $0\le t\le L_r$ define
\begin{equation}
\mathscr D_r(t)
:=2\sum_{j=0}^{t-1}N_r[j]-t,
\qquad \mathscr D_r(0):=0.
\label{eq:negative-prefix-imbalance}
\end{equation}
Thus $\mathscr D_r(t)$ is the excess of ones over zeros in the first $t$
bits of the negative-arch word.

\begin{proposition}[Exact local second-order skeleton]
\label{prop:exact-local-skeleton}
For every $r\ge0$ the following identities hold.
If $\nu_r\le M\le v_r$, then
\begin{equation}
{
A(2M)
=(4r+5)M-\frac{64}{3}4^r
+\frac{2}{3}(3r^2+9r+14).
}
\label{eq:positive-local-skeleton}
\end{equation}
If $M=v_r+t$ with $0\le t\le L_r-1$, then
\begin{equation}
{
A(2M)
=(4r+7)M-\frac{128}{3}4^r
+\frac{2}{3}(3r^2+12r+16)
+2\mathscr D_r(t).
}
\label{eq:negative-local-skeleton}
\end{equation}
Moreover,
\begin{equation}
{
\mathscr D_r(t)
=\Qp\!\left(2(v_r+t)\right)-\Qp(2v_r)-t.
}
\label{eq:prefix-imbalance-Q}
\end{equation}
Consequently, as $r\to\infty$, uniformly for $0\le t\le L_r$,
\begin{equation}
\mathscr D_r(t)
=O\!\left(\frac{4^r}{\sqrt r}\right).
\label{eq:prefix-imbalance-bound}
\end{equation}
\end{proposition}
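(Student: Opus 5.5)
The plan is to write $A(2M)=\sum_{m\le M}(4C(m)-3)$ using \cref{cor:paired-error}, and then evaluate the sum of the clock $C$ exactly with \cref{thm:center-clock}.

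\textbf{Anchor value.} Set $S(M):=\sum_{m\le M}C(m)$. At the left endpoint $M=\nu_r-1$ the value is already available: the proof of \cref{thm:centered-error-asymptotic} gives $A(2\nu_r-2)=A(X_r)$ in closed form \eqref{eq:A-exact-endpoint}, with $R=r$. For $r=0$ one checks directly that the first three pair sums equal one.

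\textbf{Positive arch.} For $\nu_r\le M\le v_r$, \eqref{eq:C-positive} gives $C(m)=r+2$ on $[\nu_r,M]$. Hence
\[
A(2M)=A(X_r)+(4r+5)(M-\nu_r+1).
\]
Substituting $\nu_r=2a_r-r-2$ with $a_r=(2\cdot4^{r+1}+1)/3$, together with \eqref{eq:A-exact-endpoint}, should collapse to \eqref{eq:positive-local-skeleton}. Concretely, the $4^r$ terms combine as
\[
\tfrac{16}{3}(4r+1)4^r-(4r+5)\tfrac{16}{3}4^r=-\tfrac{64}{3}4^r,
\]
and the remaining terms form a quadratic in $r$. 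This is routine but must be verified carefully.

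\textbf{Negative arch.} For $M=v_r+t$ with $0\le t\le L_r-1$, use \eqref{eq:C-negative}: for $1\le j\le t$ we have $C(v_r+j)=r+2+N_r[j-1]$. Therefore
\[
A(2M)=A(2v_r)+(4r+5)t+4\sum_{j<t}N_r[j].
\]
Rewrite $4\sum_{j<t}N_r[j]=2\mathscr D_r(t)+2t$ using \eqref{eq:negative-prefix-imbalance}. This turns the slope into $4r+7$. Inserting $A(2v_r)$ from the positive formula at $M=v_r$ and using $v_r=\nu_r+2a_r$, the intercepts shift by $-2v_r=-2(4a_r-r-2)$. This should reproduce $-\tfrac{128}{3}4^r$ and the quadratic $\tfrac23(3r^2+12r+16)$.

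\textbf{The identity \eqref{eq:prefix-imbalance-Q}.} By \eqref{eq:negative-V},
\[
\sum_{j<t}N_r[j]=V(v_r+t)-V(v_r),
\]
and $V(m)=(\Qp(2m)+1)/2$. Hence
\[
2\sum_{j<t}N_r[j]-t=\Qp(2(v_r+t))-\Qp(2v_r)-t.
\]

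\textbf{The bound \eqref{eq:prefix-imbalance-bound}.} Write $n=2(v_r+t)$, so that $n\asymp4^r$ and $\log n\asymp r$. Then
\[
\Qp(n)-\Qp(2v_r)-t=\Bigl(\Qp(n)-\tfrac n2\Bigr)-\Bigl(\Qp(2v_r)-v_r\Bigr).
\]
Each bracket is $O(4^r/\sqrt r)$ by \eqref{eq:cloitre-asymptotic}, uniformly in $t$. The case $t=L_r$ is covered in the same way, since the endpoint $\nu_{r+1}$ is still of size $4^r$.

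The main obstacle is only the bookkeeping: the off-by-one at $t=0$ versus $C(v_r)$, and the exact constants. There is no conceptual difficulty.
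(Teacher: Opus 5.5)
Your proposal is correct and follows the paper's own argument essentially step for step. You anchor at the exact endpoint value $A(2\nu_r-2)$ from \eqref{eq:A-exact-endpoint}, add the constant pair sum $4r+5$ across the positive arch, and add $4r+5+4N_r[j-1]$ across the negative arch, rewriting the bit sum as $2\mathscr D_r(t)+2t$. You then obtain \eqref{eq:prefix-imbalance-Q} from \eqref{eq:negative-V} together with $2V(m)=\Qp(2m)+1$, and the bound from \eqref{eq:cloitre-asymptotic}. The bookkeeping you flagged closes up: the quadratics come out as $2r^2+6r+\tfrac{28}{3}$ and $2r^2+8r+\tfrac{32}{3}$, and the endpoint formula also holds at $R=0$, where it gives $A(6)=3$.
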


\begin{proof}
On the positive arch, \cref{thm:center-clock,cor:paired-error} gives
\[
E(2m-1)+E(2m)=4r+5.
\]
Starting from the exact value \eqref{eq:A-exact-endpoint} at
$2\nu_r-2$ and summing from $m=\nu_r$ to $M$ gives
\eqref{eq:positive-local-skeleton} after inserting
$\nu_r=(16\cdot4^r-3r-4)/3$.

On the negative arch,
\[
E(2(v_r+j)-1)+E(2(v_r+j))
=4r+5+4N_r[j-1]
\qquad(j\ge1).
\]
Summing this identity from $v_r$ and using the positive formula at $v_r$
gives \eqref{eq:negative-local-skeleton}.  Finally,
\eqref{eq:negative-V} implies
\[
\sum_{j=0}^{t-1}N_r[j]=V(v_r+t)-V(v_r).
\]
Using $2V(m)=\Qp(2m)+1$ proves \eqref{eq:prefix-imbalance-Q}.  Since
$v_r+t\asymp4^r$ throughout the negative arch, the asymptotic
\eqref{eq:cloitre-asymptotic} gives \eqref{eq:prefix-imbalance-bound}
uniformly in $t$.
\end{proof}

Define a continuous $1$-periodic function by prescribing it on $[0,1]$:
\begin{equation}
{
\Omega(u)
:=\frac32-\log_2\frac{16}{3}-u-2^{1-u},
\qquad 0\le u\le1,
\qquad \Omega(u+1)=\Omega(u).
}
\label{eq:Omega-def}
\end{equation}
The endpoint values agree, so the periodic extension is continuous (and
Lipschitz), although its derivative has a jump at the integers.

\begin{theorem}[Explicit log-periodic second term]
\label{thm:log-periodic-second-term}
As $X\to\infty$,
\begin{equation}
\boxed{
A(X)
=X\log_2X
+X\,\Omega\!\left(\log_2\frac{3X}{32}\right)
+O\!\left(\frac{X}{\sqrt{\log X}}\right).
}
\label{eq:log-periodic-A}
\end{equation}
The error estimate is uniform in the phase of $X$.
\end{theorem}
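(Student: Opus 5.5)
The plan is to read \eqref{eq:log-periodic-A} off the exact local skeleton of \cref{prop:exact-local-skeleton}. First reduce to even arguments. For real $X$ write $X=2M$ or $X=2M+1$ up to a fractional part. The extra term $E(2M+1)$ is $O(X/\sqrt{\log X})$ by \eqref{eq:cloitre-asymptotic}, since $E(n)=2(\Qp(n)-n/2)$. Changing $X$ by $O(1)$ also changes the right-hand side of \eqref{eq:log-periodic-A} by only $O(\log X)$, because $\Omega$ is Lipschitz. It therefore suffices to treat $X=2M$ with $\nu_r\le M<\nu_{r+1}$, where $r\asymp\log X$.

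\emph{Positive arch, $\nu_r\le M\le v_r$.} By \eqref{eq:positive-local-skeleton},
\[
A(2M)=(4r+5)M-\tfrac{64}{3}4^r+O(r^2).
\]
Since $\nu_r=\tfrac{16}{3}4^r+O(r)$ and $v_r=\tfrac{32}{3}4^r+O(r)$, I parametrize $X=2M=\tfrac{32}{3}4^r2^u$ with $u\in[0,1]$ up to $O(r4^{-r})$. Then $\log_2\frac{3X}{32}=2r+u$, so the phase of $\Omega$ is $u$. Dividing by $X$ gives
\[
\frac{A(X)}{X}=2r+\tfrac52-2^{1-u}+O(r^24^{-r}),
\qquad
\log_2X=\log_2\tfrac{32}{3}+2r+u .
\]
Their difference is $\tfrac52-\log_2\tfrac{32}{3}-u-2^{1-u}=\Omega(u)$, using $\log_2\frac{32}{3}=1+\log_2\frac{16}{3}$.

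\emph{Negative arch, $M=v_r+t$ with $0\le t\le L_r-1$.} Here $X$ runs from about $\tfrac{64}{3}4^r$ to $\tfrac{128}{3}4^r$. I write $X=\tfrac{64}{3}4^r2^u$, so the phase is $2r+1+u\equiv u$. Then \eqref{eq:negative-local-skeleton} gives
\[
\frac{A(X)}{X}=2r+\tfrac72-2^{1-u}+\frac{2\mathscr D_r(t)}{X}+O(r^24^{-r}),
\qquad
\log_2X=\log_2\tfrac{32}{3}+2r+1+u .
\]
Apart from the $\mathscr D_r$ term, the difference is again $\Omega(u)$. So both halves of every arch pair produce the same periodic profile, which is why the period in $\log_2X$ is $1$ rather than $2$.

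\emph{Error bookkeeping.} The remaining error comes from three sources:
\begin{itemize}[nosep]
\item the $O(r^2)$ constants in the skeletons;
\item the $O(r4^{-r})$ phase shifts from replacing $\nu_r,v_r$ by their leading terms, which are harmless because $\Omega$ is Lipschitz;
\item the term $2\mathscr D_r(t)$.
\end{itemize}
The last term is $O(4^r/\sqrt r)=O(X/\sqrt{\log X})$ by \eqref{eq:prefix-imbalance-bound}, uniformly in $t$, and it dominates the other two. All bounds are uniform over the arch, which gives uniformity in the phase.

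The main obstacle is the negative-arch prefix imbalance. It is the only nonexplicit piece, and its control rests entirely on Clo\^{\i}tre's pointwise bound via \eqref{eq:prefix-imbalance-Q}. This is exactly why the error term is $X/\sqrt{\log X}$ rather than something smaller. A secondary check is continuity at the arch junctions $M=v_r$ and $M=\nu_{r+1}$. Both skeleton formulas agree there, consistent with $\Omega(0)=\Omega(1)$, so no phase jump appears.
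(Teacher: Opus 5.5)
Your proof is correct and takes essentially the same route as the paper. You reduce to even $X=2M$ and match the exact local skeletons of \cref{prop:exact-local-skeleton} against $X\log_2X+X\Omega(\log_2\tfrac{3X}{32})$ on the positive and negative halves; your parametrizations $2M=\tfrac{32}{3}4^r2^u$ and $2M=\tfrac{64}{3}4^r2^u$ just write out the paper's ``direct substitution''. The remaining steps are also the paper's: absorb the $O(r)$ endpoint offsets using that $\Omega$ is Lipschitz, bound $2\mathscr D_r(t)$ by \eqref{eq:prefix-imbalance-bound}, and handle odd or real $X$ with $E(2M+1)=O(M/\sqrt{\log M})$ together with the $O(\log X)$ variation of the model term.
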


\begin{proof}
First let $X=2M$ and choose $r$ so that
$\nu_r\le M<\nu_{r+1}$.  Put $c=16/3$.
On the ideal positive scale $c4^r\le M\le2c4^r$, direct substitution into
\eqref{eq:Omega-def} gives
\begin{equation}
2M\log_2(2M)
+2M\Omega\!\left(\log_2\frac{3M}{16}\right)
=(4r+5)M-\frac{64}{3}4^r.
\label{eq:Omega-positive-linear}
\end{equation}
On the ideal negative scale $2c4^r\le M\le4c4^r$, periodicity gives
\begin{equation}
2M\log_2(2M)
+2M\Omega\!\left(\log_2\frac{3M}{16}\right)
=(4r+7)M-\frac{128}{3}4^r.
\label{eq:Omega-negative-linear}
\end{equation}
The exact arch endpoints differ from the ideal points
$c4^r,2c4^r,4c4^r$ by only $O(r)$.  Since $\Omega$ is Lipschitz, replacing
the ideal partition by the exact arch partition changes the right-hand sides
of \eqref{eq:Omega-positive-linear}--\eqref{eq:Omega-negative-linear} by
$O(r)$.  The exact identities of \cref{prop:exact-local-skeleton} therefore
give an error $O(r^2)$ on the positive arch and
\[
2\mathscr D_r(t)+O(r^2)
=O\!\left(\frac{4^r}{\sqrt r}\right)
\]
on the negative arch.  Since $X\asymp4^r$ and $r\asymp\log X$, this proves
\eqref{eq:log-periodic-A} for even $X$.

For odd $X=2M+1$, the difference from the preceding even partial sum is
$E(2M+1)=O(M/\sqrt{\log M})$ by \eqref{eq:cloitre-asymptotic}.  The model
function on the right-hand side of \eqref{eq:log-periodic-A} changes by only
$O(\log X)$ over a unit increment because $\Omega$ is Lipschitz.  This is
absorbed by the same error term.
\end{proof}

The theorem refines the first-order law in a directly testable way.
At powers of two the phase is fixed and the periodic value simplifies.

\begin{corollary}[Dyadic second term]
\label{cor:dyadic-second-term}
As $K\to\infty$,
\begin{equation}
{
A(2^K)
=K2^K-\frac{17}{6}2^K
+O\!\left(\frac{2^K}{\sqrt K}\right).
}
\label{eq:dyadic-A-second-term}
\end{equation}
\end{corollary}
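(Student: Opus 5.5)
The plan is to specialize \cref{thm:log-periodic-second-term} to \(X=2^K\) and evaluate the periodic function \(\Omega\) at the resulting phase. No new structural input is needed. Because the error term in \eqref{eq:log-periodic-A} is uniform in the phase of \(X\), the substitution is legitimate along the sequence \(2^K\).

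First, the smooth term is \(X\log_2X=K2^K\). Next, the phase is
\[
\log_2\frac{3\cdot2^K}{32}=K+\log_2 3-5 .
\]
By \(1\)-periodicity of \(\Omega\) we may subtract the integer \(K-4\), which reduces the argument to \(u_0:=\log_2 3-1\). Since \(1<\log_2 3<2\), we have \(u_0\in[0,1]\). Therefore the defining formula \eqref{eq:Omega-def} applies directly, and this is the only point needing a check.

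Now evaluate, using \(\log_2\frac{16}{3}=4-\log_2 3\) and \(2^{1-u_0}=2^{2-\log_2 3}=\frac43\):
\[
\Omega(u_0)=\frac32-(4-\log_2 3)-(\log_2 3-1)-\frac43=\frac32-3-\frac43=-\frac{17}{6}.
\]
The \(\log_2 3\) terms cancel exactly, which is why the dyadic constant comes out rational.

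Finally, the error term becomes
\[
O\!\left(\frac{2^K}{\sqrt{K\log 2}}\right)=O\!\left(\frac{2^K}{\sqrt K}\right).
\]
Together these give \eqref{eq:dyadic-A-second-term}. There is no real obstacle beyond this bookkeeping of the phase reduction and the arithmetic.
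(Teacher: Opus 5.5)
Your proposal is correct and matches the paper's proof: specialize \cref{thm:log-periodic-second-term} to $X=2^K$, reduce the phase modulo $1$ to $\log_2 3-1=\log_2(3/2)\in[0,1]$, and evaluate \eqref{eq:Omega-def} there to get $-17/6$. Your appeal to uniformity in the phase is harmless but not needed, since the $O$-term in the theorem already holds for all sufficiently large $X$ and so restricts to any subsequence.
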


\begin{proof}
The fractional phase is
$\{\log_2 3\}=\log_2(3/2)$.  Substitution into
\eqref{eq:Omega-def} gives
\[
\Omega(\log_2(3/2))
=\frac32-\log_2\frac{16}{3}-\log_2\frac32-\frac43
=-\frac{17}{6}.
\]
Now apply \cref{thm:log-periodic-second-term}.
\end{proof}

\begin{figure}[htbp]
\centering
\includegraphics[width=0.88\textwidth]{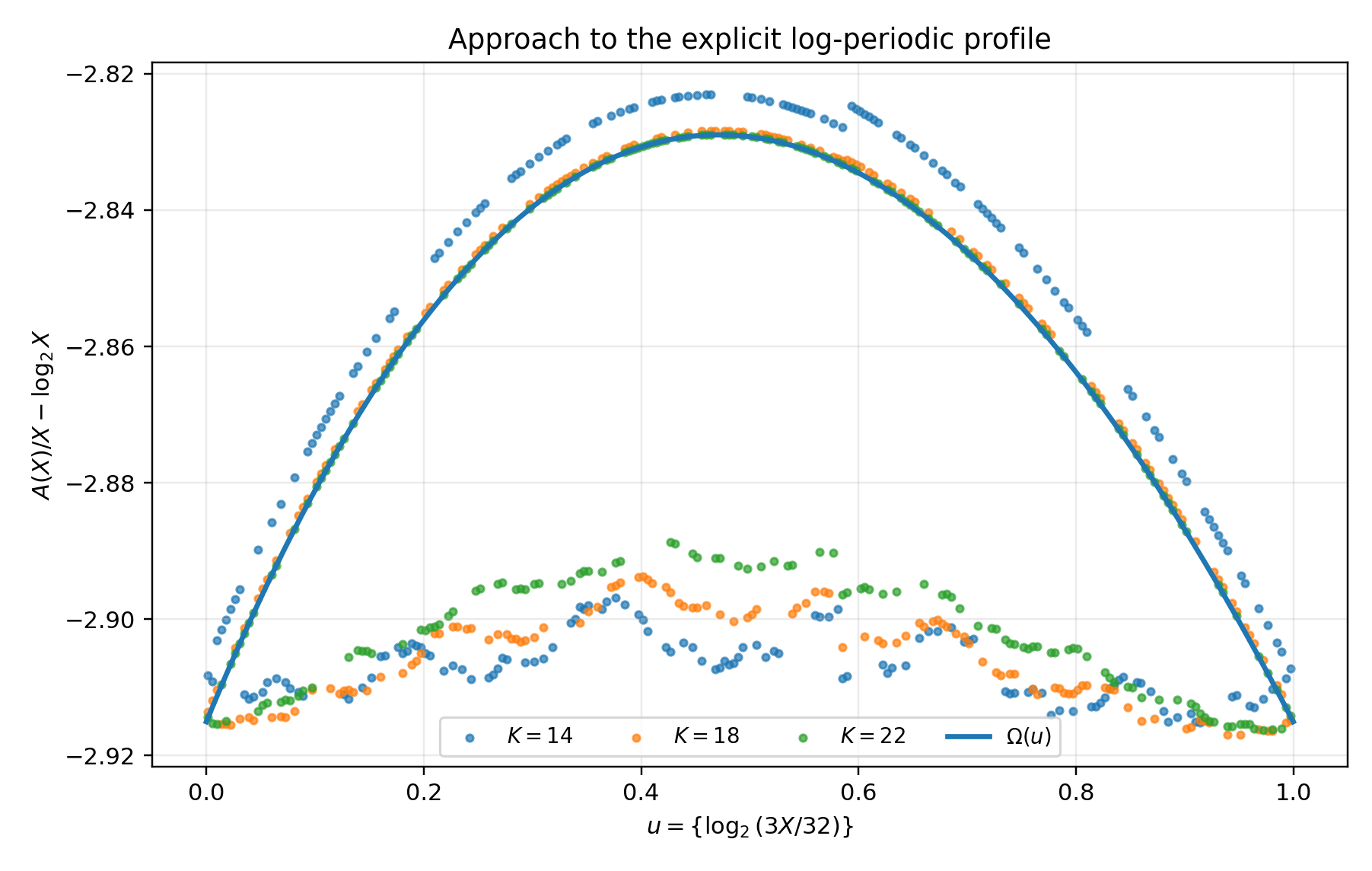}
\caption{Direct recurrence data compared with the explicit profile
$\Omega(u)$.  The order-$X$ oscillation is already captured by the proved
log-periodic term.  The second visible branch is smaller by the rigorously
identified square-root logarithmic scale and comes from the negative-arch prefix
imbalance $\mathscr D_r$.}
\label{fig:logperiodic-collapse}
\end{figure}

\subsection{Fourier spectrum and an Abelian pole skeleton}

The profile \eqref{eq:Omega-def} has an elementary Fourier transform.
Write
\begin{equation}
\Omega(u)=\sum_{m\in\mathbb Z}\widehat\Omega_m e^{2\pi imu}.
\label{eq:Omega-Fourier}
\end{equation}
Direct integration gives
\begin{equation}
{
\widehat\Omega_0
=\log_2\frac38-\frac1{\log2},
}
\label{eq:Omega-mean}
\end{equation}
and, for $m\ne0$,
\begin{equation}
{
\widehat\Omega_m
=\frac{\log2}
{(2\pi i m)(\log2+2\pi i m)}.
}
\label{eq:Omega-Fourier-coeff}
\end{equation}
Since \eqref{eq:Omega-Fourier-coeff} is $O(m^{-2})$, the Fourier series \eqref{eq:Omega-Fourier} converges absolutely and uniformly.
Here and below $\log2$ means the natural logarithm of $2$.

Put
\begin{equation}
\tau_m:=\frac{2\pi m}{\log2},
\qquad
d_m:=\widehat\Omega_m e^{2\pi i m\log_2 3}.
\label{eq:tau-d-def}
\end{equation}
Then \cref{thm:log-periodic-second-term} is equivalently
\begin{equation}
A(X)
=X\log_2X+d_0X
+\sum_{m\ne0}d_m X^{1+i\tau_m}
+O\!\left(\frac{X}{\sqrt{\log X}}\right),
\label{eq:A-Fourier-expansion}
\end{equation}
with an absolutely and uniformly convergent oscillatory sum.

The Fourier expansion identifies the full leading boundary resonance of the
$t=-1$ Dirichlet series.  Define
\begin{equation}
\mathcal M_{\Omega}(w)
:=\frac{w+1}{(\log2)w^2}
+(w+1)\sum_{m\in\mathbb Z}\frac{d_m}{w-i\tau_m}.
\label{eq:M-Omega-def}
\end{equation}
The series in \eqref{eq:M-Omega-def} converges normally away from the lattice
$i\tau_m$, so $\mathcal M_{\Omega}$ is meromorphic in the whole plane.

\begin{theorem}[Abelian boundary resonance]
\label{thm:Abelian-boundary-resonance}
Let $w=\sigma+i\tau$ with $0<\sigma\le1$ and $|\tau|\le T$.  Then
\begin{equation}
{
\Hz(w,-1)-\mathcal M_{\Omega}(w)
=O_T(\sigma^{-1/2})
\qquad(\sigma\downarrow0).
}
\label{eq:H-minus-M-bound}
\end{equation}
In particular,
\begin{equation}
{
\Hz(\sigma,-1)
=\frac{1}{(\log2)\sigma^2}
+\frac{\log_2(3/8)}{\sigma}
+O(\sigma^{-1/2}),
}
\label{eq:H-zero-boundary-refined}
\end{equation}
and, for every fixed $m\ne0$,
\begin{equation}
{
\Hz(\sigma+i\tau_m,-1)
=\frac{3^{i\tau_m}}{2\pi i m}\frac1\sigma
+O_m(\sigma^{-1/2}).
}
\label{eq:H-lattice-boundary}
\end{equation}
Thus every point $i\tau_m$ is a nonremovable boundary singularity of the
right-half-plane function $\Hz(w,-1)$.
\end{theorem}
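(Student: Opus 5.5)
Starting from the partial-summation identity in the proof of \cref{thm:tminus1-continuation}, valid for \(\RePart w>0\),
\[
\Hz(w,-1)=(w+1)\int_1^\infty A(x)\,x^{-w-2}\,dx,
\]
I insert the Fourier form \eqref{eq:A-Fourier-expansion} of \cref{thm:log-periodic-second-term}. Write \(A(x)=x\log_2x+d_0x+\sum_{m\ne0}d_mx^{1+i\tau_m}+\mathcal E(x)\) for \(x\ge x_0\). The contribution of \([1,x_0]\) is entire in \(w\) and bounded on the region. The model terms integrate explicitly:
\[
(w+1)\int_1^\infty x\log_2x\,x^{-w-2}dx=\frac{w+1}{(\log2)w^2},\qquad
(w+1)\int_1^\infty d_mx^{1+i\tau_m}x^{-w-2}dx=\frac{(w+1)d_m}{w-i\tau_m}.
\]
The sum over \(m\) may be exchanged with the integral by absolute convergence, since \(d_m=O(m^{-2})\). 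Together with the \(d_0\) term, this reproduces exactly \(\mathcal M_\Omega(w)\). The terms coming from \([1,x_0]\) only change things by \(O_T(1)\), because the model integrands are bounded there.

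It remains to bound the error integral \((w+1)\int_{x_0}^\infty \mathcal E(x)x^{-w-2}dx\), where \(|\mathcal E(x)|\ll x/\sqrt{\log x}\) uniformly in phase. Taking absolute values and substituting \(x=e^y\) gives
\[
\ll_T\int_{\log x_0}^\infty y^{-1/2}e^{-\sigma y}\,dy\ll\sigma^{-1/2}.
\]
This is the same square-root computation as in \eqref{eq:H-sqrt-bound}, and it proves \eqref{eq:H-minus-M-bound}. The uniformity of the error term in \cref{thm:log-periodic-second-term} is what makes this absolute bound legitimate. I expect this step to be the main point requiring care, though it is short: one must check that the model and the true \(A\) are compared on the same range and that no hidden \(x\log x\)-size discrepancy survives near \(x_0\).

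For \eqref{eq:H-zero-boundary-refined}, I expand \(\mathcal M_\Omega(\sigma)\) at real \(\sigma\downarrow0\).
\begin{itemize}[label={}]
\item The first term gives \(1/((\log2)\sigma^2)+1/((\log2)\sigma)\).
\item The \(m=0\) term gives \(d_0/\sigma+O(1)\), where \(d_0=\widehat\Omega_0=\log_2(3/8)-1/\log2\).
\item The \(m\ne0\) terms are \(O(1)\) uniformly, because \(|\sigma-i\tau_m|\ge|\tau_m|\) and \(\sum|d_m|<\infty\).
\end{itemize}
The two \(1/(\log 2)\) contributions cancel, leaving \(\log_2(3/8)/\sigma\).

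For \eqref{eq:H-lattice-boundary}, set \(w=\sigma+i\tau_m\). Then only the \(m\)-th term of \(\mathcal M_\Omega\) is singular. It equals
\[
\frac{(1+i\tau_m)\,d_m}{\sigma}+O_m(1),
\]
while all other lattice terms and the double-pole term stay bounded. Using \eqref{eq:Omega-Fourier-coeff}, \(d_m=\widehat\Omega_m3^{i\tau_m}\), and \(1+i\tau_m=(\log2+2\pi im)/\log2\), so the product collapses to \(3^{i\tau_m}/(2\pi im)\). Because this coefficient is nonzero, \(|\Hz|\asymp\sigma^{-1}\) along the horizontal approach, which rules out a holomorphic extension across \(i\tau_m\). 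The same argument with the \(\sigma^{-2}\) term handles \(w=0\).
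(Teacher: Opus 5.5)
Your proposal is correct and follows the paper's own route. You use partial summation to write $\Hz(w,-1)=(w+1)\int_1^\infty A(x)x^{-w-2}\,dx$, insert the Fourier form \eqref{eq:A-Fourier-expansion} to produce $\mathcal M_\Omega$, bound the remainder by $\int x^{-\sigma-1}(\log x)^{-1/2}\,dx=O(\sigma^{-1/2})$, and read off the principal parts at $0$ and $i\tau_m$. Your explicit handling of the initial range $[1,x_0]$ and of the sum--integral interchange only spells out steps the paper leaves implicit.
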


\begin{proof}
Partial summation gives, for $\RePart w>0$,
\[
\Hz(w,-1)
=(w+1)\int_1^\infty A(x)x^{-w-2}\,dx.
\]
Insert \eqref{eq:A-Fourier-expansion}.  The first three model terms integrate
to \eqref{eq:M-Omega-def}.  The remainder is bounded in absolute value by a
constant times
\[
\int_3^\infty
\frac{x^{-\sigma-1}}{\sqrt{\log x}}\,dx
=O(\sigma^{-1/2}),
\]
uniformly for $|\tau|\le T$.  This proves \eqref{eq:H-minus-M-bound}.

At $w=0$, the double-pole term contributes
$1/((\log2)w^2)+1/((\log2)w)$, while the $m=0$ Fourier mode contributes
$d_0/w+O(1)$.  Equation \eqref{eq:Omega-mean} makes the simple coefficient
$1/\log2+d_0=\log_2(3/8)$, giving
\eqref{eq:H-zero-boundary-refined}.

For $m\ne0$, the residue of the corresponding term of
\eqref{eq:M-Omega-def} is
\[
(1+i\tau_m)d_m
=\frac{e^{2\pi i m\log_2 3}}{2\pi i m}
=\frac{3^{i\tau_m}}{2\pi i m},
\]
which gives \eqref{eq:H-lattice-boundary}.  Since the remainder is only
$O(\sigma^{-1/2})$, it cannot cancel the displayed $1/\sigma$ or
$1/\sigma^2$ divergence.
\end{proof}

There is a compact closed form for the same pole skeleton.  Define
\begin{equation}
\boxed{
\mathcal K(w)
:=\frac{3^w}{w(2^w-1)}-\frac{5}{2w}.
}
\label{eq:compact-pole-skeleton}
\end{equation}

\begin{proposition}[Closed meromorphic skeleton]
\label{prop:closed-pole-skeleton}
The difference
\begin{equation}
\mathcal M_{\Omega}(w)-\mathcal K(w)
\label{eq:M-minus-K}
\end{equation}
is entire.  Hence \eqref{eq:compact-pole-skeleton} contains exactly the
principal parts forced by the log-periodic second term.
\end{proposition}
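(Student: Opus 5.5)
The plan is to compare principal parts directly. A difference of two meromorphic functions on $\C$ is entire exactly when their principal parts agree at every pole. So I will (i) locate all poles of $\mathcal M_\Omega$ and of $\mathcal K$, and (ii) check that the Laurent principal parts coincide at each such point.

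\emph{Step 1: the poles of $\mathcal M_\Omega$.} By \eqref{eq:Omega-Fourier-coeff}, $d_m=O(m^{-2})$. Hence on any compact set $K\subset\C$ the terms $d_m/(w-i\tau_m)$ with $i\tau_m\notin K$ are uniformly $O(m^{-2})$ and sum normally. This justifies the meromorphy already asserted after \eqref{eq:M-Omega-def}, and shows that the only possible poles are at $w=i\tau_m$, $m\in\mathbb Z$. Their principal parts come only from the explicitly singular terms:
\begin{itemize}[label={}]
\item at $i\tau_m$ with $m\ne0$, a simple pole with residue $(1+i\tau_m)d_m=3^{i\tau_m}/(2\pi i m)$, as computed in the proof of \cref{thm:Abelian-boundary-resonance};
\item at $w=0$, the contribution $1/((\log2)w^2)+1/((\log2)w)$ from the first term plus $d_0/w$ from the $m=0$ Fourier term.
\end{itemize}
Since $d_0=\widehat\Omega_0$ by \eqref{eq:tau-d-def}, \eqref{eq:Omega-mean} makes the total simple-pole coefficient at $0$ equal to $\log_2(3/8)$.

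\emph{Step 2: the poles of $\mathcal K$.} The numerator $3^w$ is entire and nonvanishing. The zeros of $w(2^w-1)$ are exactly $w=0$ (a double zero) and $w=i\tau_m$ for $m\ne0$ (simple zeros, since the derivative of $2^w$ is $(\log 2)2^{w}\neq0$). So $\mathcal K$ has poles only on the same lattice.

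At $i\tau_m$, $m\ne0$, write $2^w-1=(\log2)(w-i\tau_m)+O((w-i\tau_m)^2)$. The residue is then
\[
\frac{3^{i\tau_m}}{i\tau_m\log2}=\frac{3^{i\tau_m}}{2\pi i m},
\]
and the term $-5/(2w)$ is holomorphic there. This matches Step 1.

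At $w=0$, expand
\[
\frac{3^w}{w(2^w-1)}
=\frac{1+w\log3+O(w^2)}{(\log2)\,w^2\bigl(1+\tfrac{w\log2}{2}+O(w^2)\bigr)}
=\frac{1}{(\log2)w^2}+\frac{\log_2 3-\tfrac12}{w}+O(1).
\]
Subtracting $5/(2w)$ gives the principal part
\[
\frac{1}{(\log2)w^2}+\frac{\log_23-3}{w}=\frac1{(\log2)w^2}+\frac{\log_2(3/8)}{w},
\]
which again agrees with Step 1.

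\emph{Step 3: conclusion.} All principal parts of $\mathcal M_\Omega-\mathcal K$ vanish, and both functions are holomorphic off the lattice $\{i\tau_m\}$. The difference therefore has only removable singularities, so it is entire.

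The only delicate points are bookkeeping ones. The first is tracking the constant $d_0$ together with the extra $1/((\log2)w)$ coming from the factor $w+1$ at the origin. The second is making sure that the normally convergent tail contributes nothing singular at any lattice point. Both are settled by the decay $d_m=O(m^{-2})$ and the explicit value \eqref{eq:Omega-mean}.
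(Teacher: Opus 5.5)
Your proposal is correct and follows the paper's proof: both place all poles on the lattice $i\tau_m$, match the simple residues $3^{i\tau_m}/(2\pi i m)$ at $m\neq0$ against the residues already computed in the Abelian boundary resonance theorem, and match the principal part $1/((\log 2)w^2)+\log_2(3/8)/w$ at the origin. The only difference is that you write out the Laurent expansion of $\mathcal K$ at $0$ and the normal-convergence check for $\mathcal M_{\Omega}$, both of which the paper leaves to the reader.
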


\begin{proof}
Both functions are meromorphic with possible poles only at
$w=i\tau_m$.  If $m\ne0$, then
\[
\operatorname*{Res}_{w=i\tau_m}\mathcal K(w)
=\frac{3^{i\tau_m}}{i\tau_m\log2}
=\frac{3^{i\tau_m}}{2\pi i m},
\]
which agrees with \cref{thm:Abelian-boundary-resonance}.  At $w=0$,
expansion of \eqref{eq:compact-pole-skeleton} gives
\[
\mathcal K(w)
=\frac1{(\log2)w^2}
+\frac{\log_2(3/8)}{w}
+O(1),
\]
again matching \eqref{eq:M-Omega-def}.  All principal parts therefore cancel,
and the difference is entire.
\end{proof}

For the original two-variable function this yields a precise obstruction at
the next boundary.  Since
\[
Z_{\Qp}(s,-1)
=\frac12\zeta(s-1)+\frac12\Hz(s-1,-1),
\]
there can be no holomorphic continuation through any point
\begin{equation}
s=1+\frac{2\pi i m}{\log2}.
\label{eq:s-boundary-lattice}
\end{equation}
If a meromorphic continuation across such a point exists, its principal part
is forced by \eqref{eq:H-zero-boundary-refined}--\eqref{eq:H-lattice-boundary}.
In particular the point $s=1$ would carry a double pole, while the nonzero
lattice points would carry simple poles.

\subsection{The next layer: exact negative layers and a Gaussian skeleton}

The order-$X$ log-periodic term does not exhaust the arch structure.  Define
\begin{equation}
\mathcal R_A(X)
:=A(X)-X\log_2X
-X\Omega\!\left(\log_2\frac{3X}{32}\right).
\label{eq:RA-def}
\end{equation}
The proof of \cref{thm:log-periodic-second-term} and the exact local formulas
show more than the global bound:
\begin{equation}
\mathcal R_A(2M)=O(r^2)
\qquad(\nu_r\le M\le v_r),
\label{eq:RA-positive-small}
\end{equation}
whereas on the negative arch, for $M=v_r+t$,
\begin{equation}
{
\mathcal R_A(2M)=2\mathscr D_r(t)+O(r^2).
}
\label{eq:RA-negative-D}
\end{equation}
Thus on the even lattice the next fluctuation is reduced to the single
prefix-imbalance process \eqref{eq:negative-prefix-imbalance}.  From this
point through \cref{cor:flat-area-polylog} we study precisely this
negative-even arch channel.  These results do not constitute a
decomposition of the full Mellin remainder, which also samples the odd
cells.  We sharpen the identified channel using the companion-forest layer
structure.

For a finite word \(a=(a_1,\ldots,a_m)\) of nonnegative integers, write
\begin{equation}
 |a|:=m,\qquad
 \|a\|:=\sum_{j=1}^m a_j,\qquad
 \operatorname{enc}(a):=1^{a_1}0\cdots1^{a_m}0,
 \label{eq:degree-word-encoding}
\end{equation}
with \(\operatorname{enc}(\varnothing):=\varnothing\).  Set
\(\psi(0):=\varnothing\).  For \(L\ge1\), define
\begin{equation}
 \eta_L(i)=
 \begin{cases}
  2i-1,&2i\le L+1,\\
  2(L-i+1),&2i>L+1,
 \end{cases}
 \qquad
 \psi(L):=
 \bigl(\eta_L(1)-1,\ldots,\eta_L(L)-1\bigr).
 \label{eq:companion-permutation-explicit}
\end{equation}
Thus \(\psi(L)\) lists the even integers below \(L\) increasingly and then
the odd integers decreasingly.  Extend \(\psi\) to integer words by ordered
concatenation,
\[
 \psi(a_1\cdots a_m):=\psi(a_1)\cdots\psi(a_m),
 \qquad \psi(\varnothing):=\varnothing.
\]
Regarding \(L\) as a one-letter word, put
\begin{equation}
 W_{L,k}:=\psi^k((L)),
 \qquad
 \mathcal L_{R,k}:=W_{0,k}W_{2,k}\cdots W_{2R,k}.
 \label{eq:companion-depth-words}
\end{equation}
Thus \(W_{L,0}=(L)\) and \(W_{L,k}=\varnothing\) for \(k>L\).  For
\(r\ge0\), set \(R:=r+1\) and
\begin{equation}
 C_{r,k}:=\sum_{i=0}^{R}\binom{2i}{k}
 \qquad(k\ge0),
 \label{eq:negative-layer-widths}
\end{equation}
where binomial coefficients outside their natural range are zero.

\begin{proposition}[Exact companion-forest input]
\label{prop:companion-forest-encoding}
For all \(L,k\ge0\),
\begin{equation}
 |W_{L,k}|=\binom Lk,
 \qquad \|W_{L,k}\|=\binom L{k+1}.
 \label{eq:single-root-length-mass}
\end{equation}
Moreover, the complete negative word has the reverse breadth-first
factorization
\begin{equation}
 {
 0N_r=
 \operatorname{enc}(\mathcal L_{R,2R})
 \operatorname{enc}(\mathcal L_{R,2R-1})\cdots
 \operatorname{enc}(\mathcal L_{R,0})1^{R+1}.}
 \label{eq:whole-negative-word-factorization}
\end{equation}
Consequently the depth-\(k\) layer has
\begin{equation}
 |\mathcal L_{R,k}|=C_{r,k},
 \qquad
 \|\mathcal L_{R,k}\|=C_{r,k+1}.
 \label{eq:companion-layer-width-from-forest}
\end{equation}
\end{proposition}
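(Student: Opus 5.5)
The proposition has three parts of different nature. The identities \eqref{eq:single-root-length-mass} are purely combinatorial statements about the substitution \(\psi\). The factorization \eqref{eq:whole-negative-word-factorization} is structural input about the orbit. The layer counts \eqref{eq:companion-layer-width-from-forest} then follow by additivity. I would treat them in that order.

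\textbf{Step 1: the identities \eqref{eq:single-root-length-mass}.} Length and mass ignore the order of letters, so only the multiset of letters of \(W_{L,k}\) matters. By \eqref{eq:companion-permutation-explicit}, \(\psi(a)\) is a permutation of \((0,1,\ldots,a-1)\): the map \(i\mapsto\eta_L(i)\) is a bijection onto \(\{1,\ldots,L\}\). Let \(c_k(j)\) be the number of occurrences of the letter \(j\) in \(W_{L,k}\). Then
\[
c_0=\delta_{L},\qquad c_k(j)=\sum_{l>j}c_{k-1}(l).
\]
By induction and the hockey-stick identity this gives \(c_k(j)=\binom{L-1-j}{k-1}\) for \(k\ge1\). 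Summing over \(j\) gives \(|W_{L,k}|=\binom Lk\). Weighting by \(j\) gives \(\|W_{L,k}\|=\sum_j j\binom{L-1-j}{k-1}=\binom L{k+1}\); this is the hockey stick again after writing \(j=\sum_{l<j}1\). The conventions \(W_{L,0}=(L)\) and \(W_{L,k}=\varnothing\) for \(k>L\) are immediate. Since \(\mathcal L_{R,k}\) is a concatenation, \eqref{eq:companion-layer-width-from-forest} follows at once from \eqref{eq:negative-layer-widths}.

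\textbf{Step 2: the factorization \eqref{eq:whole-negative-word-factorization}.} This is not derivable from the identities recorded in \cref{prop:exact-arch-data} alone. I would import it from Clo\^{\i}tre's arch-and-forest construction \cite{Cloitre2026}, as recalled in \cite{Mantovanelli2026Frequency}. There the negative word is built from a forest whose roots carry the even degrees \(0,2,\ldots,2R\). Each node of degree \(a\) has children of degrees \(\psi(a)\), in exactly the even-increasing/odd-decreasing order of \eqref{eq:companion-permutation-explicit}. The word is read layer by layer from the deepest layer up, each node of degree \(a\) contributing \(1^a0\), and a final run \(1^{R+1}\) is appended.

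If a self-contained argument is wanted, I would proceed by induction on \(r\). The increments \(\Delta V\) on the \(r\)-th negative arch are produced through the recursion \eqref{eq:perturbed} by the increments on earlier arches. The task is to check that one dyadic step acts on the degree words exactly as the substitution \(\psi\) plus the adjunction of a new root \(2R\).

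\textbf{Step 3: consistency checks.} I would verify the factorization against the exact arch data. The encoding \(\operatorname{enc}\) of a word of length \(m\) and mass \(s\) has \(m+s\) letters, of which \(s\) are ones. Using \(\sum_k C_{r,k}=\sum_{i=0}^R4^i=(4^{R+1}-1)/3\), the right side of \eqref{eq:whole-negative-word-factorization} has total length \(2(4^{r+2}-1)/3=L_r+1\). Its number of ones is \((4^{r+2}-1)/3=2a_r-1\), matching \(V(\nu_{r+1})-V(v_r)\) from \eqref{eq:arch-endpoints}. It also ends in \(1\), consistent with \(N_r[L_r-1]=1\).

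\textbf{Main obstacle.} The real difficulty is the ordering claim in Step 2, namely that siblings appear in the permuted order \(\psi\) and layers in reverse breadth-first order. Lengths and masses are insensitive to this, so the counting checks of Step 3 cannot confirm it. Here I would rely on the cited construction, checking the base cases \(r=0,1\) directly against the recurrence.
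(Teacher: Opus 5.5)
Your proposal is correct and follows the paper's proof. The single-root identities come from a hockey-stick induction: you track letter multiplicities $c_k(j)$ through the last application of $\psi$, while the paper splits off the root's children via $W_{L,k+1}=\prod_{d\in\psi(L)}W_{d,k}$, which is an inessential difference. The layer counts then follow by summing over the even roots, and the reverse breadth-first factorization of $0N_r$ is imported from Clo\^{\i}tre's companion-forest construction. The paper pins that import down by combining $0N_r=T(A_r)$, the passage from property $\mathsf S$ of $A_r$ to property $\mathsf R$ of $T(A_r)$, the root-degree computation $(0,2,\ldots,2R)$, and Clo\^{\i}tre's source-factor lemma rerun with $\psi$ in place of $\phi$; this is the concrete content behind your Step~2.
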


\begin{proof}
Because \(\psi(L)\) is a permutation of \(0,\ldots,L-1\),
\[
 W_{L,k+1}=\prod_{d\in\psi(L)}W_{d,k},
\]
where the product denotes ordered concatenation.  Starting from
\(|W_{L,0}|=1\) and \(\|W_{L,0}\|=L\), induction and the hockey-stick
identity give
\[
 |W_{L,k+1}|=\sum_{d=0}^{L-1}|W_{d,k}|
 =\sum_{d=0}^{L-1}\binom dk=\binom L{k+1},
\]
\[
 \|W_{L,k+1}\|=\sum_{d=0}^{L-1}\|W_{d,k}\|
 =\sum_{d=0}^{L-1}\binom d{k+1}=\binom L{k+2}.
\]
This proves \eqref{eq:single-root-length-mass}, and summation over the even
roots gives \eqref{eq:companion-layer-width-from-forest}.

We now identify the forest encoded by the negative word.  Clo\^{\i}tre
proves that \(0N_r=T(A_r)\), that every orbital core \(A_r\) has property
\(\mathsf S\), and hence that \(T(A_r)\) has the companion property
\(\mathsf R\); see
\cite[Lem.~4.3, Cor.~4.6, Def.~4.4, and Lem.~4.5]{Cloitre2026}.
The root word of \(A_r\) is \((1,3,\ldots,2r+1)\), while the root
calculation in the proof of \cite[Lem.~4.11]{Cloitre2026} shows that the
roots of \(T(A_r)\) have degrees
\[
 (0,2,4,\ldots,2r+2)=(0,2,\ldots,2R);
\]
see also \cite[Sec.~8.3]{Cloitre2026}.  Property \(\mathsf R\) says
precisely that a degree-\(L\) vertex has its children in the order
\(\psi(L)\).  The source-factor argument of
\cite[Lem.~4.9]{Cloitre2026}, with \(\psi\) in place of the
\(\mathsf S\)-word \(\phi\), therefore gives the degree word
\[
 \mathcal L_{R,2R}\mathcal L_{R,2R-1}\cdots\mathcal L_{R,0}.
\]
Finally, the standard degree expansion appends one terminal \(1\) for
each of the \(R+1\) roots.  Applying \(\operatorname{enc}\) to the
successive degree layers proves
\eqref{eq:whole-negative-word-factorization}.
\end{proof}

For $0\le k\le2r+2$, put
\begin{equation}
S_{r,k}:=\sum_{\ell=k}^{2r+2}C_{r,\ell},
\label{eq:negative-layer-tail}
\end{equation}
and set $S_{r,2r+3}=C_{r,2r+3}:=0$.  Define the canonical layer cuts
\begin{equation}
\theta_{r,k}:=2S_{r,k}-C_{r,k}-1.
\label{eq:negative-layer-cuts}
\end{equation}
The endpoint count in \cref{prop:exact-arch-data} gives
\(\mathscr D_r(L_r)=1\).  For cyclic bookkeeping at the bottom layer, we
extend the prefix imbalance by
\begin{equation}
 \mathscr D_r(-1):=\mathscr D_r(L_r)=1.
 \label{eq:cyclic-prefix-convention}
\end{equation}
Then \(\theta_{r,2r+3}=-1\).
By \cref{prop:companion-forest-encoding}, these are exactly the layer widths
of the companion forest encoded by \(N_r\).

\begin{theorem}[Exact negative-layer cuts]
\label{thm:negative-layer-cuts}
For every $0\le k\le2r+3$,
\begin{equation}
{
\mathscr D_r(\theta_{r,k})=1-C_{r,k}.
}
\label{eq:D-layer-exact-new}
\end{equation}
Moreover, for \(0\le k\le2r+2\),
\begin{equation}
\theta_{r,k}-\theta_{r,k+1}=C_{r,k}+C_{r,k+1}.
\label{eq:layer-mesh-new}
\end{equation}
\end{theorem}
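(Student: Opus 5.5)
The plan is to read \eqref{eq:D-layer-exact-new} directly off the reverse breadth-first factorization \eqref{eq:whole-negative-word-factorization} of \cref{prop:companion-forest-encoding}. The leading \(0\) in \(0N_r\) is the only shift to track. Let \(\mathscr D'_r(p)\) denote the excess of ones over zeros in the first \(p\) letters of the word \(0N_r\). Since \(0N_r\) starts with a zero, the first \(t\) letters of \(N_r\) are letters \(2,\dots,t+1\) of \(0N_r\), and so
\[
 \mathscr D_r(t)=\mathscr D'_r(t+1)+1 \qquad (0\le t\le L_r).
\]
It therefore suffices to show that the prefix of \(0N_r\) of length \(\theta_{r,k}+1\) ends exactly at a layer boundary and has imbalance \(-C_{r,k}\).

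The key elementary observation is the letter count of a single encoded layer. For any word \(a\) of nonnegative integers, \(\operatorname{enc}(a)\) has \(\|a\|\) ones and \(|a|\) zeros. Hence it has length \(|a|+\|a\|\) and imbalance \(\|a\|-|a|\). By \eqref{eq:companion-layer-width-from-forest}, the block \(\operatorname{enc}(\mathcal L_{R,\ell})\) therefore has length \(C_{r,\ell}+C_{r,\ell+1}\) and imbalance \(C_{r,\ell+1}-C_{r,\ell}\).

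Next, sum these counts over the blocks \(\ell=2r+2,2r+1,\dots,k\), which form an initial segment of \eqref{eq:whole-negative-word-factorization}; recall \(2R=2r+2\). We use \(C_{r,2r+3}=0\), which holds because every binomial \(\binom{2i}{2r+3}\) with \(i\le r+1\) vanishes. The total length is
\[
 \sum_{\ell=k}^{2r+2}\bigl(C_{r,\ell}+C_{r,\ell+1}\bigr)=S_{r,k}+S_{r,k+1}=2S_{r,k}-C_{r,k}=\theta_{r,k}+1 .
\]
The imbalance telescopes to \(C_{r,2r+3}-C_{r,k}=-C_{r,k}\). Consequently \(\mathscr D_r(\theta_{r,k})=1-C_{r,k}\) for \(0\le k\le 2r+2\).

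It remains to confirm that these positions lie in the admissible range \(0\le\theta_{r,k}\le L_r\). Even \(k=0\) stops before the trailing block \(1^{R+1}\), so the prefix never exceeds the word. For \(k=2r+2\) the block is nonempty, since \(C_{r,2r+2}=1\), so \(\theta_{r,2r+2}=0\) and \(\mathscr D_r(0)=0=1-C_{r,2r+2}\), consistent with the formula. The case \(k=2r+3\) gives \(\theta_{r,2r+3}=-1\) and is exactly the cyclic convention \eqref{eq:cyclic-prefix-convention}: \(\mathscr D_r(-1)=1=1-C_{r,2r+3}\).

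The mesh identity \eqref{eq:layer-mesh-new} is then pure algebra. Using \(S_{r,k}=S_{r,k+1}+C_{r,k}\),
\[
 \theta_{r,k}-\theta_{r,k+1}=2C_{r,k}-C_{r,k}+C_{r,k+1}=C_{r,k}+C_{r,k+1}.
\]
This includes \(k=2r+2\), where \(\theta_{r,2r+3}=-1\). It is also just the length of the block \(\operatorname{enc}(\mathcal L_{R,k})\), as it should be.

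The only delicate point is bookkeeping rather than mathematics: the off-by-one created by the leading \(0\) in \(0N_r\), and the top and bottom layers, where the conventions \(S_{r,2r+3}=C_{r,2r+3}=0\) and \(\mathscr D_r(-1)=1\) must be checked to agree with the general formula. All structural content is already supplied by \cref{prop:companion-forest-encoding}.
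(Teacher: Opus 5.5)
Your proposal is correct and follows essentially the paper's argument: count the zeros ($S_{r,k}$) and ones ($S_{r,k}-C_{r,k}$) in the prefix of $0N_r$ formed by the encoded layers $2r+2,\dots,k$ of the reverse breadth-first factorization, strip the initial zero, and get the mesh identity by subtracting adjacent cuts. Your per-block telescoping and your explicit boundary checks at $k=2r+2$ and $k=2r+3$ are only a more detailed bookkeeping of the same computation.
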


\begin{proof}
For \(k=2r+3\), we have \(C_{r,k}=0\), \(\theta_{r,k}=-1\), and
\(\mathscr D_r(\theta_{r,k})=1\) by
\eqref{eq:cyclic-prefix-convention}.  Thus assume \(0\le k\le2r+2\).
Read the reverse breadth-first factorization
\eqref{eq:whole-negative-word-factorization} from left to right.  Layer
$k$ contains $C_{r,k}$
vertices and its children form layer $k+1$, of width $C_{r,k+1}$.  After the
layers \(k,k+1,\ldots,2r+2\) have been read, their number of degree symbols is
$S_{r,k}$, while their total degree is
\[
 \sum_{\ell=k}^{2r+2}C_{r,\ell+1}
 =S_{r,k}-C_{r,k}.
\]
Hence their binary length in the word \(0N_r\) is
$2S_{r,k}-C_{r,k}$.  Removing the initial zero gives
\eqref{eq:negative-layer-cuts}.  At this cut the excess of zeros over ones in
$0N_r$ is $C_{r,k}$, hence the excess of ones over zeros in $N_r$ is
$1-C_{r,k}$, proving \eqref{eq:D-layer-exact-new}.  Subtracting two adjacent
cut positions gives \eqref{eq:layer-mesh-new}.
\end{proof}

Let $\Phi_{\rm N}$ denote the standard normal distribution function.  For
fixed $z\in\mathbb R$, set
\begin{equation}
k_r(z):=
\left\lfloor
r+1+z\sqrt{\frac{r+1}{2}}
\right\rfloor .
\label{eq:negative-central-index}
\end{equation}
A local central limit argument applied to \eqref{eq:negative-layer-widths}
gives the following complete layer skeleton.

\begin{theorem}[Gaussian negative-layer skeleton]
\label{thm:negative-gaussian-skeleton}
Locally uniformly for $z$ in compact subsets of $\mathbb R$,
\begin{align}
C_{r,k_r(z)}
&\sim
\frac{16}{3\sqrt\pi}\,
\frac{4^r}{\sqrt r}e^{-z^2/2},
\label{eq:C-gaussian-new}\\
\frac{\theta_{r,k_r(z)}}{L_r}
&\longrightarrow \Phi_{\rm N}(-z),
\label{eq:theta-gaussian-new}\\
\frac{\sqrt r}{4^r}
\mathscr D_r(\theta_{r,k_r(z)})
&\longrightarrow
-\frac{16}{3\sqrt\pi}e^{-z^2/2}.
\label{eq:D-gaussian-new}
\end{align}
Define \(\Xi_r^{\rm lay}\) to be the continuous piecewise-affine function
on \([0,1]\) whose nodes are
\[
 \left(
 \frac{\theta_{r,k}}{L_r},
 \frac{\sqrt r}{4^r}\mathscr D_r(\theta_{r,k})
 \right)
 \quad(0\le k\le2r+2),
 \qquad
 \left(
 1,\frac{\sqrt r}{4^r}\mathscr D_r(L_r)
 \right).
\]
Then \(\Xi_r^{\rm lay}\) converges locally uniformly on \((0,1)\) and in
\(L^1[0,1]\) to
\begin{equation}
\boxed{
\Xi_{\rm G}(y)
=-\frac{16}{3\sqrt\pi}
\exp\!\left[-\frac12\bigl(\Phi_{\rm N}^{-1}(y)\bigr)^2\right].
}
\label{eq:XiG-new}
\end{equation}
In addition,
\begin{equation}
 {
 \max_{0\le t\le L_r}|\mathscr D_r(t)|
 \sim
 \frac{16}{3\sqrt\pi}\frac{4^r}{\sqrt r}.}
 \label{eq:sharp-D-amplitude}
\end{equation}
\end{theorem}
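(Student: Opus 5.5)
The plan is to derive everything from the binomial mixture $C_{r,k}=\sum_{i=0}^{R}\binom{2i}{k}$ with $R=r+1$, together with the exact identities of \cref{thm:negative-layer-cuts}.

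\textbf{Step 1: local limit for the widths.} Write $\binom{2i}{k}=4^i\,\mathbb P(\mathrm{Bin}(2i,\tfrac12)=k)$. The sum is dominated by the top terms $i=R-j$ with $j=O(1)$, since the weights $4^{-j}$ decay geometrically. For fixed $j$ and $k=k_r(z)=R+z\sqrt{R/2}+O(1)$, the de Moivre--Laplace local theorem applies with mean $R-j$ and variance $(R-j)/2$:
\[
\binom{2(R-j)}{k}\sim 4^{R-j}\frac{1}{\sqrt{\pi R}}e^{-z^2/2},
\]
uniformly for $z$ in compacts. The shift by $j$ changes the standardized argument only by $O(R^{-1/2})$. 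Dominated convergence in $j$ (using the uniform bound $\binom{2i}{k}\le 4^i/\sqrt{\pi i}\cdot O(1)$) gives
\[
C_{r,k}\sim \frac{4^R}{\sqrt{\pi R}}\cdot\frac{1}{1-1/4}\,e^{-z^2/2}=\frac{16}{3\sqrt\pi}\frac{4^r}{\sqrt r}e^{-z^2/2},
\]
which is \eqref{eq:C-gaussian-new}. Combining this with \eqref{eq:D-layer-exact-new}, $\mathscr D_r(\theta_{r,k})=1-C_{r,k}$, yields \eqref{eq:D-gaussian-new} at once.

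\textbf{Step 2: the cut positions.} By \eqref{eq:negative-layer-cuts}, $\theta_{r,k}=2S_{r,k}-C_{r,k}-1$. The total mass is $\sum_k C_{r,k}=\sum_i 4^i=(4^{R+1}-1)/3$, and $L_r=4a_r-3\sim\frac{32}{3}4^r=2\sum_k C_{r,k}$. Hence
\[
\frac{\theta_{r,k}}{L_r}=\frac{S_{r,k}}{\sum_\ell C_{r,\ell}}+O(r^{-1/2}).
\]
The ratio on the right is the upper tail of the mixture law $\sum_i 4^i\mathrm{Bin}(2i,\tfrac12)$, normalized. Its top components are asymptotically $N(R,R/2)$, so the de Moivre--Laplace integral theorem together with the same geometric domination gives the limit $\Phi_{\rm N}(-z)$, which is \eqref{eq:theta-gaussian-new}.

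\textbf{Step 3: the piecewise-affine profile.} The map $z\mapsto\theta_{r,k_r(z)}/L_r$ converges to the continuous, strictly decreasing function $\Phi_{\rm N}(-z)$. Steps 1 and 2 are locally uniform, and the mesh $(C_{r,k}+C_{r,k+1})/L_r=O(r^{-1/2})$ tends to zero. It follows that the nodes of $\Xi_r^{\rm lay}$ fill $(0,1)$ densely, with node heights converging to $\Xi_{\rm G}$ evaluated at the node abscissa, where $y=\Phi_{\rm N}(-z)$ gives $z^2=(\Phi_{\rm N}^{-1}(y))^2$. A standard argument for interpolants of monotone node sequences with vanishing mesh then gives locally uniform convergence on $(0,1)$.

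For $L^1$ convergence, use the uniform bound $\sqrt r\,4^{-r}C_{r,k}\le c$ (the maximum of a binomial coefficient) to get uniform boundedness, and apply dominated convergence. The edges near $y=0,1$ then contribute arbitrarily little.

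\textbf{Step 4: the sharp amplitude, which I expect to be the main obstacle.} The layer nodes give only the lower bound $\max|\mathscr D_r|\ge C_{r,r+1}-1\sim\frac{16}{3\sqrt\pi}4^r/\sqrt r$. For the matching upper bound I must control $\mathscr D_r$ between cuts. Inside the segment of layer $k$, the word is $\operatorname{enc}$ of degrees $d$ whose sum equals the width of layer $k+1$. Reading a prefix of that layer changes the zero-excess of $0N_r$ from $C_{r,k}$ by (number of vertices read) minus (their total degree). The running excess is therefore at most $\max(C_{r,k},C_{r,k+1})$ plus the maximal partial-sum overshoot of the ordered degree sequence. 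I would bound this overshoot using the $\psi$-ordering: even degrees increasing, then odd degrees decreasing. This ordering keeps partial sums of (degree $-1$) controlled by the same binomial counts, via \eqref{eq:single-root-length-mass} applied to sub-forests, giving at most $\max_k C_{r,k}+O(4^r r^{-1})$.

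If that local control proves delicate, the fallback is a cruder estimate showing the intra-layer fluctuation is $o(4^r/\sqrt r)$, which suffices for \eqref{eq:sharp-D-amplitude}.
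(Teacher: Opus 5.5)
\textbf{Assessment.} Your Steps 1--3 follow the paper's proof: local de Moivre--Laplace for the top roots with a geometric majorant in the root deficit, the same decomposition applied to the upper binomial tails, the exact cut identity $\mathscr D_r(\theta_{r,k})=1-C_{r,k}$, and the $O(r^{-1/2})$ normalized mesh. Your bounded-convergence $L^1$ step, using that all node heights are $O(1)$ after normalization, is essentially the paper's.

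\textbf{The gap.} The upper half of the sharp amplitude is not proved. In Step~4 you assert an intra-layer overshoot of $O(4^r/r)$ but give no argument for it. Moreover, the $\psi$-ordering you invoke plays no role. The paper does not look inside the layers at this point. It writes $\mathscr D_r=-h_{N_r}$ and imports Clo\^{\i}tre's exact-fit estimate $0\le h_{N_r}(t)\le 2H_r-2$, together with $H_r=2+\sum_{j\le r}\binom{2j+1}{j}\sim\frac{8}{3\sqrt\pi}\frac{4^r}{\sqrt r}$. This gives the limsup bound at once.

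\textbf{How to complete your route.} Your route can be closed without that input, as follows.

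Fix a position in the layer-$k$ word $W_{0,k}W_{2,k}\cdots W_{2R,k}$, and let $v_0,v_1,\dots$ be the ancestors of the current vertex, so that $\deg v_\ell\le 2R-\ell$. Since $W_{D,\kappa}=\prod_{d\in\psi(D)}W_{d,\kappa-1}$, the prefix splits into complete root words $W_{2i,k}$ and, at each level $\ell\ge1$, complete sibling words $W_{d,k-\ell}$ with $d<\deg v_{\ell-1}$.

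Because $|W_{d,\kappa}|=\binom d\kappa$ and $\|W_{d,\kappa}\|=\binom d{\kappa+1}$, each piece contributes exactly $\binom d\kappa\frac{2\kappa+1-d}{\kappa+1}$ to $\sum(1-a_i)$, whatever the order of the children. Hence
\[
 \max_j\Bigl|\sum_{i\le j}(1-a_i)\Bigr|
 \le\sum_{\ell\ge0}\ \sum_{d\le 2R-\ell}
 \binom{d}{k-\ell}\frac{\lvert 2(k-\ell)+1-d\rvert}{k-\ell+1}.
\]

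\emph{Central range.} For $|k-R|\le R^{0.6}$ and $\ell\le C\log R$, the inner sum is geometrically dominated by $d$ near $2R-\ell$ (ratio $(d-\kappa)/d\approx\tfrac12$). Also $\binom{2R-\ell}{k-\ell}\le\binom{2R}{k}(k/2R)^{\ell}$. Together these give $O\bigl(\binom{2R}{k}(|k-R|+1)/R\bigr)=O(4^r/r)$.

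\emph{Remaining terms.} The other levels, and the other values of $k$, are negligible by the crude bound $4R^2\binom{2R+1}{k+1}$. This comes from $|2\kappa+1-d|\le 2R$ and the hockey-stick identity.

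\emph{Conclusion.} At a block end, the zero-excess of $0N_r$ is $C_{r,k+1}+\sum_{i\le j}(1-a_i)$. Note that the layer-$k$ segment starts at $C_{r,k+1}$, not at $C_{r,k}$ as you wrote. The dips inside a block $1^{a}0$ are at most $2R$. The excess is nonnegative, since it counts pending vertices. Since $-\mathscr D_r(t)$ equals this excess minus one, we get
\[
 -1\le-\mathscr D_r\le\max_\ell C_{r,\ell}+O(4^r/r),
 \qquad
 \max_\ell C_{r,\ell}\le\sum_{i\le R}\binom{2i}{i}\sim\frac{16}{3\sqrt\pi}\frac{4^r}{\sqrt r}.
\]
This closes the limsup.

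\textbf{A side remark.} The displayed bound is two-sided. It therefore pins the raw path to within $O(4^r/r)$ of $1-C_{r,k+1}$ on every layer segment, which would give locally uniform convergence of $\Xi_r^{\rm raw}$. The paper explicitly leaves that open, so this estimate deserves a careful full write-up rather than a sketch.
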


\begin{proof}
For $k=k_r(z)$, the largest term in $C_{r,k}$ comes from $i=r+1$.
Writing $i=r+1-j$, the local de Moivre--Laplace theorem
\cite{Petrov1975} gives
\[
\binom{2r+2-2j}{k_r(z)}
\sim
\frac{4^{r+1-j}}{\sqrt{\pi r}}e^{-z^2/2}
\]
for each fixed $j$, while the standard central-binomial bound supplies a
geometric majorant $O(4^{-j})$.  Dominated convergence yields
\eqref{eq:C-gaussian-new}.  The same geometric decomposition of the upper
binomial tails yields \eqref{eq:theta-gaussian-new}; then
\eqref{eq:D-layer-exact-new} gives \eqref{eq:D-gaussian-new}.  The layer mesh
is $O(r^{-1/2})$ after normalization, by \eqref{eq:layer-mesh-new} and the
central-binomial bound, so the polygonal skeleton converges locally uniformly;
the uniform bound \eqref{eq:prefix-imbalance-bound} supplies the \(L^1\)
control away from the terminal segment.  That segment has horizontal length
\(O(r/4^r)\) and normalized height \(O(r^{3/2}/4^r)\), so its \(L^1\)-mass is
\(O(r^{5/2}/16^r)\).

It remains to justify the assertion about the full maximum.  For a binary
word \(W\), let \(h_W(t)\) be the excess of zeros over ones in its prefix
of length \(t\).  Then \(\mathscr D_r(t)=-h_{N_r}(t)\).  Put
\[
 H_r:=\max_{0\le t\le|P_r|}h_{P_r}(t).
\]
The exact-fit estimate of \cite[Lemmas~2.9--2.10]{Cloitre2026} gives
\[
 h_{N_r}(t)\ge0\quad(0\le t<L_r),
 \qquad
 \max_{0\le t<L_r}h_{N_r}(t)\le2H_r-2.
\]
Moreover, \cite[Theorem~1.1(ii)]{Cloitre2026} gives
\[
 H_r=2+\sum_{j=1}^{r}\binom{2j+1}{j}
 \sim\frac{8}{3\sqrt\pi}\frac{4^r}{\sqrt r}.
\]
Hence the limsup in \eqref{eq:sharp-D-amplitude} is at most the displayed
constant.  At the canonical central cut \(k_r(0)=r+1\),
\eqref{eq:D-gaussian-new} gives the reverse inequality for the liminf.
This proves \eqref{eq:sharp-D-amplitude}.
\end{proof}

\begin{corollary}[A canonical negative-arch subsequence at the optimal pointwise scale]
\label{cor:canonical-negative-subsequence}
Set
\[
 t_r:=\theta_{r,r+1},
 \qquad
 n_r:=2(v_r+t_r).
\]
Then
\begin{align}
 n_r&\sim32\,4^r,
 \label{eq:canonical-negative-n}\\
 \Qp(n_r)-\frac{n_r}{2}
 &\sim-\frac{16}{3\sqrt\pi}\frac{4^r}{\sqrt r}.
 \label{eq:canonical-negative-error}
\end{align}
Consequently,
\begin{equation}
 \boxed{\displaystyle
 \lim_{r\to\infty}
 \frac{\sqrt{\log n_r}}{n_r}
 \left(\Qp(n_r)-\frac{n_r}{2}\right)
 =-\frac{\sqrt{\log4}}{6\sqrt\pi}.}
 \label{eq:canonical-negative-natural-limit}
\end{equation}
Equivalently, in Clo\^{\i}tre's base-two normalization,
\begin{equation}
 \left(\frac{\Qp(n_r)}{n_r}-\frac12\right)
 \sqrt{\log_2n_r}
 \longrightarrow-\frac{1}{3\sqrt{2\pi}}.
 \label{eq:canonical-negative-base-two-limit}
\end{equation}
In particular,
\begin{align*}
 \liminf_{n\to\infty}
 \frac{\sqrt{\log n}}{n}
 \left(\Qp(n)-\frac n2\right)
 &\le-\frac{\sqrt{\log4}}{6\sqrt\pi},\\
 \limsup_{n\to\infty}
 \frac{\sqrt{\log n}}{n}
 \left|\Qp(n)-\frac n2\right|
 &\ge\frac{\sqrt{\log4}}{6\sqrt\pi}.
\end{align*}
\end{corollary}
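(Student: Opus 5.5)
The plan is to combine three exact identities already in hand: \eqref{eq:prefix-imbalance-Q}, which expresses \(\mathscr D_r\) through \(\Qp(2(v_r+t))\); \eqref{eq:D-layer-exact-new}, which evaluates \(\mathscr D_r\) at the central cut; and the Gaussian width asymptotic \eqref{eq:C-gaussian-new} at \(z=0\).

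\emph{Size of \(n_r\).} Since \(0\le t_r\le L_r=4a_r-3\) and \(v_r=4a_r-r-2\), we have \(n_r=2(v_r+t_r)\). Because \(a_r\sim\frac{8}{3}4^r\), this gives \(2v_r\sim\frac{64}{3}4^r\). By \eqref{eq:theta-gaussian-new} at \(z=0\) (note \(k_r(0)=r+1\)), \(t_r/L_r\to\Phi_{\rm N}(0)=\tfrac12\), so \(2t_r\sim L_r\sim\frac{32}{3}4^r\). Adding the two contributions yields \(n_r\sim 32\cdot4^r\), which is \eqref{eq:canonical-negative-n}.

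\emph{The error at \(n_r\).} Applying \eqref{eq:prefix-imbalance-Q} with \(t=t_r\) gives
\[
\Qp(n_r)=\Qp(2v_r)+t_r+\mathscr D_r(t_r).
\]
By \eqref{eq:arch-endpoints}, \(V(v_r)=2a_r\), so \(\Qp(2v_r)=4a_r-1\). Therefore
\[
\Qp(n_r)-\tfrac{n_r}{2}=4a_r-1+t_r+\mathscr D_r(t_r)-v_r-t_r=r+1+\mathscr D_r(t_r).
\]
Now \eqref{eq:D-layer-exact-new} gives \(\mathscr D_r(t_r)=1-C_{r,r+1}\) exactly, and \eqref{eq:C-gaussian-new} at \(z=0\) gives \(C_{r,r+1}\sim\frac{16}{3\sqrt\pi}4^r/\sqrt r\). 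The term \(r+2\) is negligible against this, which proves \eqref{eq:canonical-negative-error}.

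\emph{Normalization.} Since \(\log n_r\sim r\log4\), we get
\[
\frac{\sqrt{\log n_r}}{n_r}\Bigl(\Qp(n_r)-\frac{n_r}{2}\Bigr)\to-\frac{16}{3\sqrt\pi}\cdot\frac{\sqrt{\log4}}{32}=-\frac{\sqrt{\log4}}{6\sqrt\pi}.
\]
In base two, \(\log_2 n_r\sim 2r\), and the limit becomes
\[
-\frac{16}{3\sqrt\pi}\cdot\frac{\sqrt2}{32}=-\frac{1}{3\sqrt{2\pi}}.
\]
The liminf and limsup inequalities follow immediately by restricting to the subsequence \((n_r)\).

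The main point needing care is confirming that \(t_r\) lies in the range \(0\le t\le L_r\) where \eqref{eq:prefix-imbalance-Q} applies; this holds by \eqref{eq:negative-layer-cuts}, since \(1\le\theta_{r,k}\le L_r\) for \(k\le 2r+2\). The only asymptotic input beyond the exact identities is the \(z=0\) case of \cref{thm:negative-gaussian-skeleton}.
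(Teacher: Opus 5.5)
Your proposal is correct and follows essentially the same route as the paper. The argument rests on the exact identity $\Qp(n_r)-n_r/2=r+1+\mathscr D_r(t_r)$ together with the $z=0$ case of \cref{thm:negative-gaussian-skeleton}; you just unpack \eqref{eq:D-gaussian-new} into $\mathscr D_r(t_r)=1-C_{r,r+1}$ and \eqref{eq:C-gaussian-new}, and you check the admissible range of $t$ exactly rather than only for large $r$. One harmless slip: $\theta_{r,2r+2}=0$, so the correct bound is $0\le\theta_{r,k}\le\theta_{r,0}=L_r-R-1$ rather than $1\le\theta_{r,k}$, and that is all your argument needs.
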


\begin{proof}
At \(z=0\), \eqref{eq:negative-central-index} gives
\(k_r(0)=r+1\).  Hence
\cref{thm:negative-gaussian-skeleton} yields
\[
 \frac{t_r}{L_r}\longrightarrow\frac12,
 \qquad
 \mathscr D_r(t_r)
 \sim-\frac{16}{3\sqrt\pi}\frac{4^r}{\sqrt r}.
\]
For all sufficiently large \(r\), this places \(t_r\) strictly inside the
negative arch, so \eqref{eq:prefix-imbalance-Q} applies.  From
\eqref{eq:UV-branches}, \eqref{eq:arch-endpoints}, and
\eqref{eq:arch-coordinates},
\[
 \Qp(2v_r)-v_r
 =2V(v_r)-1-v_r
 =4a_r-1-(4a_r-r-2)=r+1.
\]
Therefore the exact identity
\[
 \Qp(n_r)-\frac{n_r}{2}
 =r+1+\mathscr D_r(t_r)
\]
holds.  Since \(r+1=o(4^r/\sqrt r)\), this proves
\eqref{eq:canonical-negative-error}.  Moreover,
\[
 v_r\sim\frac{32}{3}4^r,
 \qquad
 L_r\sim\frac{32}{3}4^r,
 \qquad
 t_r\sim\frac{16}{3}4^r,
\]
which gives \eqref{eq:canonical-negative-n} and
\(\log n_r=r\log4+O(1)\).  Combining these asymptotics proves
\eqref{eq:canonical-negative-natural-limit}.  The base-two form follows
from \(\log_2n=(\log n)/(\log2)\), and both final bounds follow along the
subsequence \(n_r\).
\end{proof}

\begin{remark}[Relation to Clo\^{\i}tre's global bounds]
Clo\^{\i}tre already established explicit global lower and upper bounds for
the normalized limsup \cite[Thm.~1.1(iii)--(iv)]{Cloitre2026}.  The present
corollary does not improve or determine that global limsup; it instead
identifies a canonical negative-arch subsequence together with its sign,
phase, and exact asymptotic coefficient.
\end{remark}

Pointwise or \(L^1\) convergence of the full raw negative-arch profile
remains open; weak convergence against Lipschitz tests is proved below.

\begin{figure}[htbp]
\centering
\includegraphics[width=0.88\textwidth]{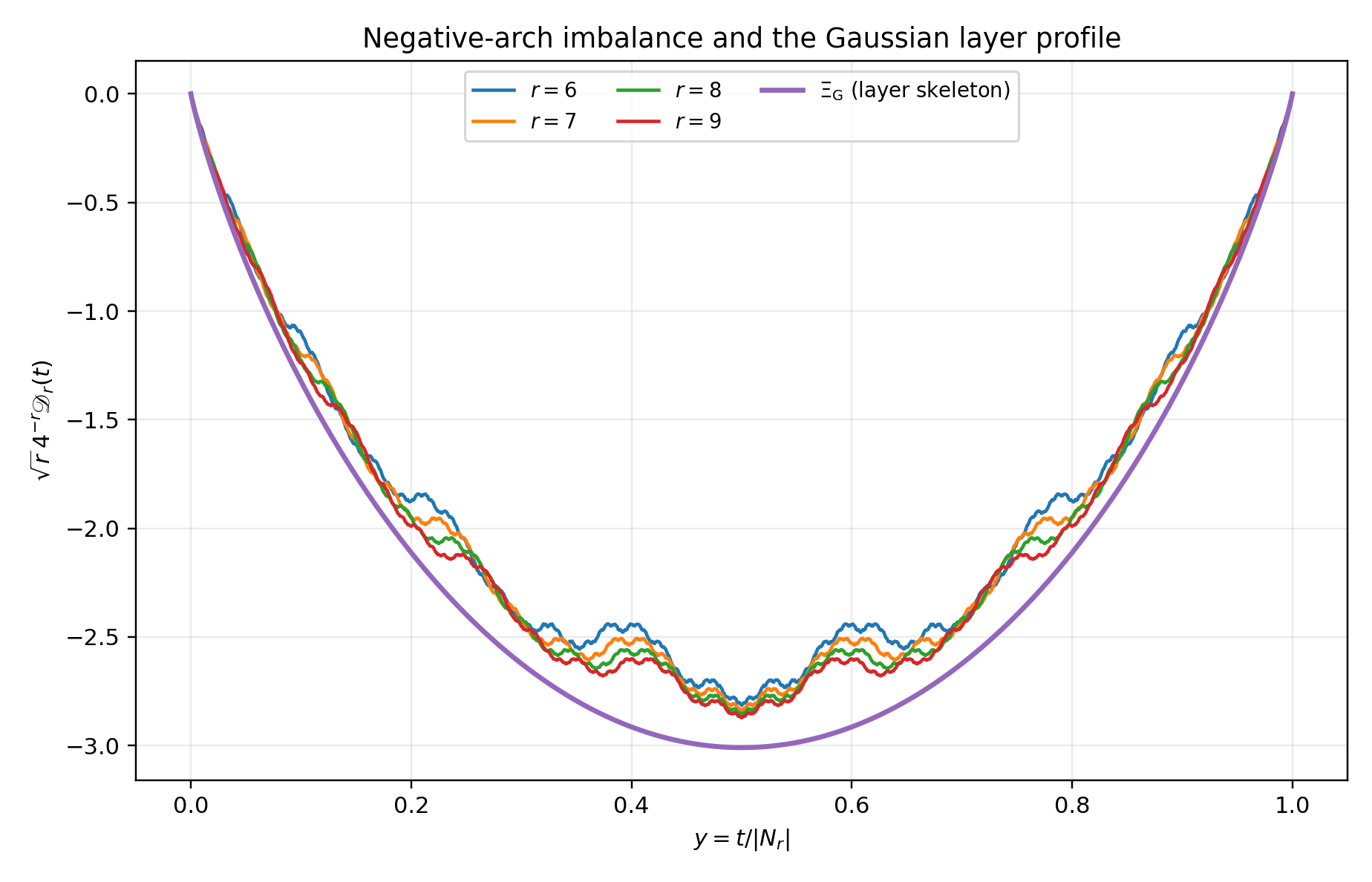}
\caption{Exact normalized negative-arch prefix imbalances and the Gaussian
layer profile.  The layer skeleton and the sharp maximal amplitude are
rigorous; the pointwise full-path convergence visible in the figure is not
asserted.}
\label{fig:negative-gaussian-profile}
\end{figure}

\begin{lemma}[Quadratic layer mass]
\label{lem:quadratic-layer-mass}
One has
\begin{equation}
{
Q_r:=\sum_{k=0}^{2r+2}C_{r,k}^2
\sim
\frac{256}{9\sqrt{2\pi}}\,
\frac{16^r}{\sqrt r}.
}
\label{eq:C-square-asymptotic}
\end{equation}
\end{lemma}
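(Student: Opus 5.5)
The plan is to expand the square and collapse the inner sum over \(k\) by Vandermonde's identity. Since \(C_{r,k}=\sum_{i=0}^{R}\binom{2i}{k}\) with \(R=r+1\), and \(C_{r,k}=0\) for \(k>2R\), we get
\[
 Q_r=\sum_{k\ge0}C_{r,k}^2
 =\sum_{i,j=0}^{R}\sum_{k\ge0}\binom{2i}{k}\binom{2j}{k}
 =\sum_{i,j=0}^{R}\binom{2i+2j}{2i},
\]
using \(\sum_k\binom ak\binom bk=\binom{a+b}{a}\). This exact double sum replaces any local-limit analysis of the individual layers. It is the quadratic analogue of the dominated-convergence argument in the proof of \cref{thm:negative-gaussian-skeleton}.

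Next, reindex from the top corner by putting \(i=R-a\) and \(j=R-b\) with \(a,b\ge0\). Each term becomes \(\binom{4R-2a-2b}{2R-2a}\). For fixed \(a,b\), the lower index differs from half the upper index by \(b-a=O(1)\). Stirling's formula, or the local de Moivre--Laplace theorem with deviation \(O(1)\ll\sqrt R\), then gives
\[
 \binom{4R-2a-2b}{2R-2a}\sim\frac{2^{4R-2a-2b}}{\sqrt{\pi(2R-a-b)}}\sim\frac{16^R\,4^{-a-b}}{\sqrt{2\pi R}}.
\]
Summing the weights gives \(\sum_{a,b\ge0}4^{-a-b}=(4/3)^2=16/9\). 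Hence
\[
 Q_r\sim\frac{16}{9}\,\frac{16^{R}}{\sqrt{2\pi R}}=\frac{256}{9\sqrt{2\pi}}\,\frac{16^r}{\sqrt r},
\]
because \(16^R=16\cdot16^r\) and \(\sqrt R\sim\sqrt r\).

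The only real obstacle is justifying the interchange of limit and summation, that is, finding a dominating majorant uniform in \(R\). I would split the range at \(a+b\le R\) versus \(a+b>R\).
\begin{itemize}
\item On \(a+b\le R\), bound each binomial by the central one, \(\binom{2n}{m}\le\binom{2n}{n}\le 4^n/\sqrt{\pi n}\) with \(n=2R-a-b\ge R\). This gives the majorant \(16^R4^{-a-b}/\sqrt{\pi R}\), which is summable after normalization by \(16^R/\sqrt R\). Dominated convergence then applies to the normalized sum over this region.
\item On \(a+b>R\), each term is at most \(2^{4R-2a-2b}\le 4^{R}\), and there are \(O(R^2)\) such terms. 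Their total is \(O(R^24^R)=o(16^R/\sqrt R)\).
\end{itemize}
Combining the two ranges gives \eqref{eq:C-square-asymptotic}.
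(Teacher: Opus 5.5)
Your proof is correct, but it takes a different route from the paper's proof of this lemma.

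\textbf{The paper's route.} It squares the local Gaussian asymptotic \eqref{eq:C-gaussian-new} for the individual widths $C_{r,k_r(z)}$. It then treats the sum over $k$ as a Riemann sum with mesh $dz\sim\sqrt{2/r}$ and evaluates $\int e^{-z^2}\,dz=\sqrt\pi$. The range $|k-(r+1)|>M\sqrt r$ is dismissed by an appeal to standard binomial tail bounds.

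\textbf{Your route.} You remove the layer index exactly through Vandermonde's identity. Everything then reduces to the corner sum $\sum_{a,b}\binom{4R-2a-2b}{2R-2a}$ with geometric weights $4^{-a-b}$. Your dominated-convergence split at $a+b=R$ is complete and needs only $\binom{2n}{m}\le\binom{2n}{n}\le4^n/\sqrt{\pi n}$ and $\binom{2n}{m}\le4^n$. It is therefore more self-contained than the paper's sketch of the tails, and it does not rely on \cref{thm:negative-gaussian-skeleton} at all.

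This is in fact the method the paper itself adopts in \cref{lem:QP-corner-asymptotics}. There the same double sum is expanded through the $R^{-1}$ term, and in the appendix through the $R^{-2}$ term. So your route is the one that scales to the Edgeworth corrections.

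\textbf{What the Gaussian route buys.} It localizes the mass in $k$: it shows $\sum_k C_{r,k}^2$ is concentrated on the window $|k-(r+1)|=O(\sqrt r)$. The paper reuses this fact in \cref{prop:integrated-microlayer-cancellation}. Your exact total does not show this by itself, but combining it with the central local asymptotic gives it at once. All terms are nonnegative, and the central Riemann sum over $|z|\le M$ accounts asymptotically for a fraction $\pi^{-1/2}\int_{-M}^{M}e^{-z^2}\,dz$ of your total. Hence the complementary tail is asymptotically the remaining fraction, which tends to $0$ as $M\to\infty$.
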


\begin{proof}
Use the central scaling \eqref{eq:negative-central-index}.  By
\eqref{eq:C-gaussian-new},
\[
C_{r,k_r(z)}^2
\sim
\frac{256}{9\pi}\frac{16^r}{r}e^{-z^2}.
\]
One unit in $k$ corresponds to $dz\sim\sqrt{2/r}$.  The central Riemann sum
therefore tends to
\[
\frac{256}{9\pi}\frac{16^r}{r}
\sqrt{\frac r2}\int_{-\infty}^{\infty}e^{-z^2}\,dz
=
\frac{256}{9\sqrt{2\pi}}\frac{16^r}{\sqrt r}.
\]
Standard Gaussian binomial tails make the complement of a fixed central
window uniformly negligible, after which the window is sent to infinity.
\end{proof}

\subsection{Centroid control inside the exact negative layers}

The Gaussian layer nodes do not by themselves determine the area between two
successive cuts.  The missing datum is the order of the degrees inside each
layer.  The companion rule \(\mathsf R\) supplies exactly enough structure to
control this order on the integrated level.

For a finite integer word \(a=(a_1,\ldots,a_n)\), write
\begin{equation}
 |a|=n,\qquad \|a\|=\sum_{j=1}^n a_j,\qquad
 J(a)=\sum_{j=1}^n j a_j,
\end{equation}
and define its centered degree moment
\begin{equation}
 {
 \mathfrak b(a):=J(a)-\frac{n+1}{2}\|a\|.
 }
 \label{eq:centered-degree-bias}
\end{equation}
Thus \(\mathfrak b(a)\) measures the displacement of the degree mass from the
midpoint of the word.

Recall the child order \eqref{eq:companion-permutation-explicit}.  If
\(0\le a<b<L\), then
\begin{equation}
 a\text{ precedes }b\text{ in }\psi(L)
 \quad\Longleftrightarrow\quad a\text{ is even}.
 \label{eq:pair-order-parity}
\end{equation}

Put \(b_{L,k}:=\mathfrak b(W_{L,k})\).  The length and degree mass of
\(W_{L,k}\) are given by \eqref{eq:single-root-length-mass}.

\begin{lemma}[Exact single-root centroid recursion]
\label{lem:single-root-centroid-recursion}
For \(k\ge1\),
\begin{equation}
 {
 b_{L,k}
 =\sum_{d=0}^{L-1}b_{d,k-1}+\varepsilon_{L,k},
 }
 \label{eq:single-root-bias-recursion}
\end{equation}
where
\begin{equation}
 {
 \varepsilon_{L,k}
 =\frac{1}{2k}
 \sum_{0\le a<b<L}
 (-1)^a(b-a)
 \binom{a}{k-1}\binom{b}{k-1}.
 }
 \label{eq:parity-forcing-centroid}
\end{equation}
Moreover, for every compact interval \(I\Subset(0,1)\), uniformly for
\(k/L\in I\),
\begin{equation}
 {
 |b_{L,k}|
 \ll_I
 \frac1L\binom Lk\binom L{k+1}.
 }
 \label{eq:single-root-centroid-bound}
\end{equation}
\end{lemma}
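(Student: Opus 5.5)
The plan has two parts. First I derive \eqref{eq:single-root-bias-recursion}--\eqref{eq:parity-forcing-centroid} from a general concatenation formula for $\mathfrak b$. Then I bound the forcing $\varepsilon_{L,k}$, unroll the recursion completely, and resum with Vandermonde-type identities.

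\emph{Step 1: the exact recursion.} Let $a=a^{(1)}\cdots a^{(p)}$ be a concatenation of blocks with lengths $n_i$ and masses $m_i$. The block $a^{(i)}$ sits at offset $O_i=\sum_{j<i}n_j$, so $J(a)=\sum_i(J(a^{(i)})+O_im_i)$. Subtracting $\frac{n+1}{2}\|a\|$ gives
\[
 \mathfrak b(a)=\sum_i\mathfrak b(a^{(i)})+\tfrac12\sum_{i<j}\bigl(m_jn_i-m_in_j\bigr).
\]
As in the proof of \cref{prop:companion-forest-encoding}, $W_{L,k}=\prod_{d\in\psi(L)}W_{d,k-1}$, with $n_d=\binom d{k-1}$ and $m_d=\binom dk$ by \eqref{eq:single-root-length-mass}. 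By \eqref{eq:pair-order-parity}, for a pair $a<b$ the block of $a$ comes first exactly when $a$ is even. The pair $\{a,b\}$ therefore contributes
\[
 \tfrac12(-1)^a\Bigl(\tbinom bk\tbinom a{k-1}-\tbinom ak\tbinom b{k-1}\Bigr).
\]
The identity $\binom xk=\binom x{k-1}\frac{x-k+1}{k}$ turns the bracket into $\frac{b-a}{k}\binom a{k-1}\binom b{k-1}$, which is \eqref{eq:parity-forcing-centroid}.

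\emph{Step 2: size of the forcing.} Fix $b$ and put $f(a)=(b-a)\binom a{k-1}$. I will pair $a=2i$ with $a=2i+1$, so the alternating sum $\sum_{a<b}(-1)^af(a)$ equals $-\sum_i\Delta f(2i)$. This is $\frac12(f(0)-f(b))$ up to an error $O(\sum_a|\Delta^2f(a)|)$, plus a boundary term of size $O(f(b-1))=O(\binom{b-1}{k-1})$.

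In the bulk, $\binom a{k-2}/\binom a{k-1}$ stays bounded, so $\Delta f$ and $\Delta^2f$ are dominated by $\binom a{k-1}$ times bounded factors. Moreover $\binom a{k-1}$ grows geometrically in $a$ at ratio $a/(a-k+1)>1$, so $\sum_{a<b}\binom a{k-1}\ll\binom b{k-1}$. Together these give $\bigl|\sum_{a<b}(-1)^af(a)\bigr|\ll\binom b{k-1}$.

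Summing against $\binom b{k-1}$ over $b<L$ again gives a geometric series, dominated by its last term. Hence
\[
 |\varepsilon_{L,k}|\ll\frac1k\binom L{k-1}^2\asymp_I\frac1L\binom Lk\binom L{k+1}.
\]
I need this bound in a form valid uniformly in all $(d,m)$, not only for $m/d\in I$, because the unrolling below visits every parameter. Near the edges I expect an extra polynomial factor at worst, and the binomial tails make that negligible.

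\emph{Step 3: unrolling and resummation.} Iterating \eqref{eq:single-root-bias-recursion} gives
\[
 b_{L,k}=\varepsilon_{L,k}+\sum_{j\ge1}\sum_{d<L}\binom{L-1-d}{j-1}\varepsilon_{d,k-j},
\]
because the number of descending chains $L>d_1>\cdots>d_j=d$ is $\binom{L-1-d}{j-1}$. I will insert the bound from Step 2, $|\varepsilon_{d,m}|\ll d^{-1}\binom dm\binom d{m+1}$. One binomial factor, $\binom d{k-j+1}$, is replaced by its value relative to $\binom L{k+1}$. The remaining sum is then evaluated with $\sum_d\binom{L-1-d}{j-1}\binom d{k-j}=\binom Lk$.

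The main obstacle is exactly here. A naive induction with a fixed constant fails: the ratio $\sum_dB_{d,k-1}/B_{L,k}$, with $B_{L,k}=\frac1L\binom Lk\binom L{k+1}$, behaves like $x/((1-x)^2(2-x))$ for $x=k/L$, and this is not below $1$ near $x=1$. So I must show that the $j$-th term of the unrolled sum decays geometrically in $j$, uniformly for $k/L\in I$. My first attempt will compare $\binom d{k-j+1}/\binom L{k+1}$ along the dominant range $d\approx L(k-j)/k$ and check that the product of the two binomial ratios loses a fixed factor per step. If that fails, the fallback is to exploit the sign alternation $(-1)^a$ a second time and obtain an extra factor $1/L$ in $\varepsilon$ away from the diagonal.
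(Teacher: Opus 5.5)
Your Step 1 is correct and is exactly the paper's derivation: the concatenation identity, the parity order in $\psi(L)$, and $\binom xk=\binom x{k-1}\frac{x-k+1}{k}$.

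The gap is in the bound \eqref{eq:single-root-centroid-bound}. Step 3 never establishes the geometric decay in $j$; it only states an attempt and a fallback. The attempt as described would also not give an upper bound. The Vandermonde weights $\binom{L-1-d}{j-1}\binom d{k-j}$ are multiplied by $\frac1d\binom d{k-j+1}$, which grows geometrically in $d$. So the weighted sum is controlled by the upper end of the $d$-range, not by the typical point $d\approx L(k-j)/k$, and evaluating there underestimates it.

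Your diagnosis of the obstacle is also miscomputed. One has $B_{L-1,k-1}/B_{L,k}=k(k+1)/(L(L-1))\approx x^2$, and $B_{d-1,k-1}/B_{d,k-1}\approx(1-x)^2$ near $d=L$. Hence $\sum_{d<L}B_{d,k-1}/B_{L,k}\to x/(2-x)$, which is below $1$ on all of $(0,1)$. This contraction is what the paper's strong induction uses: geometric concentration of $A_d$ at the top endpoint, with the exponentially small $d$ split off. The only real difficulty is uniformity at the edges, and your unrolled sum can avoid it entirely.

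The missing step is short. First, the crude bound is already uniform in all $(d,m)$, so the alternating pairing in Step 2 is unnecessary; the paper also uses only absolute values. Since $\sum_{a<b}(b-a)\binom a{m-1}=\binom{b+1}{m+1}\le\binom d{m+1}$ for $b<d$, and $\sum_{b<d}\binom b{m-1}=\binom dm$, you get
\[
 |\varepsilon_{d,m}|\le\frac{1}{2m}\binom dm\binom d{m+1}\qquad(d,m\ge1).
\]
Second, every chain $L>d_1>\cdots>d_j=d$ has $d\le L-j$, and $\binom d{k-j+1}$ is increasing in $d$. Pull that factor out at its maximum, and only then apply Vandermonde:
\[
 \sum_{d}\binom{L-1-d}{j-1}|\varepsilon_{d,k-j}|
 \le\frac{1}{2(k-j)}\binom{L-j}{k+1-j}\binom Lk
 \le\frac{1}{2(k-j)}\Bigl(\frac{k+1}{L}\Bigr)^{j}\binom Lk\binom L{k+1}.
\]
The case $j=0$ is just the bound on $\varepsilon_{L,k}$ itself. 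For $k/L\in I$ and $L$ large, $(k+1)/L$ stays bounded away from $1$. Summing over $0\le j<k$ and splitting at $j=k/2$ then gives $|b_{L,k}|\ll_I k^{-1}\binom Lk\binom L{k+1}\asymp_I L^{-1}\binom Lk\binom L{k+1}$.

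With this inserted, your unrolling route becomes a complete proof. It is arguably cleaner than the paper's induction, because the all-positive unrolled sum needs no induction hypothesis carried over enlarged intervals of $k/L$.
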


\begin{proof}
For two integer words \(u,v\), direct expansion of
\eqref{eq:centered-degree-bias} gives the concatenation identity
\begin{equation}
 \mathfrak b(uv)
 =\mathfrak b(u)+\mathfrak b(v)
 +\frac12\bigl(|u|\,\|v\|-|v|\,\|u\|\bigr).
 \label{eq:bias-concatenation}
\end{equation}
Since
\[
 W_{L,k}=W_{d_1,k-1}\cdots W_{d_L,k-1},
 \qquad (d_1,\ldots,d_L)=\psi(L),
\]
repeated use of \eqref{eq:bias-concatenation}, together with
\eqref{eq:single-root-length-mass}, yields
\[
 b_{L,k}=\sum_{d<L}b_{d,k-1}
 +\frac12\sum_{i<j}
 \left[
 \binom{d_i}{k-1}\binom{d_j}{k}
 -\binom{d_j}{k-1}\binom{d_i}{k}
 \right].
\]
For an unordered pair \(a<b\), \eqref{eq:pair-order-parity} determines its
sign.  Using
\[
 \binom d k=\binom d{k-1}\frac{d-k+1}{k}
\]
reduces the double sum to \eqref{eq:parity-forcing-centroid}.

We record the ratio estimate leading to
\eqref{eq:single-root-centroid-bound}.  Put \(N=\binom Lk\).  If
\(a=L-1-q\) and \(k/L\in I\), then for some \(\rho_I<1\),
\[
 \binom a{k-1}
 \ll_I \frac{k}{L}N\rho_I^q,
 \qquad
 \sum_{b>a}(b-a)\binom b{k-1}\le qN.
\]
Therefore
\begin{equation}
 |\varepsilon_{L,k}|
 \ll_I \frac{N^2}{L}
 \asymp_I
 \frac1L\binom Lk\binom L{k+1}.
 \label{eq:epsilon-central-bound}
\end{equation}
The summation term in \eqref{eq:single-root-bias-recursion} is stable under
the same estimate.  Indeed, with
\(A_d=\binom d{k-1}\binom d k\),
\[
 \frac{A_{d-1}}{A_d}
 =\frac{(d-k+1)(d-k)}{d^2},
\]
so the mass of \(A_d\) is geometrically concentrated at the upper endpoint
whenever \(k/L\) stays in a compact subinterval of \((0,1)\).  A strong
induction in \(L\), splitting off the exponentially small values of \(d\)
for which \((k-1)/d\) leaves a slightly larger compact interval, gives
\[
 \sum_{d<L}|b_{d,k-1}|
 \ll_I \frac1L\binom Lk\binom L{k+1}.
\]
Together with \eqref{eq:epsilon-central-bound}, this proves
\eqref{eq:single-root-centroid-bound}.
\end{proof}

The complete depth-\(k\) word of the companion forest is the concatenation
of the single-root words \(W_{2i,k}\), \(0\le i\le r+1\).  Denote it by
\(a^{(r,k)}\), and put
\begin{equation}
 \mathfrak b_{r,k}:=\mathfrak b(a^{(r,k)}).
 \label{eq:full-layer-bias}
\end{equation}
Repeated use of \eqref{eq:bias-concatenation} gives the exact decomposition
\begin{equation}
 {
 \mathfrak b_{r,k}
 =\sum_{i=0}^{r+1}b_{2i,k}
 +\frac{1}{2(k+1)}
 \sum_{0\le i<j\le r+1}
 (2j-2i)\binom{2i}{k}\binom{2j}{k}.
 }
 \label{eq:forest-bias-decomposition}
\end{equation}
The second term is particularly transparent: it is only a first moment of
the root separation.

\begin{proposition}[Integrated micro-layer cancellation]
\label{prop:integrated-microlayer-cancellation}
Let
\[
 Q_r:=\sum_{k=0}^{2r+2}C_{r,k}^2.
\]
Then
\begin{equation}
 {
 \sum_{k=0}^{2r+2}|\mathfrak b_{r,k}|=o(Q_r).
 }
 \label{eq:total-layer-bias-small}
\end{equation}
More precisely, on every fixed central Gaussian window
\(|k-(r+1)|\le M\sqrt r\),
\begin{equation}
 |\mathfrak b_{r,k}|
 \ll_M \frac1r C_{r,k}C_{r,k+1}.
 \label{eq:central-forest-bias}
\end{equation}
\end{proposition}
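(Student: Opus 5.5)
The plan is to bound the two pieces of the exact decomposition \eqref{eq:forest-bias-decomposition} separately: first on a central Gaussian window $|k-(r+1)|\le M\sqrt r$, where I aim for \eqref{eq:central-forest-bias}, and then on the complement, where only a crude bound is needed. After that, \eqref{eq:total-layer-bias-small} follows by summing and comparing with \cref{lem:quadratic-layer-mass}.

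For the root-separation term, set $B_j:=\binom{2j}{k}$. In the central window the ratio $B_{j-1}/B_j$ is roughly $\tfrac14\cdot\bigl(1+O(r^{-1/2})\bigr)^{O(1)}$, so $B_j$ is geometrically concentrated at $j=r+1$, exactly as in the proof of \cref{thm:negative-gaussian-skeleton}. This gives
\[
\sum_{i<j}(2j-2i)B_iB_j\ll B_{r+1}^2\sum_{p,q\ge0}(p+q)4^{-p-q}\ll C_{r,k}^2 .
\]
With the prefactor $1/(2(k+1))\asymp 1/r$, the term is $\ll C_{r,k}^2/r$. Since $C_{r,k+1}/C_{r,k}\to1$ uniformly on the window by \eqref{eq:C-gaussian-new}, this is $\ll_M C_{r,k}C_{r,k+1}/r$.

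For the single-root sum $\sum_i b_{2i,k}$, I apply \eqref{eq:single-root-centroid-bound} to each root. When $k\approx r$ and $2i$ is near $2r+2$, the ratio $k/(2i)$ lies in a compact subinterval of $(0,1)$ around $1/2$. This gives $|b_{2i,k}|\ll \frac1{2i}\binom{2i}{k}\binom{2i}{k+1}$. For the small roots, say $2i\le (1+\delta)k$, where the ratio leaves the compact interval, I use the trivial bound $|\mathfrak b(a)|\le |a|\,\|a\|$. The binomials there are exponentially smaller than $16^r$ relative to the top terms, so these roots are negligible. Summing with the same geometric concentration yields $\ll C_{r,k}C_{r,k+1}/r$, which proves \eqref{eq:central-forest-bias}.

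For the global statement, fix $M$. Summing \eqref{eq:central-forest-bias} over the window and using Cauchy--Schwarz, $\sum_k C_{r,k}C_{r,k+1}\le Q_r$, so the window contributes $O_M(Q_r/r)=o(Q_r)$. Outside the window I use the trivial bound $|\mathfrak b_{r,k}|\le |a^{(r,k)}|\,\|a^{(r,k)}\|=C_{r,k}C_{r,k+1}$. By the Gaussian tail estimates already invoked in \cref{lem:quadratic-layer-mass}, the outside sum is at most $\eta(M)Q_r$ with $\eta(M)\to0$ as $M\to\infty$. Letting $r\to\infty$ and then $M\to\infty$ gives \eqref{eq:total-layer-bias-small}.

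The main obstacle I expect is making the single-root bound \eqref{eq:single-root-centroid-bound} usable uniformly when summed over roots. Its implied constant depends on the compact interval $I$, so the roots must be split at a threshold where $k/(2i)$ leaves $I$. I then have to check that the trivial bound on the discarded roots really is $o\bigl(C_{r,k}C_{r,k+1}/r\bigr)$ uniformly on the window. I would handle this by an entropy comparison: $\binom{2i}{k}^2/\binom{2r+2}{k}^2$ decays exponentially once $2i<(1+\delta)k$, because the top root already gives roughly $16^r$.
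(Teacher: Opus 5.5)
Your proposal is correct and follows essentially the same route as the paper: you split $\mathfrak b_{r,k}$ via \eqref{eq:forest-bias-decomposition}, use the geometric concentration of the top even roots to bound the cross term (prefactor $1/(k+1)\asymp 1/r$ times a bounded mean root separation) and the internal terms via \eqref{eq:single-root-centroid-bound}, and then handle the tails with the trivial centroid bound and the Gaussian tail estimate behind \cref{lem:quadratic-layer-mass}, letting $r\to\infty$ before $M\to\infty$. Your explicit treatment of the low roots where $k/(2i)$ leaves the compact interval, and the Cauchy--Schwarz step $\sum_k C_{r,k}C_{r,k+1}\le Q_r$, spell out details that the paper leaves implicit.
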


\begin{proof}
On the stated central window, the contribution of the top roots
\(2r+2,2r,\ldots\) decreases geometrically: for fixed root deficit,
\[
 \frac{\binom{2i-2}{k}}{\binom{2i}{k}}
 =\frac{(2i-k)(2i-k-1)}{(2i)(2i-1)}
 =\frac14+O_M(r^{-1/2}).
\]
Thus \(C_{r,k}\) and \(C_{r,k+1}\) are comparable with their top-root
terms, and the root-deficit distribution has a uniformly bounded first
moment.  Applying \eqref{eq:single-root-centroid-bound} to the internal
terms in \eqref{eq:forest-bias-decomposition} gives
\[
 \sum_i|b_{2i,k}|
 \ll_M r^{-1}C_{r,k}C_{r,k+1}.
\]
The exact cross term in \eqref{eq:forest-bias-decomposition} has the same
bound because \(k+1\asymp r\) and the weighted mean root separation is
bounded.  This proves \eqref{eq:central-forest-bias}.

For the tails use the trivial centroid estimate
\(|\mathfrak b_{r,k}|\le \tfrac12 C_{r,k}C_{r,k+1}\).  The Gaussian tail
bound underlying \cref{lem:quadratic-layer-mass} implies that, after first
letting \(r\to\infty\) and then \(M\to\infty\), the contribution of
\(|k-(r+1)|>M\sqrt r\) is arbitrarily small compared with \(Q_r\).
Together with \eqref{eq:central-forest-bias}, this proves
\eqref{eq:total-layer-bias-small}.
\end{proof}

There is also an exact geometric interpretation of
\(\mathfrak b_{r,k}\).  Put
\[
 n=C_{r,k},\qquad s=C_{r,k+1},\qquad \ell=n+s.
\]
The binary factor of layer \(k\) is
\(1^{a_1}0\cdots1^{a_n}0\), where
\(a=a^{(r,k)}\) and \(\sum a_j=s\).  Summing the negative imbalance
\(-\mathscr D_r\) over this factor gives the following identity.

\begin{lemma}[Exact layer-area identity]
\label{lem:exact-layer-area}
For \(0\le k\le2r+2\), the segment between \(\theta_{r,k+1}\) and
\(\theta_{r,k}\) satisfies
\begin{equation}
 {
 -\sum_{t=\theta_{r,k+1}}^{\theta_{r,k}-1}\mathscr D_r(t)
 =\frac{\ell^2-3\ell}{2}+2\mathfrak b_{r,k}.
 }
 \label{eq:exact-layer-area}
\end{equation}
If \(q_{r,k}\) is the affine chord joining the two exact endpoint heights,
then the difference between the raw discrete area and the discrete chord
area is exactly
\begin{equation}
 {
 \sum_{t=\theta_{r,k+1}}^{\theta_{r,k}-1}
 \bigl(-\mathscr D_r(t)-q_{r,k}(t)\bigr)
 =2\mathfrak b_{r,k}-C_{r,k+1}.
 }
 \label{eq:raw-chord-area-error}
\end{equation}
\end{lemma}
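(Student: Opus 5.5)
The plan is to reduce everything to an elementary prefix-sum computation on the single binary factor $\operatorname{enc}(a)$, $a=a^{(r,k)}$, that occupies the positions between the two cuts, taking the initial height from \cref{thm:negative-layer-cuts}. Write $h(t):=-\mathscr D_r(t)$, the excess of zeros over ones. By \eqref{eq:layer-mesh-new} the segment $\theta_{r,k+1}\le t\le\theta_{r,k}-1$ has exactly $\ell=n+s$ points, with $n=C_{r,k}$ and $s=C_{r,k+1}$. By the reverse breadth-first factorization \eqref{eq:whole-negative-word-factorization} and the cut computation in the proof of \cref{thm:negative-layer-cuts}, the bits read on this segment are exactly the layer-$k$ factor $1^{a_1}0\,1^{a_2}0\cdots1^{a_n}0$, with $\|a\|=s$. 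By \eqref{eq:D-layer-exact-new}, the initial height is $h(\theta_{r,k+1})=C_{r,k+1}-1=s-1$. For the bottom case $k=2r+2$ we have $\theta_{r,2r+3}=-1$, and the cyclic convention \eqref{cyclic-prefix-convention} is designed precisely so that this remains true: $h(-1)=-1=s-1$ since $s=0$. The first bits of $N_r$ are then read as the factor with its leading zero supplied by the $0$ in $0N_r$. I would check this indexing carefully, since it is the only non-mechanical point.

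Next, for $0\le p\le\ell-1$, write $h(\theta_{r,k+1}+p)=(s-1)+g(p)$, where $g(p)$ is the zero-minus-one excess in the first $p$ symbols of $\operatorname{enc}(a)$. Then
\[
\sum_{p=0}^{\ell-1}g(p)=\sum_{q=1}^{\ell}(\ell-q)\,\epsilon_q ,
\]
where $\epsilon_q=+1$ for a zero and $-1$ for a one at position $q$. I will compute this by blocks. The zero closing block $j$ sits at position $z_j=j+\sum_{i\le j}a_i$, and the ones of block $j$ occupy positions $z_j-a_j,\dots,z_j-1$.

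Summing, the zero contributions give $n\ell-\sum_j z_j$. The ones contribute
\[
-\Bigl(s\ell-\sum_j\bigl(a_jz_j-\tbinom{a_j+1}{2}\bigr)\Bigr).
\]
I then expand $\sum_j z_j$ and $\sum_j a_jz_j$ in terms of $J(a)$, $\|a\|=s$, $|a|=n$, and the identity
\[
\sum_j a_j\sum_{i\le j}a_i=\frac{s^2+\sum_j a_j^2}{2}.
\]
The $\sum_j a_j^2$ terms should cancel against the $\binom{a_j+1}{2}$ terms, leaving a linear expression in $J(a)$. Adding $\ell(s-1)$ and substituting $J(a)=\mathfrak b_{r,k}+\tfrac{n+1}{2}s$ should produce exactly $\tfrac{\ell^2-3\ell}{2}+2\mathfrak b_{r,k}$. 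Verifying this algebra, in particular that all $s^2$, $ns$ and quadratic-in-$a_j$ terms collapse to $(\ell^2-3\ell)/2$, is the main bookkeeping obstacle. As a safeguard I will test it on the words $a=(0)$, $a=(1)$ and $a=(1,0)$.

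For \eqref{eq:raw-chord-area-error}, the chord $q_{r,k}$ interpolates the heights $s-1$ at $\theta_{r,k+1}$ and $n-1$ at $\theta_{r,k}$, the latter again from \eqref{eq:D-layer-exact-new}. Its slope is therefore $(n-s)/\ell$, and its discrete sum over the $\ell$ points $p=0,\dots,\ell-1$ is
\[
\ell(s-1)+\frac{n-s}{\ell}\cdot\frac{\ell(\ell-1)}{2}=\ell(s-1)+\frac{(n-s)(\ell-1)}{2}.
\]
Subtracting this from \eqref{eq:exact-layer-area} and simplifying with $\ell=n+s$ should give $2\mathfrak b_{r,k}-s$, which is the claim since $s=C_{r,k+1}$.
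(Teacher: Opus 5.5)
Your proposal is correct and follows the paper's own proof. You take the initial height $s-1$ from the cut theorem, weight each bit of the layer factor by the number of later prefixes, locate zeros at $z_j=j+\sum_{i\le j}a_i$, substitute $J(a)=\mathfrak b_{r,k}+\tfrac{n+1}{2}s$, and subtract the arithmetic-progression chord sum; the algebra does close to $(\ell^2-3\ell)/2+2\mathfrak b_{r,k}$ and to $2\mathfrak b_{r,k}-C_{r,k+1}$. The only differences are cosmetic: the paper writes each bit's weight as $2[\text{zero}]-1$, so it needs only $\sum_j z_j$ and never meets your $\sum_j a_j^2$ cancellation, and at $k=2r+2$ the factor is just the single initial zero of $0N_r$ (so $n=1$, $s=0$) with no bits of $N_r$ in that segment.
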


\begin{proof}
For \(k=2r+2\), the child width is zero and the layer factor is the initial
zero in \(0N_r\); the cyclic value \eqref{eq:cyclic-prefix-convention} makes
the same calculation below valid at this boundary.
The zero positions inside the factor are
\[
 p_j=j+\sum_{i\le j}a_i,
 \qquad 1\le j\le n.
\]
The starting value of \(-\mathscr D_r\) is \(s-1\).  Summing the effect of
each bit on all later prefixes and using
\[
 \sum_{j=1}^np_j
 =\frac{n(n+1)}2+(n+1)s-J(a)
\]
gives \eqref{eq:exact-layer-area} after inserting
\eqref{eq:centered-degree-bias}.  The discrete affine chord has endpoint
values \(s-1\) and \(n-1\); subtracting its arithmetic-progression sum from
\eqref{eq:exact-layer-area} gives \eqref{eq:raw-chord-area-error}.
\end{proof}

\subsection{Weak Gaussian convergence and the exact negative-arch area}

Define the raw normalized step profile
\begin{equation}
 \Xi_r^{\rm raw}(y)
 :=\frac{\sqrt r}{4^r}
 \mathscr D_r(\lfloor L_r y\rfloor),
 \qquad 0\le y<1,
 \label{eq:raw-profile-normalized}
\end{equation}
The polygonal profile \(\Xi_r^{\rm lay}\), including its terminal segment,
was defined in \cref{thm:negative-gaussian-skeleton}.  Full pointwise or
\(L^1\) convergence of
\(\Xi_r^{\rm raw}\) is still not known.  For the Mellin problem, however,
a weaker statement is sufficient and can now be proved.

\begin{theorem}[Weak Gaussian convergence of the negative-arch profile]
\label{thm:weak-gaussian-full-profile}
For every Lipschitz function \(\phi:[0,1]\to\mathbb C\),
\begin{equation}
 \boxed{
 \frac1{L_r}\sum_{t=0}^{L_r-1}
 \Xi_r^{\rm raw}\!\left(\frac{t}{L_r}\right)
 \phi\!\left(\frac{t}{L_r}\right)
 \longrightarrow
 \int_0^1\Xi_{\rm G}(y)\phi(y)\,dy.
 }
 \label{eq:weak-gaussian-test}
\end{equation}
For every \(M<\infty\), the convergence is uniform over all Lipschitz
functions \(\phi\) satisfying
\(\|\phi\|_\infty+\operatorname{Lip}(\phi)\le M\).
\end{theorem}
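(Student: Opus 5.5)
We compare the raw step profile with the polygonal layer skeleton \(\Xi_r^{\rm lay}\) on each layer segment \([\theta_{r,k+1},\theta_{r,k})\), and then use the convergence \(\Xi_r^{\rm lay}\to\Xi_{\rm G}\) in \(L^1[0,1]\) from \cref{thm:negative-gaussian-skeleton}. Since \(\phi\) is bounded, that \(L^1\) convergence already gives
\[
\int_0^1\Xi_r^{\rm lay}\phi\,dy\to\int_0^1\Xi_{\rm G}\phi\,dy ,
\]
uniformly over \(\|\phi\|_\infty\le M\). Replacing the Riemann sum of \(\Xi_r^{\rm lay}\phi\) at the nodes \(t/L_r\) by the integral costs \(O_M(\operatorname{osc})\) per cell. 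On a layer segment the polygon is affine with slope bounded by the exact increment ratio, which is \(O(\sqrt r/4^r)\) in normalized units. Summed over all cells this is \(o(1)\), apart from the terminal segment, whose mass is \(O(r^{5/2}/16^r)\). It therefore suffices to show
\[
\frac{\sqrt r}{4^r L_r}\sum_{k}\sum_{t=\theta_{r,k+1}}^{\theta_{r,k}-1}
\bigl(\mathscr D_r(t)+q_{r,k}(t)\bigr)\phi\!\left(\tfrac t{L_r}\right)\longrightarrow0,
\]
where \(-q_{r,k}\) is the discrete chord. This chord coincides with the polygon at the integer points, up to the convention at the endpoints, which contributes \(O(r/L_r)\cdot\max|\Xi|\).

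On each segment, write \(\phi(t/L_r)=\phi(y_k)+O(M\ell_k/L_r)\), where \(y_k=\theta_{r,k+1}/L_r\) and \(\ell_k=C_{r,k}+C_{r,k+1}\) is the segment length from \eqref{eq:layer-mesh-new}. For the constant part, \cref{lem:exact-layer-area} gives the exact area defect \(2\mathfrak b_{r,k}-C_{r,k+1}\). Its total contribution is bounded by
\[
\frac{\sqrt r}{4^rL_r}\,M\sum_k\bigl(2|\mathfrak b_{r,k}|+C_{r,k+1}\bigr).
\]
Here \(\sum_kC_{r,k+1}\le L_r\), so that part is \(O(\sqrt r/4^r)\). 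By \cref{prop:integrated-microlayer-cancellation} and \cref{lem:quadratic-layer-mass}, \(\sum_k|\mathfrak b_{r,k}|=o(Q_r)=o(16^r/\sqrt r)\). Multiplying by \(\sqrt r/(4^rL_r)\asymp\sqrt r/16^r\) gives \(o(1)\). This step is the heart of the argument: the centroid control converts the unknown within-layer ordering into an integrated error that is negligible relative to the Gaussian scale.

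For the Lipschitz part, the error on segment \(k\) is at most
\[
\frac{M\ell_k}{L_r}\sum_{t\in\text{seg}}\bigl(|\mathscr D_r(t)|+|q_{r,k}(t)|\bigr)\ll \frac{M\ell_k^2}{L_r}\max_t|\mathscr D_r(t)|.
\]
By \eqref{eq:sharp-D-amplitude}, \(\max_t|\mathscr D_r(t)|\ll 4^r/\sqrt r\), and \(\sum_k\ell_k^2\ll Q_r\asymp16^r/\sqrt r\). After normalization the total is therefore
\[
\ll M\,\frac{\sqrt r}{4^rL_r}\cdot\frac{4^r}{\sqrt r}\cdot\frac{16^r/\sqrt r}{L_r}\asymp\frac{M}{\sqrt r}\to0.
\]
All bounds depend on \(\phi\) only through \(M\), which gives the asserted uniformity.

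The main obstacle is the bookkeeping at the boundaries rather than any estimate. One must check that the discrete chord area in \eqref{eq:raw-chord-area-error} matches the Riemann sum of the polygon \(\Xi_r^{\rm lay}\) up to \(O(\ell_k)\) per layer, which is absorbed as above. One must also handle the bottom layer \(k=2r+2\), where the cyclic convention \eqref{eq:cyclic-prefix-convention} enters, and the terminal segment near \(y=1\). Both are covered by the trivial bound \(|\mathscr D_r|\ll4^r/\sqrt r\) on a set of \(O(r)\) points.
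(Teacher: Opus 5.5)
Your proposal is correct and follows essentially the same route as the paper. Both arguments compare the raw profile with the layer chords on each genuine layer interval, freeze $\phi$ and invoke the exact raw--chord defect $2\mathfrak b_{r,k}-C_{r,k+1}$ together with $\sum_k|\mathfrak b_{r,k}|=o(Q_r)$, bound the Lipschitz variation via $\sum_k\ell_k^2=O(Q_r)$ and the $O(4^r/\sqrt r)$ height bound, dispose of the terminal interval, and finally replace the polygon's Riemann sum by its integral. The only cosmetic difference is in that last step: you use the per-cell oscillation coming from the bounded chord slope, whereas the paper bounds the endpoint error of each of the $O(r)$ affine pieces.
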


\begin{proof}
Put \(R:=r+1\), \(B:=\|\phi\|_\infty\), and
\(K:=\operatorname{Lip}(\phi)\).  By
\cref{thm:negative-gaussian-skeleton},
\[
 \int_0^1\Xi_r^{\rm lay}(y)\phi(y)\,dy
 \longrightarrow
 \int_0^1\Xi_{\rm G}(y)\phi(y)\,dy.
\]
It remains to compare the raw sum with the integral on the left.

The cyclic interval corresponding to \(k=2R\) is
\([\theta_{r,2R+1},\theta_{r,2R})=[-1,0)\).  It contains only the artificial
value from \eqref{eq:cyclic-prefix-convention} and is part of neither the
raw sum nor \(\Xi_r^{\rm lay}\).  The genuine layer intervals
\[
 [\theta_{r,k+1},\theta_{r,k})
 \qquad(0\le k\le2R-1)
\]
cover \(0\le t<\theta_{r,0}\).

We first dispose of the remaining terminal interval.  By
\eqref{eq:D-layer-exact-new},
\(\mathscr D_r(\theta_{r,0})=1-C_{r,0}=-R\), and the terminal factor
\(1^{R+1}\) in \eqref{eq:whole-negative-word-factorization} gives
\[
 \mathscr D_r(\theta_{r,0}+j)=-R+j
 \qquad(0\le j\le R).
\]
Consequently
\[
 \sum_{t=\theta_{r,0}}^{L_r-1}|\mathscr D_r(t)|
 =\frac{R(R+1)}2.
\]
Since \(L_r\asymp4^r\), the normalized raw contribution of this interval is
\[
 O\!\left(B\frac{r^{5/2}}{16^r}\right)=o(1).
\]
The integral of \(|\Xi_r^{\rm lay}\phi|\) over the same interval obeys the
same bound.

On a genuine layer interval, let \(d_{r,k}\) be the affine chord of
\(\mathscr D_r\), and freeze \(\phi\) at one point of the interval.  The
signed area error is the negative of
\eqref{eq:raw-chord-area-error}.  Hence
\eqref{eq:total-layer-bias-small} gives
\[
 \sum_{k=0}^{2R-1}
 O\!\left(B\bigl(|\mathfrak b_{r,k}|+C_{r,k+1}\bigr)\right)
 =o(BQ_r),
\]
because \(\sum_kC_{r,k}=O(4^r)\), whereas
\(Q_r\asymp16^r/\sqrt r\) by
\cref{lem:quadratic-layer-mass}.

Put \(\ell_k:=C_{r,k}+C_{r,k+1}\).  The raw and chord heights are both
\(O(4^r/\sqrt r)\), by \eqref{eq:prefix-imbalance-bound}, while the
variation of \(\phi(t/L_r)\) across the \(k\)-th interval is at most
\(K\ell_k/L_r\).  Thus the total variation error before normalization is
\[
 O\!\left(
 K\frac{4^r}{\sqrt r\,L_r}\sum_k\ell_k^2
 \right)
 =O\!\left(K\frac{16^r}{r}\right),
\]
since \(\sum_k\ell_k^2=O(Q_r)\).  Multiplication by
\(\sqrt r/(4^rL_r)\asymp\sqrt r/16^r\) makes this
\(O(Kr^{-1/2})\), while the frozen-weight error becomes \(o(B)\).

Finally, the ordinary left Riemann sum of the chord profile may be replaced
by its integral.  Indeed, before multiplication by
\(\sqrt r/(4^rL_r)\), the error is
\[
 O\!\left((Br+K)\frac{4^r}{\sqrt r}\right):
\]
there are \(2R\) affine pieces, their heights are
\(O(4^r/\sqrt r)\), and the total variation of
\(\phi(\,\cdot\,/L_r)\) is at most \(K\).  After normalization this is
\(O((Br+K)/L_r)=o(1)\).  All bounds are uniform when \(B+K\le M\), which
proves \eqref{eq:weak-gaussian-test}.
\end{proof}

The constant test function yields the leading integrated asymptotic.
Put
\begin{equation}
 \mathcal A_r:=-\sum_{t=0}^{L_r-1}\mathscr D_r(t).
 \label{eq:negative-arch-area-definition}
\end{equation}

\begin{corollary}[Exact Gaussian negative-arch area]
\label{cor:exact-negative-area}
The full raw negative arch satisfies
\begin{equation}
 {
 \mathcal A_r
 \sim
 \frac{512}{9\sqrt{2\pi}}\,
 \frac{16^r}{\sqrt r}.
 }
 \label{eq:exact-negative-area-asymptotic}
\end{equation}
Equivalently, with
\(Q_r=\sum_kC_{r,k}^2\),
\begin{equation}
 {\mathcal A_r\sim2Q_r.}
 \label{eq:area-twice-Q}
\end{equation}
\end{corollary}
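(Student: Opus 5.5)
The most direct route is to sum the exact layer-area identity of \cref{lem:exact-layer-area} over all layers, rather than pass through the weak limit. Summed over \(0\le k\le 2r+2\), the layer intervals \([\theta_{r,k+1},\theta_{r,k})\) tile \([-1,\theta_{r,0})\). The cyclic bottom interval \([-1,0)\) contributes the artificial value \(\mathscr D_r(-1)=1\), which we add back. The terminal factor \(1^{R+1}\) covers \([\theta_{r,0},L_r)\) and contributes the exact amount \(R(R+1)/2\) computed in the proof of \cref{thm:weak-gaussian-full-profile}. This gives
\[
 \mathcal A_r=\sum_{k=0}^{2r+2}\Bigl(\frac{\ell_k^2-3\ell_k}{2}+2\mathfrak b_{r,k}\Bigr)+O(r^2),
 \qquad \ell_k=C_{r,k}+C_{r,k+1}.
\]

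We then estimate the pieces separately.
\begin{itemize}
 \item The linear term \(\tfrac32\sum_k\ell_k\) is \(O(\sum_kC_{r,k})=O(4^r)\). This is \(o(Q_r)\), since \(Q_r\asymp16^r/\sqrt r\) by \cref{lem:quadratic-layer-mass}.
 \item The bias term satisfies \(\sum_k|\mathfrak b_{r,k}|=o(Q_r)\) by \cref{prop:integrated-microlayer-cancellation}.
\end{itemize}
The problem therefore reduces to showing that \(\tfrac12\sum_k\ell_k^2\sim 2Q_r\). Expanding the square,
\[
 \tfrac12\sum_k\ell_k^2=\tfrac12\sum_kC_{r,k}^2+\tfrac12\sum_kC_{r,k+1}^2+\sum_kC_{r,k}C_{r,k+1}
 =Q_r+\sum_kC_{r,k}C_{r,k+1}+O(1),
\]
where the \(O(1)\) accounts for the boundary terms \(C_{r,0}^2\) and \(C_{r,2r+3}=0\). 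Note that \(C_{r,0}=R+1\), so \(C_{r,0}^2=O(r^2)\), which is negligible.

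The main step is the claim that the adjacent product \(\sum_kC_{r,k}C_{r,k+1}\) is asymptotic to \(Q_r\). I would prove it with the same Gaussian window argument used in \cref{lem:quadratic-layer-mass}.
\begin{itemize}
 \item On the central window \(|k-(r+1)|\le M\sqrt r\), \cref{thm:negative-gaussian-skeleton} gives \(C_{r,k+1}/C_{r,k}\to1\) uniformly, since a shift of one in \(k\) moves \(z\) by \(O(r^{-1/2})\) and the asymptotic \eqref{eq:C-gaussian-new} holds locally uniformly in \(z\). The window sums of \(C_{r,k}C_{r,k+1}\) and \(C_{r,k}^2\) therefore agree up to a factor \(1+o(1)\).
 \item Outside the window, Cauchy--Schwarz bounds both tail sums by the tail of \(\sum_kC_{r,k}^2\), which is an arbitrarily small fraction of \(Q_r\) once \(M\) is large.
\end{itemize}
Letting \(r\to\infty\) and then \(M\to\infty\) gives \(\sum_kC_{r,k}C_{r,k+1}\sim Q_r\). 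Hence \(\mathcal A_r\sim 2Q_r\), and \eqref{eq:C-square-asymptotic} yields the constant \(2\cdot\frac{256}{9\sqrt{2\pi}}=\frac{512}{9\sqrt{2\pi}}\).

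As a cross-check, \cref{thm:weak-gaussian-full-profile} with \(\phi\equiv1\) gives \(\mathcal A_r\sim-\frac{L_r4^r}{\sqrt r}\int_0^1\Xi_{\rm G}\). Substituting \(y=\Phi_{\rm N}(z)\) turns the integral into \(\frac{1}{\sqrt{2\pi}}\int e^{-z^2}\,dz=\frac{1}{\sqrt2}\). With \(L_r\sim\frac{32}{3}4^r\) this gives \(\frac{32}{3}\cdot\frac{16}{3\sqrt\pi}\cdot\frac1{\sqrt2}\cdot\frac{16^r}{\sqrt r}=\frac{512}{9\sqrt{2\pi}}\frac{16^r}{\sqrt r}\), which agrees with the layer computation.

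The main obstacle is controlling the Gaussian tails uniformly in the adjacent-product comparison. Cauchy--Schwarz reduces this to the tail bound already used in \cref{lem:quadratic-layer-mass}.
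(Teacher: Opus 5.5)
Your proof is correct, but it takes a different route from the paper's proof of this corollary.

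\textbf{The paper's route.} The paper sets $\phi\equiv1$ in \cref{thm:weak-gaussian-full-profile} and evaluates $-\int_0^1\Xi_{\rm G}=16/(3\sqrt{2\pi})$ by the substitution $y=\Phi_{\rm N}(z)$, which is exactly your cross-check. It obtains $\mathcal A_r\sim2Q_r$ only afterwards, by comparing that constant with \cref{lem:quadratic-layer-mass}.

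\textbf{Your route.} You sum \cref{lem:exact-layer-area} over all layers and add the cyclic and terminal corrections. This is in effect the finite identity $\mathcal A_r=Q_r+P_r+2\mathfrak B_r-3M_r+R+2$, which the paper proves only later as \cref{prop:exact-total-area-decomposition}. After that you need only \cref{prop:integrated-microlayer-cancellation} for the bias and a window argument for $P_r\sim Q_r$. That window argument is sound: local uniformity of \eqref{eq:C-gaussian-new} gives $C_{r,k+1}/C_{r,k}\to1$ uniformly on the central window, and Cauchy--Schwarz reduces the off-window part to the tail of $Q_r$.

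\textbf{What each route buys.} In your route the factor $2$ is structural: you compute the chord (trapezoid) areas exactly as $\tfrac12\sum_k\ell_k^2=Q_r+P_r+O(r^2)$ rather than taking their limit through the Gaussian profile. You therefore bypass the cut asymptotics \eqref{eq:theta-gaussian-new}, the $L^1$ convergence of $\Xi_r^{\rm lay}$, and the Riemann-sum comparisons inside the weak-convergence proof. Your argument is also precisely the leading-order form of \cref{thm:first-edgeworth-negative-area}, where the Vandermonde expansion $P_r/Q_r=1-1/(2R)+O(R^{-2})$ of \cref{lem:QP-corner-asymptotics} sharpens your soft $P_r\sim Q_r$. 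The paper's route makes the corollary a one-line consequence once the weak theorem is available. It also exhibits the area as the integral of the Gaussian limit shape, one instance of a statement valid for all Lipschitz tests, whereas your decomposition is tied to the constant test. Both routes rest on the same microlayer-cancellation input.

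One small slip: the boundary correction in $\tfrac12\sum_k\ell_k^2$ is $-C_{r,0}^2/2=O(r^2)$, not $O(1)$. As you note yourself, it is negligible either way.
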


\begin{proof}
Take \(\phi\equiv1\) in \cref{thm:weak-gaussian-full-profile}.  Since
\[
 L_r\sim\frac{32}{3}4^r
\]
and, after the substitution \(y=\Phi_{\rm N}(z)\),
\[
 -\int_0^1\Xi_{\rm G}(y)\,dy
 =\frac{16}{3\sqrt{2\pi}},
\]
we obtain \eqref{eq:exact-negative-area-asymptotic}.  Combining it with
\eqref{eq:C-square-asymptotic} gives \eqref{eq:area-twice-Q}.
\end{proof}

For reference, exact degree-word dynamic programming gives the following
high-rank values without constructing the exponentially long binary words:
\begin{center}
\begin{tabular}{@{}r r r@{}}
\toprule
$r$&$\mathcal A_r/Q_r$&$\sqrt r\,\mathcal A_r/16^r$\\
\midrule
10&2.005255&21.052449\\
20&2.002702&21.826865\\
30&2.001818&22.105004\\
40&2.001370&22.248196\\
50&2.001099&22.335482\\
60&2.000917&22.394257\\
\bottomrule
\end{tabular}
\end{center}
The limiting constant in the last column is
\(512/(9\sqrt{2\pi})=22.695383\ldots\).  These computations are checks of
the theorem, not inputs to its proof.

\subsection{The first integrated Edgeworth correction}
\label{subsec:first-integrated-edgeworth}

The preceding theorem determines the leading negative-arch mass.  The next
coefficient can also be obtained rigorously, without assuming pointwise
convergence of the raw profile.  This is the first place where the
second-order polylogarithmic hierarchy can be seen unconditionally.

Put
\[
 R:=r+1,
 \qquad
 \mathscr C_R:=\binom{4R}{2R},
\]
and abbreviate
\begin{equation}
 Q_r:=\sum_{k=0}^{2R} C_{r,k}^2,
 \qquad
 P_r:=\sum_{k=0}^{2R-1}C_{r,k}C_{r,k+1},
 \qquad
 \mathfrak B_r:=\sum_{k=0}^{2R}\mathfrak b_{r,k}.
 \label{eq:QPB-def}
\end{equation}
The first two quantities depend only on the companion-layer widths, whereas
\(\mathfrak B_r\) retains the integrated order information inside the
layers.

\begin{lemma}[Corner asymptotics for the quadratic layer sums]
\label{lem:QP-corner-asymptotics}
As \(R\to\infty\),
\begin{align}
 Q_r
 &=\mathscr C_R
 \left(
 \frac{16}{9}-\frac{40}{81R}+O(R^{-2})
 \right),
 \label{eq:Q-corner-expansion}\\
 P_r
 &=\mathscr C_R
 \left(
 \frac{16}{9}-\frac{112}{81R}+O(R^{-2})
 \right).
 \label{eq:P-corner-expansion}
\end{align}
Consequently,
\begin{equation}
 \frac{P_r}{Q_r}
 =1-\frac{1}{2R}+O(R^{-2}).
 \label{eq:P-over-Q-expansion}
\end{equation}
\end{lemma}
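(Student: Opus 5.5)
We expand $Q_r$ and $P_r$ directly from the definition $C_{r,k}=\sum_{i=0}^{R}\binom{2i}{k}$, without any Gaussian approximation. Squaring gives
\[
 Q_r=\sum_{0\le i,j\le R}\sum_k\binom{2i}{k}\binom{2j}{k}
 =\sum_{0\le i,j\le R}\binom{2i+2j}{2i},
\]
by Vandermonde's identity in the form $\sum_k\binom{a}{k}\binom{b}{b-k}=\binom{a+b}{b}$. In the same way,
\[
 P_r=\sum_{i,j}\sum_k\binom{2i}{k}\binom{2j}{k+1}
 =\sum_{i,j}\binom{2i+2j}{2i+1}.
\]
Both are therefore finite double sums of binomial coefficients over the square $[0,R]^2$. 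The dominant term in each is the corner $i=j=R$, which gives $\mathscr C_R=\binom{4R}{2R}$ for $Q_r$ and $\binom{4R}{2R+1}=\mathscr C_R\cdot\frac{2R}{2R+1}$ for $P_r$. This explains the normalization by $\mathscr C_R$ and the name ``corner asymptotics.''

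Next we write $i=R-p$ and $j=R-q$ with $p,q\ge0$ and expand the ratio $\binom{4R-2p-2q}{2R-2p}/\binom{4R}{2R}$ for fixed $p,q$ to order $R^{-1}$. Removing the $2p+2q$ top factors from the numerator and the bottom factors from the denominator, these ratios equal $4^{-(p+q)}$ times $1+\alpha(p,q)/R+O((p+q)^{4}R^{-2})$, where $\alpha$ is an explicit polynomial. For example, the central-type asymmetry contributes a term $-(p-q)^2/(2R)$-type from the Gaussian curvature, together with terms linear in $p+q$ from the ratio $\binom{4R-2m}{2R-m}/\binom{4R}{2R}$. The analogous expansion with $2i+1$ in the lower index gives $P_r$.

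Summing over $p,q\ge0$ against the weight $4^{-(p+q)}$ produces $(\sum_p4^{-p})^2=16/9$ as the leading constant. The $R^{-1}$ coefficient then reduces to first and second moments of geometric sums, namely $\sum_p p\,4^{-p}=4/9$ and $\sum_p p^2 4^{-p}=20/27$, and these should assemble to $-40/81$ and $-112/81$. We would double-check these constants numerically against the recurrence values before trusting the algebra.

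For the error terms we need uniformity. We use the crude bound $\binom{4R-2m}{\cdot}\le 4^{-m}\cdot O(\mathscr C_R)$, together with a Taylor remainder valid for $p+q\le R^{1/3}$, say. The tail $p+q>R^{1/3}$ is exponentially small. Inside the window, the remainder weights $(p+q)^4 4^{-(p+q)}$ are summable, which gives $O(R^{-2})$.

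Finally, \eqref{eq:P-over-Q-expansion} follows by division:
\[
 \frac{16/9-112/(81R)}{16/9-40/(81R)}=1-\frac{72}{81R}\cdot\frac{9}{16}+O(R^{-2})=1-\frac{1}{2R}+O(R^{-2}).
\]
We expect the main obstacle to be the bookkeeping of the first-order expansion of the binomial ratios, in particular obtaining the correct polynomial $\alpha(p,q)$ for both lower indices $2i$ and $2i+1$ so that the exact constants $-40/81$ and $-112/81$ emerge. A consistency check is that the difference $Q_r-P_r=\mathscr C_R\,(72/81)R^{-1}+\cdots$ is governed mainly by the corner factor $1-\frac{2R}{2R+1}\approx\frac1{2R}$ multiplied by $16/9$, which equals $8/(9R)=72/(81R)$.
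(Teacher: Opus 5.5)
Your route is the paper's route. You use Vandermonde to get $\sum_{0\le i,j\le R}\binom{2i+2j}{2i}$ and $\sum_{0\le i,j\le R}\binom{2i+2j}{2i+1}$, substitute $i=R-p$, $j=R-q$, expand to first order on $p+q\le R^{1/3}$, sum geometric moments, and divide. The gap is that the proposal stops exactly where the lemma's content begins. You never determine $\alpha(p,q)$ or its analogue for $P_r$, and ``these should assemble to $-40/81$ and $-112/81$'' backed by a planned numerical check is not a derivation.

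Your own heuristic decomposition finishes the job once it is made exact. Put $s=p+q$ and $d=p-q$, so that $2R-2p=2R-s-d$. Then
\[
\frac{\binom{4R-2s}{2R-s}}{\mathscr C_R}=4^{-s}\Bigl(1+\frac{s}{4R}+\cdots\Bigr),
\qquad
\frac{\binom{4R-2s}{2R-s-d}}{\binom{4R-2s}{2R-s}}=1-\frac{d^2}{2R}+\cdots,
\]
with errors $O((1+s)^4R^{-2})$ on the window. Hence $\alpha(p,q)=-\frac{(p-q)^2}{2}+\frac{p+q}{4}$. With $\mu_j=\sum_{n\ge0}n^j4^{-n}$, so that $\mu_0=\frac43$, $\mu_1=\frac49$, $\mu_2=\frac{20}{27}$,
\[
\sum_{p,q\ge0}4^{-p-q}\alpha(p,q)=-\mu_0\mu_2+\mu_1^2+\tfrac12\mu_0\mu_1
=-\tfrac{80}{81}+\tfrac{16}{81}+\tfrac{24}{81}=-\tfrac{40}{81}.
\]
For $P_r$, the exact factor
\[
\frac{\binom{4R-2s}{2R-2p+1}}{\binom{4R-2s}{2R-2p}}=\frac{2R-2q}{2R-2p+1}=1+\frac{(p-q)-1/2}{R}+O\bigl((1+s)^2R^{-2}\bigr)
\]
gives $\beta(p,q)=\alpha(p,q)+(p-q)-\frac12$. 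The antisymmetric term $p-q$ sums to zero against the symmetric weight $4^{-p-q}$, and $-\frac12\cdot\frac{16}{9}=-\frac{72}{81}$, which yields $-\frac{112}{81}$.

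The same observation turns your closing ``consistency check'' into an actual proof of the ratio statement: $P_r=Q_r-\frac{8}{9R}\mathscr C_R+O(\mathscr C_RR^{-2})$. Together with $Q_r\sim\frac{16}{9}\mathscr C_R$, this gives $P_r/Q_r=1-\frac1{2R}+O(R^{-2})$ without needing $\alpha$ at all.

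One smaller correction concerns the tail. The bound $\binom{4R-2m}{\cdot}\le4^{-m}\,O(\mathscr C_R)$ is false uniformly in $m$. At $p=q=R$ the term is $\binom00=1$, while $4^{-2R}\mathscr C_R\asymp R^{-1/2}$. Use instead $\binom{4R-2m}{\cdot}\le2^{4R-2m}$ and $\mathscr C_R\gg16^R/\sqrt R$, which give $\ll\sqrt R\,4^{-m}\mathscr C_R$, as the paper does. The region $p+q>R^{1/3}$ then contributes, relative to $\mathscr C_R$, at most $\sqrt R\sum_{m>R^{1/3}}(m+1)4^{-m}$. This is smaller than any power of $R$, though not exponentially small in $R$. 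With these two points filled in, your argument is the paper's proof.
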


\begin{proof}
Vandermonde's identity gives
\begin{align*}
 Q_r
 &=\sum_{0\le i,j\le R}
 \binom{2i+2j}{2i},\\
 P_r
 &=\sum_{0\le i,j\le R}
 \binom{2i+2j}{2i+1}.
\end{align*}
Both sums are exponentially concentrated at the corner \((i,j)=(R,R)\).
Write \(i=R-p\), \(j=R-q\).  Uniformly for
\(p+q=o(R^{1/2})\), elementary factorial-ratio expansion gives
\begin{equation}
 \frac{\binom{4R-2p-2q}{2R-2p}}{\mathscr C_R}
 =4^{-p-q}
 \left(1+\frac{\alpha_{p,q}}{R}
 +O\!\left(\frac{(1+p+q)^4}{R^2}\right)\right),
 \label{eq:Q-corner-local}
\end{equation}
where
\[
 \alpha_{p,q}
 =-\frac{p^2+q^2}{2}+pq+\frac{p+q}{4}.
\]
Similarly,
\begin{equation}
 \frac{\binom{4R-2p-2q}{2R-2p+1}}{\mathscr C_R}
 =4^{-p-q}
 \left(1+\frac{\beta_{p,q}}{R}
 +O\!\left(\frac{(1+p+q)^4}{R^2}\right)\right),
 \label{eq:P-corner-local}
\end{equation}
with
\[
 \beta_{p,q}
 =-\frac{p^2+q^2}{2}+pq+\frac{5p-3q}{4}-\frac12.
\]
The complement of, say, \(p+q\le R^{1/3}\) is exponentially small relative
to \(\mathscr C_R\), so the expansions can be summed termwise.  The needed
geometric sums are
\begin{align*}
 \sum_{p,q\ge0}4^{-p-q}&=\frac{16}{9},\\
 \sum_{p,q\ge0}4^{-p-q}\alpha_{p,q}&=-\frac{40}{81},\\
 \sum_{p,q\ge0}4^{-p-q}\beta_{p,q}&=-\frac{112}{81}.
\end{align*}
This proves \eqref{eq:Q-corner-expansion}--\eqref{eq:P-corner-expansion}, and
the quotient gives \eqref{eq:P-over-Q-expansion}.
\end{proof}

The remaining contribution comes from the centered degree moments.  For a
single root put
\begin{equation}
 \beta_L:=\sum_{k=0}^{L}b_{L,k}.
 \label{eq:beta-single-total}
\end{equation}
Summing the exact recursion in
\cref{lem:single-root-centroid-recursion} over the depth gives a closed
one-dimensional recurrence.

\begin{lemma}[Summed single-root centroid]
\label{lem:summed-single-root-centroid}
For \(L\ge1\),
\begin{equation}
 \beta_L=\sum_{d=0}^{L-1}\beta_d+e_L,
 \label{eq:beta-summed-recurrence}
\end{equation}
where
\begin{equation}
 e_L
 =\frac12\sum_{0\le a<b<L}
 (-1)^a(b-a)
 \frac{(a+b+1)!}{(a+1)!(b+1)!}.
 \label{eq:eL-exact}
\end{equation}
Moreover,
\begin{align}
 e_L
 &=(-1)^L\frac{2}{45L}\binom{2L}{L}
 \left(1+O(L^{-1})\right),
 \label{eq:eL-asymptotic}\\
 {
 \beta_L
 =(-1)^L\frac{1}{27L}\binom{2L}{L}
 \left(1+O(L^{-1})\right).}
 \label{eq:betaL-asymptotic}
\end{align}
\end{lemma}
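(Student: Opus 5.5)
The plan is to sum the recursion of \cref{lem:single-root-centroid-recursion} over \(k\), identify the forcing term in closed form, and then obtain the asymptotics of \(e_L\) and \(\beta_L\) separately.

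\emph{The recurrence \eqref{eq:beta-summed-recurrence}.} Sum \eqref{eq:single-root-bias-recursion} over \(1\le k\le L\). The term \(b_{L,0}\) vanishes, since \(W_{L,0}=(L)\) has \(|a|=1\) and therefore \(\mathfrak b=L-L=0\). Also \(b_{d,k-1}=0\) for \(k-1>d\). Exchanging the order of summation then gives \(\sum_{d<L}\beta_d\) plus \(\sum_{k\ge1}\varepsilon_{L,k}\). To evaluate \(\sum_{k\ge1}\frac1{2k}\binom a{k-1}\binom b{k-1}\) in closed form, I will write \(\frac1k\binom{b}{k-1}=\frac{1}{b+1}\binom{b+1}{k}\). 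Vandermonde then gives
\[
 \sum_k\binom a{k-1}\binom{b+1}{k}=\binom{a+b+1}{a+1}.
\]
Dividing by \(2(b+1)\) yields \(\frac{(a+b+1)!}{2(a+1)!\,(b+1)!}\), which is exactly \eqref{eq:eL-exact}.

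\emph{Asymptotics of \(e_L\).} Put \(f(a,b)=(b-a)\frac{(a+b+1)!}{(a+1)!(b+1)!}\). The increment is
\[
 e_{L+1}-e_L=\frac12\sum_{a<L}(-1)^a f(a,L).
\]
The terms of this alternating sum are concentrated near \(a\approx L\) with geometric ratio close to \(1\). I therefore plan to pair consecutive \(a\), or equivalently use the generating function \(\sum_a(-1)^a\binom{a+L+1}{a+1}x^{a}\) at \(x=-1\) via Euler transformation. Writing \(a=L-j\), each term is \(\binom{2L}{L}\) times \(2^{-?}\) corrections, and the alternating sum over \(j\) of \(j\cdot 4^{0}\cdot\rho^j\)-type expansions gives an increment of size \((-1)^L c\binom{2L}{L}/L\). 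Because consecutive increments alternate in sign and grow by a factor \(\approx4\), \(e_L\) is dominated by its last increments. Writing \(e_L=\sum_{M<L}\Delta_M\) and summing a geometric series with ratio \(-4\) gives \(e_L\approx\frac{4}{5}\cdot\) (the last increment). Matching constants should produce \(2/45\). Rather than tracking the alternating sum by hand, the more reliable route is an exact closed form: I expect \(\sum_{a<L}(-1)^a f(a,L)\) to telescope to a single binomial \(\pm\binom{2L}{L}\cdot\)(rational in \(L\)), which can be found by Gosper/Zeilberger-style summation or by checking a guessed antidifference. \textbf{This step is the main obstacle}: getting the exact constant \(2/45\) requires either that closed-form telescoping or a careful second-order Laplace expansion of the alternating sum, since the leading orders cancel.

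\emph{Asymptotics of \(\beta_L\).} Differencing \eqref{eq:beta-summed-recurrence} gives
\[
 \beta_{L+1}-\beta_L=\beta_L+e_{L+1}-e_L,\qquad\text{so}\qquad \beta_{L+1}=2\beta_L+(e_{L+1}-e_L).
\]
Solving, \(\beta_L=2^{L-1}\beta_1+\sum_{M} 2^{L-1-M}(e_{M+1}-e_M)\). Since the forcing grows like \(4^M\) with alternating sign, the sum is dominated by its top terms, in a geometric series with ratio \(-1/2\) relative to the last term. The homogeneous \(2^L\) part is negligible against \(4^L/L^{3/2}\). With \(\Delta e_{L}\sim(-1)^{L}\tfrac{2}{45}\cdot 5\binom{2L}{L}/L\) (the forcing at ratio \(-4\)) and summation factor \(\frac1{1+1/2}=\frac23\)... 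I will carry out this bookkeeping with the \(1+O(L^{-1})\) relative errors. The fixed-ratio geometric sums preserve that precision, and the constants combine to give \(\frac{2}{45}\cdot\frac{5}{6}=\frac1{27}\), which is \eqref{eq:betaL-asymptotic}.
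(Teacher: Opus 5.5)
\textbf{Gap: the asymptotic \eqref{eq:eL-asymptotic} is not actually proved.} Your derivation of \eqref{eq:beta-summed-recurrence}--\eqref{eq:eL-exact} via \(\frac1k\binom{b}{k-1}=\frac1{b+1}\binom{b+1}{k}\) and Vandermonde is correct. Your treatment of \(\beta_L\) is also correct. Differencing to \(\beta_{L+1}=2\beta_L+(e_{L+1}-e_L)\) and summing the ratio \(-\tfrac12\) series gives \(\beta_L\sim\tfrac23(e_L-e_{L-1})\sim\tfrac56e_L\). This is the paper's renewal sum \(\beta_L=e_L+\sum_{j\ge1}2^{j-1}e_{L-j}\) after summation by parts.

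The problem is \eqref{eq:eL-asymptotic}, which is where both constants \(2/45\) and \(1/27\) come from. You never evaluate the constant; you only say matching ``should'' give \(2/45\). You then defer it to a closed-form telescoping or a second-order expansion, on the grounds that the leading orders cancel. That premise is false, and so is your claim that the terms of the increment have geometric ratio close to \(1\).

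Put \(\Delta_L:=e_{L+1}-e_L\) and \(a=L-j\). Lowering \(a\) by one multiplies \(\frac{(a+L+1)!}{(a+1)!(L+1)!}\) by \(\frac{a+1}{a+L+1}\le\frac12\). So the terms are majorized by \(j\,2^{1-j}\binom{2L}{L}/(L+1)\), and
\[
 \Delta_L
 =(-1)^L\frac{\binom{2L}{L}}{L}
 \Bigl(\sum_{j\ge1}j\bigl(-\tfrac12\bigr)^j+O(L^{-1})\Bigr)
 =(-1)^{L+1}\frac29\,\frac{\binom{2L}{L}}{L}\bigl(1+O(L^{-1})\bigr).
\]
The leading sum \(-2/9\) is nonzero, so nothing cancels. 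This also corrects the sign in your \(\beta\) paragraph: \(\Delta_L\) carries \((-1)^{L+1}\), not \((-1)^L\).

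\textbf{How to close it.} Sum the increments. Their successive ratio is \(-\tfrac14(1+O(L^{-1}))\), and \(|\Delta_M|\le 2\binom{2M}{M}/(M+1)\) gives a uniform geometric majorant for small \(M\). Hence \(e_L=\sum_{M<L}\Delta_M=\tfrac45\Delta_{L-1}(1+O(L^{-1}))\), and \(\tfrac45\cdot\tfrac14\cdot\tfrac29=\tfrac2{45}\).

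This is the paper's argument with different bookkeeping. The paper substitutes \(a=L-1-p\), \(b=L-1-q\) directly into \eqref{eq:eL-exact} and evaluates \(\sum_{p>q\ge0}(-1)^p(p-q)2^{-p-q-2}=-\tfrac2{45}\). Your row-by-row increments are that double sum factored as \(\tfrac14\cdot\tfrac45\cdot(-\tfrac29)\): the sum over rows \(q\) gives \(\tfrac45\), and the sum over \(d=p-q\) gives \(-\tfrac29\). With this leading-order evaluation inserted, your proof is complete. Without it, the lemma's constants are not established.
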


\begin{proof}
Summing \eqref{eq:parity-forcing-centroid} over \(k\) uses the exact identity
\[
 \sum_{k\ge1}\frac1k
 \binom{a}{k-1}\binom{b}{k-1}
 =\frac{(a+b+1)!}{(a+1)!(b+1)!},
\]
which yields \eqref{eq:beta-summed-recurrence} and
\eqref{eq:eL-exact}.

For \eqref{eq:eL-asymptotic}, set
\(a=L-1-p\), \(b=L-1-q\), so that \(p>q\ge0\).  Relative to the central
binomial coefficient,
\[
 \frac{(a+b+1)!}{(a+1)!(b+1)!}
 =\binom{2L}{L}
 \frac{2^{-p-q-1}}{L}
 \left(1+O\!\left(\frac{(1+p+q)^2}{L}\right)\right)
\]
on every fixed corner window, while the remaining terms have a uniformly
geometric tail.  Hence
\[
 \frac{Le_L}{(-1)^{L-1}\binom{2L}{L}}
 \longrightarrow
 \sum_{p>q\ge0}(-1)^p(p-q)2^{-p-q-2}
 =-\frac{2}{45},
\]
which proves \eqref{eq:eL-asymptotic} with the stated error.

Let \(S_L=\sum_{d\le L}\beta_d\).  Then
\(S_L=2S_{L-1}+e_L\), and therefore
\[
 \beta_L
 =e_L+\sum_{j=1}^{L-1}2^{j-1}e_{L-j}.
\]
For fixed \(j\),
\[
 \frac{e_{L-j}}{e_L}
 =(-1)^j4^{-j}\left(1+O(j/L)\right),
\]
and the tail is again geometric.  Thus
\[
 \frac{\beta_L}{e_L}
 \longrightarrow
 1+\sum_{j\ge1}2^{j-1}(-1)^j4^{-j}
 =1-\frac16=\frac56.
\]
Combining this with \eqref{eq:eL-asymptotic} gives
\eqref{eq:betaL-asymptotic}.
\end{proof}

The forest bias has two parts.  Summing
\eqref{eq:forest-bias-decomposition} over \(k\) gives exactly
\begin{equation}
 \mathfrak B_r
 =\mathfrak B_r^{\rm int}+\mathfrak B_r^{\rm cross},
 \label{eq:B-split}
\end{equation}
where
\begin{align}
 \mathfrak B_r^{\rm int}
 &=\sum_{i=0}^{R}\beta_{2i},
 \label{eq:B-int-exact}\\
 \mathfrak B_r^{\rm cross}
 &=\sum_{0\le i<j\le R}(j-i)
 \frac{(2i+2j+1)!}{(2i+1)!(2j+1)!}.
 \label{eq:B-cross-exact}
\end{align}

\begin{proposition}[First-order forest centroid constant]
\label{prop:forest-centroid-first-order}
As \(R\to\infty\),
\begin{align}
 \mathfrak B_r^{\rm int}
 &=\frac{8}{405R}\mathscr C_R
 +O\!\left(\frac{\mathscr C_R}{R^2}\right),
 \label{eq:B-int-asymptotic}\\
 \mathfrak B_r^{\rm cross}
 &=\frac{64}{135R}\mathscr C_R
 +O\!\left(\frac{\mathscr C_R}{R^2}\right),
 \label{eq:B-cross-asymptotic}\\
 {
 \mathfrak B_r
 =\frac{40}{81R}\mathscr C_R
 +O\!\left(\frac{\mathscr C_R}{R^2}\right).}
 \label{eq:B-total-asymptotic}
\end{align}
Equivalently,
\begin{equation}
 \frac{\mathfrak B_r}{Q_r}
 =\frac{5}{18R}+O(R^{-2}).
 \label{eq:B-over-Q-asymptotic}
\end{equation}
\end{proposition}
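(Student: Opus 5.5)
The plan is to treat the two pieces of \eqref{eq:B-split} separately by the same corner method used in \cref{lem:QP-corner-asymptotics}. Near the corner, every summand is a fixed geometric weight times \(\mathscr C_R/R\), up to a relative error \(O((1+p+q)^c/R)\). The geometric weights make all these corrections summable, so they only contribute \(O(\mathscr C_R/R^2)\). Far from the corner (say \(p+q>R^{1/3}\)), the terms are exponentially small relative to \(\mathscr C_R\) by the standard central-binomial ratio bounds.

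For \(\mathfrak B_r^{\rm int}\), I would insert \eqref{eq:betaL-asymptotic} with \(L=2i\), where the sign \((-1)^{2i}=+1\) makes all terms positive:
\[
 \beta_{2i}=\frac{1}{54i}\binom{4i}{2i}\bigl(1+O(i^{-1})\bigr).
\]
Writing \(i=R-p\), the ratios satisfy \(\binom{4R-4p}{2R-2p}/\mathscr C_R=16^{-p}(1+O(p^2/R))\) and \(R/i=1+O(p/R)\). Small \(i\) are negligible since \(\binom{4i}{2i}\ll 16^{i}\). Summing therefore gives
\[
 \frac{\mathscr C_R}{54R}\sum_{p\ge0}16^{-p}=\frac{16}{15}\cdot\frac{1}{54R}\,\mathscr C_R=\frac{8}{405R}\,\mathscr C_R,
\]
with remainder \(O(\mathscr C_R/R^2)\). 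This proves \eqref{eq:B-int-asymptotic}.

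For \(\mathfrak B_r^{\rm cross}\), I would first rewrite the summand as
\[
 \frac{(2i+2j+1)!}{(2i+1)!(2j+1)!}=\frac{1}{2i+2j+2}\binom{2i+2j+2}{2i+1}.
\]
Then I set \(i=R-p\), \(j=R-q\) with \(p>q\ge0\), so that \(j-i=p-q\). A factorial-ratio expansion of the same type as \eqref{eq:Q-corner-local} gives
\[
 \binom{4R-2p-2q+2}{2R-2p+1}=4^{1-p-q}\mathscr C_R\bigl(1+O((1+p+q)^2/R)\bigr),
\]
and the prefactor is \(1/(4R)\,(1+O((p+q)/R))\). The leading term is therefore \((\mathscr C_R/R)\sum_{p>q\ge0}(p-q)4^{-p-q}\). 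Substituting \(p=q+d\) with \(d\ge1\) splits this into \(\sum_q16^{-q}\sum_d d4^{-d}=\tfrac{16}{15}\cdot\tfrac49=\tfrac{64}{135}\), which gives \eqref{eq:B-cross-asymptotic}.

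Adding the two pieces, \(\tfrac{8}{405}+\tfrac{192}{405}=\tfrac{40}{81}\), which is \eqref{eq:B-total-asymptotic}. Dividing by \eqref{eq:Q-corner-expansion}, where \(Q_r=\tfrac{16}{9}\mathscr C_R(1+O(R^{-1}))\), yields \(\tfrac{40}{81}\cdot\tfrac{9}{16}=\tfrac{5}{18}\) and hence \eqref{eq:B-over-Q-asymptotic}.

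The main obstacle is the error bookkeeping in the internal term. The relative error \(1+O(L^{-1})\) in \eqref{eq:betaL-asymptotic} must hold uniformly, so that after weighting by \(16^{-p}\) it contributes only \(O(\mathscr C_R/R^2)\). In particular, the \(O(i^{-1})\) terms for moderately small \(i\) must not accumulate. Since \(\beta_{2i}\ll\binom{4i}{2i}/i\) for all \(i\), the sum over \(i\le R/2\) is \(O(4^{R}\mathscr C_R\,16^{-R})\), which is negligible, so only \(i\asymp R\) matters, and there the error is uniformly \(O(R^{-1})\).
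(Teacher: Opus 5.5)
Your proposal is correct and follows the paper's proof essentially step for step. You use the same split into $\mathfrak B_r^{\rm int}$ and $\mathfrak B_r^{\rm cross}$, the same summed single-root asymptotic $\beta_{2i}\sim\binom{4i}{2i}/(54i)$ with geometric ratio $1/16$ giving $8/405$, the same corner substitution $i=R-p$, $j=R-q$ giving $\sum_{p>q\ge0}(p-q)4^{-p-q}=64/135$, and the same final division by the corner expansion of $Q_r$. Your rewriting of the cross summand as $\frac{1}{2i+2j+2}\binom{2i+2j+2}{2i+1}$ and your explicit tail bounds (for $i\le R/2$ and for $p+q>R^{1/3}$) just spell out the factorial-ratio and geometric-tail steps that the paper asserts without detail.
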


\begin{proof}
Since only even roots occur in \eqref{eq:B-int-exact},
\cref{lem:summed-single-root-centroid} gives
\[
 \beta_{2i}
 =\frac{1}{54i}\binom{4i}{2i}
 \left(1+O(i^{-1})\right).
\]
The final roots dominate geometrically with ratio \(1/16\), hence
\[
 \mathfrak B_r^{\rm int}
 =\frac{\mathscr C_R}{54R}
 \left(\sum_{j\ge0}16^{-j}+O(R^{-1})\right)
 =\frac{8}{405R}\mathscr C_R
 +O(\mathscr C_RR^{-2}).
\]

For the cross term, write \(i=R-p\), \(j=R-q\), with \(p>q\ge0\).  A
factorial-ratio expansion gives
\[
 \frac{(2i+2j+1)!}{(2i+1)!(2j+1)!}
 =\frac{\mathscr C_R}{R}
 4^{-p-q}
 \left(1+O\!\left(\frac{(1+p+q)^2}{R}\right)\right),
\]
again with a geometric tail.  Therefore
\[
 \frac{R\mathfrak B_r^{\rm cross}}{\mathscr C_R}
 \longrightarrow
 \sum_{p>q\ge0}(p-q)4^{-p-q}
 =\frac{64}{135}.
\]
Adding the two pieces gives \eqref{eq:B-total-asymptotic}; division by
\eqref{eq:Q-corner-expansion} gives \eqref{eq:B-over-Q-asymptotic}.
\end{proof}

\begin{proposition}[Exact total-area decomposition]
\label{prop:exact-total-area-decomposition}
With \(R=r+1\) and the notation in \eqref{eq:QPB-def}, put
\begin{equation}
 M_r:=\sum_{k=0}^{2R}C_{r,k}
 =\sum_{i=0}^{R}4^i
 =\frac{4^{R+1}-1}{3}.
 \label{eq:Mr-layer-mass}
\end{equation}
Then the complete raw negative-arch area satisfies the finite identity
\begin{equation}
 {
 \mathcal A_r
 =Q_r+P_r+2\mathfrak B_r-3M_r+R+2.}
 \label{eq:A-QPB-exact}
\end{equation}
\end{proposition}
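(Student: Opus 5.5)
The identity should follow by summing the exact layer-area identity of \cref{lem:exact-layer-area} over all layers and then adding the terminal run of ones by hand. First I will pin down how the layer intervals tile the index range. By \cref{thm:negative-layer-cuts}, the intervals \([\theta_{r,k+1},\theta_{r,k})\) for \(0\le k\le 2R\) are consecutive. Since \(\theta_{r,2R+1}=-1\), their union is \([-1,\theta_{r,0})\). The terminal factor \(1^{R+1}\) in \eqref{eq:whole-negative-word-factorization} then occupies \(t=\theta_{r,0},\dots,\theta_{r,0}+R=L_r-1\).

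\cref{lem:exact-layer-area} is stated for \(0\le k\le 2r+2=2R\), including the cyclic bottom layer. Summing it over these \(k\) gives
\[
 -\sum_{t=-1}^{\theta_{r,0}-1}\mathscr D_r(t)
 =\sum_{k=0}^{2R}\frac{\ell_k^2-3\ell_k}{2}+2\mathfrak B_r,
 \qquad \ell_k=C_{r,k}+C_{r,k+1}.
\]
The artificial term \(t=-1\) contributes \(-\mathscr D_r(-1)=-1\) by \eqref{eq:cyclic-prefix-convention}. Removing it, the genuine sum over \(0\le t<\theta_{r,0}\) equals the right-hand side plus \(1\).

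The terminal segment is handled as in the proof of \cref{thm:weak-gaussian-full-profile}. There \(\mathscr D_r(\theta_{r,0}+j)=-R+j\) for \(0\le j\le R\), so
\[
 -\sum_{t=\theta_{r,0}}^{L_r-1}\mathscr D_r(t)=\frac{R(R+1)}{2}.
\]

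What remains is to express the quadratic layer sum through \(Q_r\), \(P_r\) and \(M_r\), with \(C_{r,2R+1}=0\) and \(C_{r,0}=R+1\) (only \(\binom{2i}{0}\) contributes). The squares give
\[
 \sum_k\ell_k^2=2Q_r-C_{r,0}^2+2P_r,
\]
since the shifted square sum \(\sum_k C_{r,k+1}^2\) misses only \(C_{r,0}^2\). Similarly \(\sum_k\ell_k=2M_r-C_{r,0}\), which uses \eqref{eq:Mr-layer-mass}; there \(\sum_k C_{r,k}=\sum_i 2^{2i}=\sum_i 4^i\). Hence
\[
 \sum_k\frac{\ell_k^2-3\ell_k}{2}
 =Q_r+P_r-3M_r-\frac{(R+1)^2}{2}+\frac{3(R+1)}{2}.
\]

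Collecting the boundary constants gives
\[
 -\frac{(R+1)^2}{2}+\frac{3(R+1)}{2}+1+\frac{R(R+1)}{2}=R+2,
\]
which yields \eqref{eq:A-QPB-exact}. The main obstacle is not analytic; it is endpoint bookkeeping. I would check three points carefully. First, the \(k=2R\) instance of the layer identity, via the cyclic convention, really accounts for exactly the artificial value at \(t=-1\). Second, the terminal run has length \(R+1\) and ends at \(L_r-1\). Third, no index \(t\) is counted twice at the cuts \(\theta_{r,k}\). As a sanity check I would verify the identity directly for \(r=0\) by writing out \(N_0\).
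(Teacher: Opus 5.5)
Your proposal is correct and matches the paper's proof essentially step for step. Both arguments sum the layer-area identity over \(0\le k\le 2R\), add back the artificial cyclic value \(-\mathscr D_r(-1)=-1\), append the terminal contribution \(R(R+1)/2\), and rewrite \(\sum_k\ell_k^2=2Q_r-C_{r,0}^2+2P_r\) and \(\sum_k\ell_k=2M_r-C_{r,0}\) with \(C_{r,0}=R+1\); your explicit collection of the boundary constants into \(R+2\) is the substitution the paper leaves to the reader, and the suggested \(r=0\) check does confirm the identity (both sides equal \(6\)).
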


\begin{proof}
Set \(C_{r,2R+1}=0\) and
\(\ell_{r,k}=C_{r,k}+C_{r,k+1}\).  Summing
\eqref{eq:exact-layer-area} for \(0\le k\le2R\) gives
\begin{align}
 \mathcal L_r
 &:=-\sum_{k=0}^{2R}
 \sum_{t=\theta_{r,k+1}}^{\theta_{r,k}-1}\mathscr D_r(t)
 \nonumber\\
 &=\frac12\sum_{k=0}^{2R}\ell_{r,k}^2
 -\frac32\sum_{k=0}^{2R}\ell_{r,k}
 +2\mathfrak B_r\nonumber\\
 &=Q_r+P_r+2\mathfrak B_r-3M_r
 +\frac{3C_{r,0}-C_{r,0}^2}{2}.
 \label{eq:summed-layer-area-before-tail}
\end{align}
Here we used
\[
 \sum_{k=0}^{2R}\ell_{r,k}^2
 =2Q_r-C_{r,0}^2+2P_r,
 \qquad
 \sum_{k=0}^{2R}\ell_{r,k}=2M_r-C_{r,0}.
\]
The cyclic convention \eqref{eq:cyclic-prefix-convention} means that the
left side of \eqref{eq:summed-layer-area-before-tail} contains the single
extra value \(-\mathscr D_r(-1)=-1\).  Moreover,
\[
 C_{r,0}=R+1,
 \qquad
 \theta_{r,0}=L_r-R-1.
\]
The final factor \(1^{R+1}\) in
\eqref{eq:whole-negative-word-factorization} therefore gives
\begin{equation}
 \mathscr D_r(\theta_{r,0}+j)=-R+j
 \qquad(0\le j\le R),
 \label{eq:terminal-tail-heights}
\end{equation}
and hence
\[
 -\sum_{t=\theta_{r,0}}^{L_r-1}\mathscr D_r(t)
 =\frac{R(R+1)}2.
\]
Thus \(\mathcal A_r=\mathcal L_r+1+R(R+1)/2\).  Substituting
\(C_{r,0}=R+1\) into \eqref{eq:summed-layer-area-before-tail} yields
\eqref{eq:A-QPB-exact}.  Finally,
\eqref{eq:Mr-layer-mass} follows by summing the binomial theorem over the
even roots.
\end{proof}

We can now sharpen \cref{cor:exact-negative-area} by one full inverse-rank
order.

\begin{theorem}[First Edgeworth correction to the negative-arch area]
\label{thm:first-edgeworth-negative-area}
The exact raw negative-arch mass satisfies
\begin{equation}
 \boxed{
 \mathcal A_r
 =\frac{512}{9\sqrt{2\pi}}\,
 \frac{16^r}{\sqrt r}
 \left(
 1-\frac{13}{16r}+O(r^{-2})
 \right).}
 \label{eq:A-first-edgeworth}
\end{equation}
Moreover,
\begin{equation}
 {
 Q_r
 =\frac{256}{9\sqrt{2\pi}}\,
 \frac{16^r}{\sqrt r}
 \left(
 1-\frac{121}{144r}+O(r^{-2})
 \right),}
 \label{eq:Q-first-edgeworth}
\end{equation}
and
\begin{equation}
 {
 \frac{\mathcal A_r}{Q_r}
 =2+\frac{1}{18r}+O(r^{-2}).}
 \label{eq:A-over-Q-first-edgeworth}
\end{equation}
\end{theorem}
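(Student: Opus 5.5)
The plan is to feed the three corner expansions already obtained into the exact finite identity of \cref{prop:exact-total-area-decomposition}, namely $\mathcal A_r=Q_r+P_r+2\mathfrak B_r-3M_r+R+2$. Everything is first expressed in units of $\mathscr C_R=\binom{4R}{2R}$, with $R=r+1$.

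\Cref{lem:QP-corner-asymptotics} gives $Q_r+P_r=\mathscr C_R\bigl(\tfrac{32}{9}-\tfrac{152}{81R}+O(R^{-2})\bigr)$. \Cref{prop:forest-centroid-first-order} gives $2\mathfrak B_r=\mathscr C_R\bigl(\tfrac{80}{81R}+O(R^{-2})\bigr)$. The term $-3M_r+R+2=O(4^R)$ is smaller than $\mathscr C_R/R^2\asymp 16^R R^{-5/2}$ by an exponential factor, so it is absorbed. Hence
\[
\mathcal A_r=\mathscr C_R\Bigl(\tfrac{32}{9}-\tfrac{72}{81R}+O(R^{-2})\Bigr)=\tfrac{32}{9}\mathscr C_R\Bigl(1-\tfrac{1}{4R}+O(R^{-2})\Bigr).
\]
Dividing by the $Q_r$ expansion, $\tfrac{16}{9}\mathscr C_R\bigl(1-\tfrac{5}{18R}+O(R^{-2})\bigr)$, gives $\mathcal A_r/Q_r=2\bigl(1-\tfrac1{4R}+\tfrac5{18R}\bigr)+O(R^{-2})=2+\tfrac{1}{18R}+O(R^{-2})$. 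Since $1/R=1/r+O(r^{-2})$, this is \eqref{eq:A-over-Q-first-edgeworth}.

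To convert to the $r$-normalization I will use the central binomial expansion
\[
\binom{2n}{n}=\frac{4^n}{\sqrt{\pi n}}\Bigl(1-\frac1{8n}+O(n^{-2})\Bigr)
\]
with $n=2R$. This gives $\mathscr C_R=\frac{16^R}{\sqrt{2\pi R}}\bigl(1-\tfrac{1}{16R}+O(R^{-2})\bigr)$. Next I rewrite $16^R/\sqrt R=16\cdot16^r r^{-1/2}(1+1/r)^{-1/2}=16\cdot16^r r^{-1/2}\bigl(1-\tfrac1{2r}+O(r^{-2})\bigr)$, and likewise $1/R=1/r+O(r^{-2})$. Then
\[
\mathscr C_R=\frac{16\cdot16^r}{\sqrt{2\pi r}}\Bigl(1-\frac{9}{16r}+O(r^{-2})\Bigr).
\]
Multiplying by $\tfrac{32}{9}\bigl(1-\tfrac1{4r}\bigr)$ produces the prefactor $\tfrac{512}{9\sqrt{2\pi}}$ and the correction $-\tfrac{9}{16}-\tfrac{4}{16}=-\tfrac{13}{16}$, which is \eqref{eq:A-first-edgeworth}. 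Multiplying instead by $\tfrac{16}{9}\bigl(1-\tfrac{5}{18r}\bigr)$ gives the prefactor $\tfrac{256}{9\sqrt{2\pi}}$ and the correction $-\tfrac9{16}-\tfrac5{18}=-\tfrac{81+40}{144}=-\tfrac{121}{144}$, which is \eqref{eq:Q-first-edgeworth}.

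The substantive inputs are already proved: the exact area identity, the corner lemma, and the centroid constants. So the main obstacle is bookkeeping, and I would check it against the exact identity and the tabulated numerics. The points to watch are the $1/(16R)$ Stirling correction for $\binom{4R}{2R}$, the shift from $R$ to $r$ in both $16^R/\sqrt R$ and $1/R$, and that every $O(R^{-2})$ remainder is uniform. The one genuinely delicate point is that the error terms in \cref{lem:QP-corner-asymptotics} and \cref{prop:forest-centroid-first-order} really are $O(\mathscr C_R R^{-2})$ and not merely $o(\mathscr C_R/R)$. If that needs reinforcement, I would point to the stated $(1+p+q)^4/R^2$ local errors summed against the geometric weights $4^{-p-q}$, together with the exponentially small tails.
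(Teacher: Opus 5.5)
Your proposal is correct and follows the paper's proof essentially line by line: you substitute \cref{lem:QP-corner-asymptotics} and \cref{prop:forest-centroid-first-order} into the exact identity \eqref{eq:A-QPB-exact}, discard the exponentially smaller $-3M_r+R+2$, obtain $\mathcal A_r=\frac{32}{9}\mathscr C_R\bigl(1-\frac{1}{4R}+O(R^{-2})\bigr)$, and convert to $r$ via $\mathscr C_R=\frac{16^R}{\sqrt{2\pi R}}\bigl(1-\frac{1}{16R}+O(R^{-2})\bigr)$ with $R=r+1$. Your bookkeeping is also right: the corrections $-\frac{9}{16}-\frac{4}{16}=-\frac{13}{16}$ and $-\frac{9}{16}-\frac{5}{18}=-\frac{121}{144}$ check out, as does the quotient $2+\frac{1}{18r}$.
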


\begin{proof}
Apply \cref{prop:exact-total-area-decomposition}.
The last two terms are exponentially smaller than
\(\mathscr C_R/R^2\).  Substituting
\cref{lem:QP-corner-asymptotics,prop:forest-centroid-first-order} into
\eqref{eq:A-QPB-exact} yields
\begin{equation}
 \mathcal A_r
 =\mathscr C_R
 \left(
 \frac{32}{9}-\frac{8}{9R}+O(R^{-2})
 \right)
 =\frac{32}{9}\mathscr C_R
 \left(1-\frac{1}{4R}+O(R^{-2})\right).
 \label{eq:A-central-binomial-form}
\end{equation}
Finally,
\[
 \mathscr C_R
 =\frac{16^R}{\sqrt{2\pi R}}
 \left(1-\frac{1}{16R}+O(R^{-2})\right),
 \qquad R=r+1.
\]
Converting from \(R\) to \(r\) gives
\eqref{eq:A-first-edgeworth}.  The same calculation applied to
\eqref{eq:Q-corner-expansion} gives \eqref{eq:Q-first-edgeworth}, and the
quotient gives \eqref{eq:A-over-Q-first-edgeworth}.
\end{proof}

This theorem provides a rigorous second term in the channel relevant to the
constant test function.  In particular it already produces the next
polylogarithm in an exact scale generating function.

\begin{corollary}[Two-term polylogarithmic skeleton for the flat arch mass]
\label{cor:flat-area-polylog}
Let
\begin{equation}
 \mathscr F_{\!A}(z)
 :=\sum_{r\ge1}\frac{\mathcal A_r}{16^r}z^r,
 \qquad |z|<1,
 \label{eq:flat-area-generating}
\end{equation}
and put
\[
 a_0:=\frac{512}{9\sqrt{2\pi}}.
\]
Then
\begin{equation}
 {
 \mathscr F_{\!A}(z)
 =a_0\operatorname{Li}_{1/2}(z)
 -\frac{13a_0}{16}\operatorname{Li}_{3/2}(z)
 +\mathscr E_A(z),}
 \label{eq:flat-area-two-polylogs}
\end{equation}
where the Taylor coefficients of \(\mathscr E_A\) are
\(O(r^{-5/2})\).  Consequently \(\mathscr E_A\) and its first derivative
extend continuously to \(|z|\le1\).
\end{corollary}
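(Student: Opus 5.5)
The plan is to subtract the two model polylogarithm coefficients from the exact coefficients and show that the remainder coefficients are \(O(r^{-5/2})\). The only input is \cref{thm:first-edgeworth-negative-area}, read at the level of coefficients. By definition,
\[
 \operatorname{Li}_{1/2}(z)=\sum_{r\ge1}r^{-1/2}z^r,
 \qquad
 \operatorname{Li}_{3/2}(z)=\sum_{r\ge1}r^{-3/2}z^r .
\]
Hence the \(r\)-th Taylor coefficient of \(\mathscr E_A\) is
\[
 e_r:=\frac{\mathcal A_r}{16^r}-a_0r^{-1/2}+\frac{13a_0}{16}r^{-3/2}.
\]

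The first step is to rewrite \eqref{eq:A-first-edgeworth} as
\[
 \frac{\mathcal A_r}{16^r}=a_0r^{-1/2}\Bigl(1-\frac{13}{16r}+O(r^{-2})\Bigr)
 =a_0r^{-1/2}-\frac{13a_0}{16}r^{-3/2}+O(r^{-5/2}).
\]
This gives \(e_r=O(r^{-5/2})\) for all large \(r\). For the finitely many small \(r\), each \(e_r\) is simply a finite number, so the bound holds for every \(r\ge1\) after enlarging the constant.

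The second step is the continuity claim. Since \(\sum_r|e_r|<\infty\), the series for \(\mathscr E_A\) converges absolutely and uniformly on \(|z|\le1\) by the Weierstrass M-test. This gives a continuous extension to the closed disk. The differentiated series has coefficients \(re_r=O(r^{-3/2})\), which are also absolutely summable. So the series for \(\mathscr E_A'\) converges uniformly on the closed disk too, and its sum agrees with the derivative inside the open disk. This yields continuity of \(\mathscr E_A'\) on \(|z|\le1\).

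There is no genuine obstacle here. The only point needing care is that the \(O(r^{-2})\) in \eqref{eq:A-first-edgeworth} is a genuine uniform bound for large \(r\), and not merely a formal expansion; this is exactly what \cref{thm:first-edgeworth-negative-area} asserts. One should also note that the conversion from \(R=r+1\) to \(r\) was already carried out in that theorem, so nothing further needs to be shifted.
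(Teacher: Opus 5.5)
Your proposal is correct and follows the paper's own argument: divide the first Edgeworth expansion of $\mathcal A_r$ by $16^r$, match the $r^{-1/2}$ and $r^{-3/2}$ coefficients with $\operatorname{Li}_{1/2}$ and $\operatorname{Li}_{3/2}$, and use absolute summability of the remainder coefficients and of $r e_r=O(r^{-3/2})$ to get continuity of $\mathscr E_A$ and $\mathscr E_A'$ on $|z|\le1$. Your treatment of the finitely many small $r$ and your explicit M-test for the differentiated series supply details that the paper leaves implicit.
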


\begin{proof}
Divide \eqref{eq:A-first-edgeworth} by \(16^r\) and sum termwise.  The two
explicit coefficient sequences are \(r^{-1/2}\) and \(r^{-3/2}\), giving
the two displayed polylogarithms.  The remaining coefficients are
\(O(r^{-5/2})\), whose first differentiated series is absolutely summable
on the unit circle.
\end{proof}

\section{Outlook: from one arch channel to the full Mellin remainder}
\label{sec:outlook}

The present paper reaches a natural stopping point.  For the complete
$t=-1$ slice, the order-$X\log X$ term, the continuous log-periodic
order-$X$ fluctuation, and the induced boundary resonance lattice are
explicit; for the negative-even arch channel, the next square-root layer
has a rigorous weak Gaussian limit and integrated Edgeworth corrections.
The present analysis controls the negative-even channel only.  A
continuation across \(\RePart w=0\) requires a decomposition of the complete
Mellin remainder together with uniform analytic control of the remaining
cells.  This lies beyond the scope of the present paper.

No finite Edgeworth tower can replace that resummation: the coefficient
$1547/1536$ in Appendix~A records a further exact constant test, not a
full-slice Puiseux expansion.  Accordingly, this paper makes no claim of a
full $w$-dependent continuation into $\RePart w<0$.  It supplies the exact
full-slice boundary skeleton and the rigorous Gaussian arch model against
which such a continuation theorem must be measured.

\appendix

\section{The second integrated Edgeworth coefficient}
\label{subsec:second-flat-edgeworth}

The main text stops after the first inverse-rank correction.  For completeness, we push the constant test one order further.  The exact area identity \eqref{eq:A-QPB-exact} reduces the calculation to corner expansions of hypergeometric sums and the summed forest centroid.

\begin{lemma}[Second corner coefficients]
\label{lem:second-corner-coefficients}
With $R=r+1$ and $\mathscr C_R=\binom{4R}{2R}$, one has
\begin{align}
 Q_r
 &=\mathscr C_R\left(
 \frac{16}{9}-\frac{40}{81R}+\frac{61}{243R^2}
 +O(R^{-3})\right),
 \label{eq:Q-second-corner}\\
 P_r
 &=\mathscr C_R\left(
 \frac{16}{9}-\frac{112}{81R}+\frac{349}{243R^2}
 +O(R^{-3})\right),
 \label{eq:P-second-corner}\\
 \mathfrak B_r^{\rm int}
 &=\mathscr C_R\left(
 \frac{8}{405R}+\frac{2}{6075R^2}+O(R^{-3})\right),
 \label{eq:Bint-second-corner}\\
 \mathfrak B_r^{\rm cross}
 &=\mathscr C_R\left(
 \frac{64}{135R}-\frac{1184}{2025R^2}+O(R^{-3})\right).
 \label{eq:Bcross-second-corner}
\end{align}
Consequently
\begin{equation}
 {
 \mathfrak B_r
 =\mathscr C_R\left(
 \frac{40}{81R}-\frac{142}{243R^2}+O(R^{-3})\right).}
 \label{eq:B-second-corner}
\end{equation}
\end{lemma}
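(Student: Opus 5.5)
The plan is to push each of the four corner expansions used in \cref{lem:QP-corner-asymptotics}, \cref{lem:summed-single-root-centroid} and \cref{prop:forest-centroid-first-order} one order further in $1/R$. Every quantity is a finite sum of hypergeometric terms that is exponentially concentrated at the corner $(i,j)=(R,R)$. After the substitution $i=R-p$, $j=R-q$, each term divided by $\mathscr C_R$ equals $4^{-p-q}$ times an expansion in $1/R$ whose coefficients are polynomials in $(p,q)$. Summing these coefficients against geometric weights is a finite computation.

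\textbf{Step 1: $Q_r$ and $P_r$.} Start from the Vandermonde forms
\[
Q_r=\sum_{i,j\le R}\binom{2i+2j}{2i},\qquad P_r=\sum_{i,j\le R}\binom{2i+2j}{2i+1}.
\]
Write each ratio to $\mathscr C_R$ as a product of $2p+2q$ linear factors of the form $(2R-c)/(4R-c')$. Expand the logarithm of this product to order $R^{-2}$; this gives $1+\alpha_{p,q}/R+\gamma_{p,q}/R^2+O((1+p+q)^6R^{-3})$, where $\gamma_{p,q}$ is a polynomial of degree $4$. The analogous expansion for $P_r$ has coefficients $\beta_{p,q}$ and $\delta_{p,q}$. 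The region $p+q>R^{1/3}$ is exponentially negligible, so the expansion can be summed termwise. The resulting sums $\sum 4^{-p-q}\gamma_{p,q}$ and $\sum 4^{-p-q}\delta_{p,q}$ reduce to moments $\sum_{p\ge0}p^m4^{-p}$ with $m\le4$. Carrying this out should give $61/243$ and $349/243$.

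\textbf{Step 2: $\mathfrak B_r^{\rm cross}$.} Use the exact form \eqref{eq:B-cross-exact}. Refine the expansion of $(2i+2j+1)!/((2i+1)!(2j+1)!)$ relative to $\mathscr C_R/R$ by one further order. Note that this ratio carries an extra prefactor, essentially $4R/((2i+1)(2j+1))\cdot(4R+\dots)$, whose $1/R$ correction must also be kept. Then sum against the weight $(p-q)4^{-p-q}$ over $p>q\ge0$.

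\textbf{Step 3: $\mathfrak B_r^{\rm int}$.} This is the step I expect to be the main obstacle. It requires \eqref{eq:betaL-asymptotic} to second order, namely $\beta_L=(-1)^L\binom{2L}{L}L^{-1}\bigl(\tfrac1{27}+c/L+O(L^{-2})\bigr)$ with $c$ explicit. I would proceed in two sub-steps.
\begin{enumerate}[label=(\alph*)]
\item Expand $e_L$ from \eqref{eq:eL-exact} to second order. This uses the same corner substitution $a=L-1-p$, $b=L-1-q$ with the alternating weight $(-1)^p(p-q)$. The series converge absolutely because the weight is $2^{-p-q}$.
\item Feed the two-term expansion of $e_L$ into $\beta_L=e_L+\sum_{j\ge1}2^{j-1}e_{L-j}$. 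The ratio $e_{L-j}/e_L=(-4)^{-j}(1+\kappa_j/L+\dots)$ requires the first-order correction $\kappa_j$, which is linear in $j$. That correction comes from both $\binom{2L-2j}{L-j}/\binom{2L}{L}$ and the $1/(L-j)$ prefactor.
\end{enumerate}
Once $\beta_{2i}$ is known to second order, sum over the even roots $2i=2R-2j$. The ratio $\binom{4R-4j}{2R-2j}/\mathscr C_R=16^{-j}(1+O(j/R))$, together with $1/(2R-2j)$, again contributes first-order corrections that are linear in $j$. These must be combined carefully, and the target is $2/6075$.

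\textbf{Step 4: consolidation.} Add the second-order coefficients of Steps 2 and 3: $2/6075-1184/2025=-3550/6075=-142/243$, which is \eqref{eq:B-second-corner}. Each error term is $O(R^{-3})$, because every fourth-order remainder is a polynomial moment under a geometric weight. As an internal consistency check, I would verify that the first-order coefficients reproduce $-40/81$, $-112/81$, $8/405$ and $64/135$ from the main text.
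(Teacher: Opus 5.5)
Your plan is correct and follows the paper's proof essentially step for step. The ingredients match: logarithmic product expansions of the corner ratios for $Q_r,P_r$, summed against quarter-geometric moments; a one-order refinement of the cross ratio; and the chain $e_L\to\beta_L$ through the renewal sum $\beta_L=e_L+\sum_{j\ge1}2^{j-1}e_{L-j}$, followed by the $16^{-p}$ corner sum over the even roots. You rightly observe that only $b_0=1/27$ and $b_1=-1/162$ enter this lemma, whereas the paper also computes $E_2$ and $b_2$, which are not needed here.

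Two small points. Your description of the cross-term prefactor is garbled: relative to $\binom{4R-2s}{2R-2p}$ it is exactly $(2i+2j+1)/((2i+1)(2j+1))=R^{-1}\bigl(1+(s/2-3/4)/R+O((1+s)^2R^{-2})\bigr)$ with $s=p+q$. You should also state the uniform bounds $|e_n|\ll\binom{2n}{n}/n$ and $|\beta_{2i}|\ll\binom{4i}{2i}/i$, which justify truncating the renewal and root sums, as the paper does.
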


\begin{proof}
Put \(s=p+q\), \(d=p-q\), and let
\(\mathscr C_R=\binom{4R}{2R}\).  The exact product representation for the
first corner ratio is
\begin{equation}
 \frac{\binom{4R-2s}{2R-2p}}{\mathscr C_R}
 =4^{-s}
 \frac{
   \displaystyle\prod_{a=0}^{2p-1}\left(1-\frac{a}{2R}\right)
   \displaystyle\prod_{b=0}^{2q-1}\left(1-\frac{b}{2R}\right)}
  {\displaystyle\prod_{c=0}^{2s-1}\left(1-\frac{c}{4R}\right)}.
 \label{eq:Q-corner-product}
\end{equation}
Writing
\(S_2(m)=\sum_{j=0}^{m-1}j^2=m(m-1)(2m-1)/6\), logarithmic expansion of
\eqref{eq:Q-corner-product} gives
\begin{equation}
 \log\!\left(
 4^s\frac{\binom{4R-2s}{2R-2p}}{\mathscr C_R}\right)
 =\frac{\alpha_{p,q}}R+\frac{\lambda_{p,q}}{R^2}
 +O\!\left(\frac{(1+s)^4}{R^3}\right),
 \label{eq:Q-corner-log-second}
\end{equation}
where
\begin{equation}
 \alpha_{p,q}=-\frac{d^2}{2}+\frac{s}{4},
 \qquad
 \lambda_{p,q}
 =-\frac{S_2(2p)+S_2(2q)}8+\frac{S_2(2s)}{32}
 =-\frac{(2s-1)(4d^2-s)}{32}.
 \label{eq:alpha-lambda-corner}
\end{equation}
Exponentiating yields
\begin{equation}
 \frac{\binom{4R-2s}{2R-2p}}{\mathscr C_R}
 =4^{-s}\left(
 1+\frac{\alpha_{p,q}}R
 +\frac{\alpha^{(2)}_{p,q}}{R^2}
 +O\!\left(\frac{(1+s)^6}{R^3}\right)\right),
 \label{eq:Q-corner-second-local}
\end{equation}
with the explicit polynomial
\begin{equation}
 {
 \alpha^{(2)}_{p,q}
 =\frac{d^4}{8}-\frac{3sd^2}{8}
 +\frac{d^2}{8}+\frac{3s^2}{32}-\frac{s}{32}.}
 \label{eq:alpha-second-polynomial}
\end{equation}

The second corner ratio differs from the first by the exact factor
\begin{equation}
 \frac{\binom{4R-2s}{2R-2p+1}}
      {\binom{4R-2s}{2R-2p}}
 =\frac{2R-2q}{2R-2p+1}
 =1+\frac{d-1/2}{R}
 +\frac{(1/2-p)(q+1/2-p)}{R^2}
 +O\!\left(\frac{(1+s)^3}{R^3}\right).
 \label{eq:P-over-Q-local-factor}
\end{equation}
Consequently
\begin{equation}
 \frac{\binom{4R-2s}{2R-2p+1}}{\mathscr C_R}
 =4^{-s}\left(
 1+\frac{\beta_{p,q}}R
 +\frac{\beta^{(2)}_{p,q}}{R^2}
 +O\!\left(\frac{(1+s)^6}{R^3}\right)\right),
 \label{eq:P-corner-second-local}
\end{equation}
where
\begin{equation}
 \beta_{p,q}=\alpha_{p,q}+d-\frac12,
 \label{eq:beta-first-polynomial}
\end{equation}
and
\begin{equation}
 {
 \beta^{(2)}_{p,q}
 =\alpha^{(2)}_{p,q}
 +\alpha_{p,q}\left(d-\frac12\right)
 +\left(\frac12-p\right)\left(q+\frac12-p\right).}
 \label{eq:beta-second-polynomial}
\end{equation}

We now evaluate the two polynomially weighted geometric sums explicitly.
Put
\begin{equation}
 \mu_j:=\sum_{n\ge0}\frac{n^j}{4^n}
 =\left.
 \left(x\frac{d}{dx}\right)^j\frac1{1-x}
 \right|_{x=1/4}.
 \label{eq:quarter-geometric-moments}
\end{equation}
The required values are
\begin{equation}
 \mu_0=\frac43,\quad \mu_1=\frac49,\quad
 \mu_2=\frac{20}{27},\quad \mu_3=\frac{44}{27},\quad
 \mu_4=\frac{380}{81}.
 \label{eq:quarter-moment-values}
\end{equation}
Expanding \eqref{eq:alpha-second-polynomial} in \(p,q\) and collecting
monomials gives
\begin{align}
 \sum_{p,q\ge0}4^{-p-q}\alpha^{(2)}_{p,q}
 ={}&
 \frac14\mu_4\mu_0-\mu_3\mu_1-\frac34\mu_3\mu_0
 +\frac34\mu_2^2+\frac34\mu_2\mu_1\nonumber\\
 &+\frac7{16}\mu_2\mu_0-\frac1{16}\mu_1^2
 -\frac1{16}\mu_1\mu_0
 =\frac{61}{243}.
 \label{eq:alpha-second-geometric-sum}
\end{align}
Likewise \eqref{eq:beta-second-polynomial} gives
\begin{align}
 \sum_{p,q\ge0}4^{-p-q}\beta^{(2)}_{p,q}
 ={}&
 \frac14\mu_4\mu_0-\mu_3\mu_1-\frac34\mu_3\mu_0
 +\frac34\mu_2^2+\frac34\mu_2\mu_1\nonumber\\
 &+\frac{31}{16}\mu_2\mu_0-\frac{25}{16}\mu_1^2
 -\frac{13}{16}\mu_1\mu_0+\frac14\mu_0^2
 =\frac{349}{243}.
 \label{eq:beta-second-geometric-sum}
\end{align}
Equations \eqref{eq:alpha-second-geometric-sum} and
\eqref{eq:beta-second-geometric-sum} prove
\eqref{eq:Q-second-corner}--\eqref{eq:P-second-corner}.

\medskip
\noindent\emph{Internal-centroid contribution.}
We now supply the corner calculation underlying
\eqref{eq:Bint-second-corner}.  Write
\(\mathscr D_L:=\binom{2L}{L}\).  In the exact sum
\eqref{eq:eL-exact}, make the change of variables
\[
 a=L-1-p,\qquad b=L-1-q.
\]
Thus \(0\le q<p\le L-1\), \(d:=p-q\ge1\), \(s:=p+q\), and
\[
 (-1)^a=(-1)^{L-1}(-1)^p,\qquad b-a=d.
\]
The factorial quotient has the exact product form
\begin{align}
 \frac{(a+b+1)!}{(a+1)!(b+1)!\mathscr D_L}
 &=\frac{(2L-1-s)!}{(L-p)!(L-q)!\mathscr D_L}\nonumber\\
 &=\frac{2^{-s-1}}{L}
 \frac{\displaystyle
   \prod_{u=0}^{p-1}\left(1-\frac{u}{L}\right)
   \prod_{v=0}^{q-1}\left(1-\frac{v}{L}\right)}
 {\displaystyle
   \prod_{h=0}^{s}\left(1-\frac{h}{2L}\right)}.
 \label{eq:eL-corner-product}
\end{align}
Logarithmic expansion gives
\begin{equation}
 \frac{(a+b+1)!}{(a+1)!(b+1)!\mathscr D_L}
 =\frac{2^{-s-1}}{L}
 \left(
 1+\frac{A_{p,q}}{L}+\frac{B_{p,q}}{L^2}
 +O\!\left(\frac{(1+s)^6}{L^3}\right)\right),
 \label{eq:eL-corner-expansion}
\end{equation}
uniformly, for example, on \(s\le L^{1/8}\).  The two corner
polynomials are
\begin{align}
 A_{p,q}
 &=-\sum_{u=0}^{p-1}u-\sum_{v=0}^{q-1}v
 +\frac12\sum_{h=0}^{s}h
 =\frac{3s-d^2}{4},
 \label{eq:eL-first-corner-polynomial}\\
 B_{p,q}
 &=\frac12A_{p,q}^2
 -\frac12\sum_{u=0}^{p-1}u^2
 -\frac12\sum_{v=0}^{q-1}v^2
 +\frac18\sum_{h=0}^{s}h^2\nonumber\\
 &=\frac{d^4-10d^2s+4d^2+15s^2-2s}{32}.
 \label{eq:eL-second-corner-polynomial}
\end{align}
Since \(p=q+d\), these become
\begin{align}
 A_{q+d,q}
 &=-\frac{d^2}{4}+\frac{3d}{4}+\frac{3q}{2},
 \label{eq:eL-A-qd}\\
 B_{q+d,q}
 &=\frac{d^4}{32}-\frac{5d^3}{16}
 -\frac{5d^2q}{8}+\frac{19d^2}{32}
 +\frac{15dq}{8}-\frac{d}{16}
 +\frac{15q^2}{8}-\frac{q}{8}.
 \label{eq:eL-B-qd}
\end{align}

Substitution into \eqref{eq:eL-exact} gives
\begin{equation}
 e_L=(-1)^L\frac{\mathscr D_L}{L}
 \left(E_0+\frac{E_1}{L}+\frac{E_2}{L^2}+O(L^{-3})\right),
 \label{eq:eL-three-coefficients-pre}
\end{equation}
where, for \(F=1,A,B\), the relevant linear functional is
\begin{equation}
 \mathcal T[F]
 :=-\frac14\sum_{q\ge0}\sum_{d\ge1}
 d\left(-\frac14\right)^q
 \left(-\frac12\right)^dF(q+d,q),
 \qquad
 (E_0,E_1,E_2)=(\mathcal T[1],\mathcal T[A],\mathcal T[B]).
 \label{eq:eL-corner-functional}
\end{equation}
Put
\[
 \mathsf Q_m:=\sum_{q\ge0}q^m\left(-\frac14\right)^q,
 \qquad
 \mathsf D_m:=\sum_{d\ge1}d^m\left(-\frac12\right)^d.
\]
The needed moments are
\begin{align*}
 \mathsf Q_0&=\frac45,&
 \mathsf Q_1&=-\frac4{25},&
 \mathsf Q_2&=-\frac{12}{125},\\
 \mathsf D_1&=-\frac29,&
 \mathsf D_2&=-\frac2{27},&
 \mathsf D_3&=\frac2{27},&
 \mathsf D_4&=\frac{10}{81},&
 \mathsf D_5&=-\frac{14}{243}.
\end{align*}
Consequently
\[
 E_0=-\frac14\mathsf Q_0\mathsf D_1=\frac2{45},
\]
and, since
\(dA_{q+d,q}=-d^3/4+3d^2/4+3dq/2\),
\begin{equation}
 E_1=-\frac14\left(
 -\frac14\mathsf Q_0\mathsf D_3
 +\frac34\mathsf Q_0\mathsf D_2
 +\frac32\mathsf Q_1\mathsf D_1\right)
 =\frac1{675}.
 \label{eq:eL-E1-evaluation}
\end{equation}
Likewise,
\begin{align*}
 dB_{q+d,q}={}&
 \frac{d^5}{32}-\frac{5d^4}{16}
 -\frac{5d^3q}{8}+\frac{19d^3}{32}
 +\frac{15d^2q}{8}-\frac{d^2}{16}\\
 &+\frac{15dq^2}{8}-\frac{dq}{8},
\end{align*}
so
\begin{align}
 E_2=-\frac14\bigg(&
 \frac1{32}\mathsf Q_0\mathsf D_5
 -\frac5{16}\mathsf Q_0\mathsf D_4
 -\frac58\mathsf Q_1\mathsf D_3
 +\frac{19}{32}\mathsf Q_0\mathsf D_3\nonumber\\
 &+\frac{15}{8}\mathsf Q_1\mathsf D_2
 -\frac1{16}\mathsf Q_0\mathsf D_2
 +\frac{15}{8}\mathsf Q_2\mathsf D_1
 -\frac18\mathsf Q_1\mathsf D_1\bigg)
 =-\frac{109}{6075}.
 \label{eq:eL-E2-evaluation}
\end{align}
Thus
\begin{equation}
 {
 e_L=(-1)^L\frac1L\binom{2L}{L}
 \left(
 \frac2{45}+\frac{1}{675L}
 -\frac{109}{6075L^2}+O(L^{-3})\right).}
 \label{eq:eL-second}
\end{equation}

We next solve the exact renewal relation
\[
 \beta_L=e_L+\sum_{j=1}^{L-1}2^{j-1}e_{L-j}.
\]
Set \(G_L:=(-1)^LL^{-1}\binom{2L}{L}\).  For fixed \(j\), a
factorial-ratio expansion gives
\begin{equation}
 \frac{G_{L-j}}{G_L}
 =(-1)^j4^{-j}
 \left(1+\frac{3j}{2L}
 +\frac{j(15j-1)}{8L^2}
 +O\!\left(\frac{(1+j)^6}{L^3}\right)\right).
 \label{eq:renewal-central-ratio}
\end{equation}
Indeed,
\[
 \frac{\binom{2L-2j}{L-j}}{\binom{2L}{L}}
 =4^{-j}\left(1+\frac{j}{2L}
 +\frac{j(3j-1)}{8L^2}
 +O\!\left(\frac{(1+j)^6}{L^3}\right)\right),
\]
and multiplication by \(L/(L-j)\) gives
\eqref{eq:renewal-central-ratio}.

Write
\[
 c_0=\frac2{45},\qquad c_1=\frac1{675},\qquad
 c_2=-\frac{109}{6075},
\]
and put \(\rho_0:=1\),
\(\rho_j:=(-1)^j2^{-j-1}\) for \(j\ge1\).  Then
\begin{equation}
 W_0:=\sum_{j\ge0}\rho_j=\frac56,
 \qquad
 W_1:=\sum_{j\ge1}j\rho_j=-\frac19,
 \qquad
 W_2:=\sum_{j\ge1}j^2\rho_j=-\frac1{27}.
 \label{eq:renewal-weight-moments}
\end{equation}
Using \((L-j)^{-1}=L^{-1}+jL^{-2}+O((1+j)^2L^{-3})\)
together with \eqref{eq:renewal-central-ratio} gives
\[
 \beta_L=G_L\left(b_0+\frac{b_1}{L}+\frac{b_2}{L^2}
 +O(L^{-3})\right),
\]
where
\begin{align*}
 b_0&=c_0W_0=\frac1{27},\\
 b_1&=c_1W_0+\frac32c_0W_1=-\frac1{162},\\
 b_2&=c_2W_0+\frac52c_1W_1
 +\frac{c_0}{8}(15W_2-W_1)=-\frac{13}{729}.
\end{align*}
Therefore
\begin{equation}
 {
 \beta_L=(-1)^L\frac1L\binom{2L}{L}
 \left(
 \frac1{27}-\frac1{162L}-\frac{13}{729L^2}
 +O(L^{-3})\right).}
 \label{eq:betaL-second}
\end{equation}

Finally, write \(i=R-p\) in
\(\mathfrak B_r^{\rm int}=\sum_{i=0}^{R}\beta_{2i}\).  Uniformly on a
fixed corner window,
\[
 \frac{\binom{4(R-p)}{2(R-p)}}{\mathscr C_R}
 =16^{-p}\left(1+\frac{p}{2R}
 +O\!\left(\frac{(1+p)^2}{R^2}\right)\right).
\]
With \(b_0=1/27\), \(b_1=-1/162\), this yields
\[
 \frac{\beta_{2(R-p)}}{\mathscr C_R}
 =16^{-p}\left[
 \frac{b_0}{2R}+\frac{3pb_0+b_1}{4R^2}
 +O\!\left(\frac{(1+p)^2}{R^3}\right)\right].
\]
Since
\[
 \sum_{p\ge0}16^{-p}=\frac{16}{15},
 \qquad
 \sum_{p\ge0}p16^{-p}=\frac{16}{225},
\]
we obtain
\begin{align*}
 \frac{\mathfrak B_r^{\rm int}}{\mathscr C_R}
 &=\frac{b_0}{2R}\frac{16}{15}
 +\frac1{4R^2}\left(
 3b_0\frac{16}{225}+b_1\frac{16}{15}\right)
 +O(R^{-3})\\
 &=\frac{8}{405R}+\frac{2}{6075R^2}+O(R^{-3}),
\end{align*}
which is \eqref{eq:Bint-second-corner}.

All termwise sums in this internal-centroid calculation are uniform.  In
\eqref{eq:eL-corner-product}, Taylor expansion on
\(s\le L^{1/8}\) has the summable error displayed in
\eqref{eq:eL-corner-expansion}.  On the complement, the standard uniform
binomial estimate and \(\mathscr D_L\asymp4^L/\sqrt L\) reduce the tail,
relative to \(\mathscr D_L/L\), to a polynomially weighted geometric
tail \(\sum_{s>L^{1/8}}s^2 2^{-s}\).  Similarly,
\(|e_n|\ll\binom{2n}{n}/n\), so the range \(j>L^{1/8}\) in the renewal
sum is geometrically small relative to \(G_L\).  Finally,
\(|\beta_{2i}|\ll\binom{4i}{2i}/i\), which gives the same conclusion for
the last \(p\)-corner.  This justifies every expansion through the stated
order.

For the cross term, the factorial ratio in \eqref{eq:B-cross-exact}, with
\(i=R-p\), \(j=R-q\), factors as
\begin{align}
 &\frac{(4R-2p-2q+1)!}
 {(2R-2p+1)!(2R-2q+1)!\mathscr C_R}\nonumber\\
 &\qquad=
 \frac{4^{-p-q}}R\left(
 1+\frac{\gamma_{p,q}}R
 +O\!\left(\frac{(1+p+q)^4}{R^2}\right)\right),
 \label{eq:cross-corner-local-second}
\end{align}
where
\begin{equation}
 {
 \gamma_{p,q}
 =-\frac{p^2+q^2}{2}+pq+\frac{3(p+q)}4-\frac34.}
 \label{eq:gamma-cross-polynomial}
\end{equation}
Indeed, divide the first corner ratio
\eqref{eq:Q-corner-second-local} by the two missing linear factors; the
additional relative coefficient is \((p+q)/2-3/4\).

To sum \eqref{eq:gamma-cross-polynomial}, write \(p=q+d\), \(d\ge1\),
and set \(x=1/4\).  Then
\[
 \gamma_{q+d,q}
 =-\frac{d^2}{2}+\frac{3q}{2}+\frac{3d}{4}-\frac34.
\]
Using \eqref{eq:quarter-geometric-moments} and elementary geometric
summation gives
\begin{equation}
 \sum_{p>q\ge0}(p-q)4^{-p-q}
 =\frac{\mu_1}{1-x^2}=\frac{64}{135}.
 \label{eq:cross-leading-geometric-sum}
\end{equation}
\begin{align}
 \sum_{p>q\ge0}(p-q)4^{-p-q}\gamma_{p,q}
 &=\frac1{1-x^2}
 \left(-\frac12\mu_3+\frac34\mu_2-\frac34\mu_1\right)
 +\frac32\frac{x^2}{(1-x^2)^2}\mu_1\nonumber\\
 &=-\frac{1184}{2025}.
 \label{eq:cross-second-geometric-sum}
\end{align}
This proves \eqref{eq:Bcross-second-corner}; addition gives
\eqref{eq:B-second-corner}.

For completeness, all termwise corner summations above are uniform.  On
the window \(p+q\le R^{1/8}\), Taylor's formula gives the displayed
remainders.  Outside that window,
\[
 \frac{\binom{4R-2p-2q}{\,\cdot\,}}{\mathscr C_R}
 \ll \sqrt R\,4^{-p-q}
\]
by \(\binom Nm\le2^N\) and
\(\mathscr C_R\gg16^R/\sqrt R\); the cross ratio is bounded in the same
way.  Hence the complementary tails are smaller than every fixed inverse
power of \(R\), and the polynomially weighted geometric sums of the local
remainders converge.  This justifies expansion and summation through the
asserted orders.
\end{proof}

\begin{theorem}[Second integrated Edgeworth coefficient]
\label{thm:second-integrated-edgeworth}
The complete raw negative-arch mass has the three-term expansion
\begin{equation}
 \boxed{
 \mathcal A_r
 =\frac{512}{9\sqrt{2\pi}}\,
 \frac{16^r}{\sqrt r}
 \left(
 1-\frac{13}{16r}+\frac{1547}{1536r^2}+O(r^{-3})
 \right).}
 \label{eq:A-second-edgeworth}
\end{equation}
Hence, with $a_0=512/(9\sqrt{2\pi})$,
\begin{equation}
 {
 \mathscr F_{\!A}(z)
 =a_0\operatorname{Li}_{1/2}(z)
 -\frac{13a_0}{16}\operatorname{Li}_{3/2}(z)
 +\frac{1547a_0}{1536}\operatorname{Li}_{5/2}(z)
 +\mathscr E_{A,3}(z),}
 \label{eq:flat-three-polylogs}
\end{equation}
where $[z^r]\mathscr E_{A,3}(z)=O(r^{-7/2})$.  In particular
$\mathscr E_{A,3}$ has two continuous derivatives on $|z|\le1$.
\end{theorem}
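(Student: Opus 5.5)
The plan is to repeat the proof of \cref{thm:first-edgeworth-negative-area} one order further. All the analytic work has already been done in \cref{lem:second-corner-coefficients}, so what remains is exact bookkeeping with rational coefficients. The starting point is the finite identity \eqref{eq:A-QPB-exact},
\[
 \mathcal A_r=Q_r+P_r+2\mathfrak B_r-3M_r+R+2 .
\]
Here \(M_r\asymp4^R\) and \(R+2\) are exponentially smaller than \(\mathscr C_R R^{-3}\asymp16^R R^{-7/2}\), so they can be discarded. Substituting \eqref{eq:Q-second-corner}, \eqref{eq:P-second-corner} and \eqref{eq:B-second-corner} gives \(\mathcal A_r/\mathscr C_R\) term by term.

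\begin{itemize}[leftmargin=*]
 \item The constant term is \(32/9\).
 \item The \(R^{-1}\) coefficient is \((-40-112+80)/81=-8/9\). This reproduces \eqref{eq:A-central-binomial-form} and serves as a check.
 \item The \(R^{-2}\) coefficient is \((61+349-284)/243=14/27\).
\end{itemize}
Hence
\[
 \mathcal A_r=\frac{32}{9}\mathscr C_R\Bigl(1-\frac1{4R}+\frac{7}{48R^2}+O(R^{-3})\Bigr).
\]

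Next I would use the refined central binomial expansion. Stirling's series gives \(\binom{2n}{n}=\frac{4^n}{\sqrt{\pi n}}\bigl(1-\frac1{8n}+\frac1{128n^2}+O(n^{-3})\bigr)\). With \(n=2R\) this becomes
\[
 \mathscr C_R=\frac{16^R}{\sqrt{2\pi R}}\Bigl(1-\frac1{16R}+\frac1{512R^2}+O(R^{-3})\Bigr).
\]
Multiplying the two bracketed series gives \(\mathcal A_r\) in the form \(\frac{32}{9\sqrt{2\pi}}\frac{16^R}{\sqrt R}(1+\kappa_1R^{-1}+\kappa_2R^{-2}+O(R^{-3}))\).

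The last step converts from \(R=r+1\) to \(r\). I would write \(16^R/\sqrt R=16\cdot16^r r^{-1/2}(1+1/r)^{-1/2}\) and \(R^{-j}=r^{-j}(1+1/r)^{-j}\), then expand everything in powers of \(1/r\) to second order. This gives the prefactor \(512/(9\sqrt{2\pi})\), recovers the known first coefficient \(-13/16\), and produces the new coefficient, which should equal \(1547/1536\).

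This final expansion is where I expect the only real risk. It is purely arithmetic, but two products of second-order series are involved, and a single slip in a rational changes the answer. I would therefore cross-check it independently, for instance by expanding \(\log\) of each factor and exponentiating, and by checking numerically against the table of \(\sqrt r\,\mathcal A_r/16^r\) given after \cref{cor:exact-negative-area}.

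For the polylogarithmic statement \eqref{eq:flat-three-polylogs}, I would argue exactly as in \cref{cor:flat-area-polylog}. Dividing \eqref{eq:A-second-edgeworth} by \(16^r\) gives coefficient sequences \(a_0r^{-1/2}\), \(-\frac{13a_0}{16}r^{-3/2}\) and \(\frac{1547a_0}{1536}r^{-5/2}\), plus a remainder \(O(r^{-7/2})\). The first three sum to \(\operatorname{Li}_{1/2}\), \(\operatorname{Li}_{3/2}\) and \(\operatorname{Li}_{5/2}\) with the displayed constants. For the remainder, differentiating twice termwise gives coefficients \(O(r^2\cdot r^{-7/2})=O(r^{-3/2})\), which are absolutely summable on \(|z|\le1\). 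By uniform convergence, \(\mathscr E_{A,3}\) and its first two derivatives therefore extend continuously to the closed disc.
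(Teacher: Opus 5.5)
Your proposal is correct and is essentially the paper's own proof: you substitute the second-order corner coefficients into the exact identity \eqref{eq:A-QPB-exact} to get $\mathscr C_R\bigl(\tfrac{32}{9}-\tfrac{8}{9R}+\tfrac{14}{27R^2}+O(R^{-3})\bigr)$, multiply by the Stirling expansion of $\mathscr C_R$, shift $R=r+1$, and sum termwise to obtain the polylogarithms. The arithmetic you flagged as the only risk does close: the bracket in $R$ is $1-\frac{5}{16R}+\frac{251}{1536R^2}$, and re-expanding in $r$ gives $-\frac{13}{16}$ and then $\frac{731}{1536}+\frac{576}{1536}+\frac{240}{1536}=\frac{1547}{1536}$.
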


\begin{proof}
Insert \cref{lem:second-corner-coefficients} in the exact identity
\eqref{eq:A-QPB-exact}.  The exponentially smaller terms $M_r$ and $R+2$
do not enter any inverse power of $R$.  One obtains
\[
 \mathcal A_r
 =\mathscr C_R\left(
 \frac{32}{9}-\frac{8}{9R}+\frac{14}{27R^2}
 +O(R^{-3})\right).
\]
Using
\[
 \mathscr C_R
 =\frac{16^R}{\sqrt{2\pi R}}
 \left(1-\frac1{16R}+\frac1{512R^2}+O(R^{-3})\right)
\]
and then $R=r+1$ gives \eqref{eq:A-second-edgeworth}; the relative
second coefficient is exactly $1547/1536$.  Termwise summation gives
\eqref{eq:flat-three-polylogs}.
\end{proof}

\section*{Reproducibility statement}

A frozen reproducibility snapshot corresponding to arXiv v1, including
verification scripts, reference outputs, and figures, is archived at Zenodo,
DOI:
\href{https://doi.org/10.5281/zenodo.22250518}
{10.5281/zenodo.22250518}.  The scripts verify the log-periodic Fourier
coefficients, the Gaussian area identity, and the first two integrated
Edgeworth calculations.  Numerical experiments serve only as checks of
formulas proved in the text.  They are not used to infer any theorem.

\section*{Acknowledgments}

The author thanks Beno\^{\i}t Clo\^{\i}tre for his structural work on the
perturbed recursion, in particular the global well-definedness theorem and
the binary arch-and-forest framework on which this paper builds.

\section*{AI-Disclosure}

During the preparation of this manuscript, the author used OpenAI's
ChatGPT for help in language editing, LaTeX restructuring, and the
preparation of verification scripts.  All mathematical statements, proofs,
computations, references, and the final presentation were independently
verified by the author, who assumes full responsibility for the content.

\end{document}